\documentclass{article}

\usepackage[utf8]{inputenc} 
\usepackage{amsfonts}
\usepackage{amsmath}
\usepackage{amsthm} 
\usepackage{amssymb}
\usepackage{color}
\usepackage{graphicx}
\usepackage{setspace}
\usepackage{enumerate}
\usepackage{booktabs}
\usepackage{subfigure}
\usepackage{pstricks}
\usepackage{url}
\usepackage{hyperref}
\usepackage{mathtools}
\usepackage{enumitem}
\usepackage{cancel}
\usepackage[ruled, boxed, noend, noline, linesnumbered, norelsize]{algorithm2e}
\usepackage[normalem]{ulem}
\usepackage{multicol}
\hypersetup{
	colorlinks,
	linkcolor={red!50!black},
	citecolor={blue!50!black},
	urlcolor={blue!80!black}
}
\newcommand{\doubly}[1]{{\mathcal{D}^{#1}}}	
\newcommand{\distP}{\ell _{\text{poly}}}  
\newcommand{\minFace}{\mathcal{F}_{\min}}

\newcommand{\norm}[1]{\|#1\|}

\newcommand{\reInt}{\mathrm{ri}\,}
\newcommand{\reCone}{\mathrm{rec}\,}
\newcommand{\lineality}{\mathrm{lin}\,}

\newcommand{\closure}{\mathrm{cl}\,}

\newcommand{\spanVec}{\mathrm{span}\,}
\newcommand{\inProd}[2]{\langle #1 , #2 \rangle }

\newcommand{\PSDcone}[1]{{\mathcal{S}^{#1}_+}}

\newcommand{\stdMap}{ {\mathcal{A}}}

\newcommand{\stdCone}{ {\mathcal{K}}}
\newcommand{\stdSpace}{ \mathcal{L}}

\newcommand{\stdFace}{ \mathcal{F}}

\newcommand{\stdInt}{ {e}}

\newcommand{\matRank}{{\mathrm{ rank } \,}}

\newcommand{\T}{\top\hspace{-1pt}}               

\newcommand{\jAlg}{\mathcal{E}}
\renewcommand{\Re}{\mathbb{R}}

\newcommand{\SOC}[2]{{\mathcal{L}^{#2} _{#1}}}
\newcommand{\jProd}[2]{ {#1 \circ #2 } }
\newcommand{\eig}[2]{ \lambda_{#2}^{#1}}

\newcommand{\dist}{ {\mathrm{dist}\,}}
\newcommand{\eigJ}{\lambda}
\newcommand{\comp}{\diamondsuit}

\newcommand{\dpp}{d_{\text{PPS}}}
\newcommand{\ds}{d_{\text{S}}}
\newcommand{\FRF}{facial residual function}
\newcommand{\FRFs}{facial residual functions}
\newcommand{\FRFa}{FRF}
\newcommand{\face}{\mathrel{\unlhd}}
\newcommand{\stdProj}{\mathcal{P}}

\DeclarePairedDelimiter\abs{\lvert}{\rvert}%

\renewcommand{\S}{\mathcal{S}}                    
\newcommand{\tr}{\mathrm{tr}\,}    
\hoffset=-2.2cm
\voffset=-1.5cm
\textwidth=165mm
\textheight=220mm

\newcommand{\RNum}[1]{\uppercase\expandafter{\romannumeral #1\relax}}
\newtheorem{definition}{Definition}
\newtheorem{lemma}[definition]{Lemma}
\newtheorem{proposition}[definition]{Proposition}
\newtheorem{example}[definition]{Example}
\newtheorem{corollary}[definition]{Corollary}

\newtheorem{theorem}[definition]{Theorem}
\newtheorem*{theorem*}{Theorem}{\bf}{\it}
\newtheorem{assumption}{Assumption}

\newtheorem{remark}[definition]{Remark}

\title{Amenable cones: error bounds without constraint qualifications}

\author{Bruno F. Louren\c{c}o
	\thanks{Department of Mathematical Informatics, Graduate School of Information Science \& Technology, University of Tokyo, 7-3-1 Hongo, Bunkyo-ku, Tokyo 113-8656, Japan.
		(Email: lourenco@mist.i.u-tokyo.ac.jp)}
}
\date{December 2017 (Revised: August 2019)}

\begin{document}

	\maketitle

\begin{abstract}
We provide a framework for obtaining error bounds for linear conic problems without assuming constraint qualifications or regularity conditions.
The key aspects of our approach are the  notions of \emph{amenable cones} and \emph{\FRFs}.  For \emph{amenable cones}, it is shown that error bounds can be expressed as a composition of \FRFs. 
The number of compositions is related to the facial reduction technique and  the singularity degree of the problem. 
In particular, we show that symmetric cones are  amenable and compute {\FRFs}. From that, we are able to furnish a new H\"olderian error bound, thus extending and shedding new light on an earlier result by Sturm on semidefinite matrices. 
We also provide error bounds for the intersection of amenable cones, this will be used to prove error bounds for the doubly nonnegative cone.
At the end, we list some open problems.

	\noindent \textbf{Keywords:} error bounds, amenable cones, facial reduction, singularity degree,	symmetric cones, feasibility problem.

%
\end{abstract}
	
\section{Introduction}
In this work, we are interested in proving error bounds for the following conic feasibility problem.
\begin{align}
\text{find} & \quad x \in (\stdSpace + a) \cap \stdCone \label{eq:feas}\tag{Feas}, 
\end{align}
where $\stdCone$ is a closed convex cone contained in a finite dimensional real vector space $\jAlg$, $\stdSpace \subseteq \jAlg$ is a subspace and $a \in \jAlg$. We will write 
$(\stdCone, \stdSpace, a)$ to denote the problem \eqref{eq:feas}.
We suppose that $\jAlg$ is equipped with some inner product $\inProd{\cdot}{\cdot}$ and that the norm is induced by $\inProd{\cdot}{\cdot}$, i.e., $\norm{x} = \sqrt{\inProd{x}{x}}$.
Given a set $C\subseteq \jAlg$ and $ x \in \jAlg$, we define the distance between $x$ and $C$ as 
$\dist(x,C) = \inf\{\norm{x-y} \mid y\in C\}$.

Suppose that we are given some arbitrary $x \in \jAlg$ and we wish to measure  how far $x$ is from $(\stdSpace + a) \cap \stdCone$. Since $\stdSpace + a$ is an affine space, it is quite simple to compute
$\dist(x,\stdSpace+a)$. Also, in many cases, it is also straightforward to compute  $\dist(x,\stdCone)$. 
Na\"ively, one might expect that if we combine $\dist(x,\stdSpace+a)$ and $\dist(x,\stdCone)$ in some appropriate fashion, we might get a reasonable estimate for $\dist(x, (\stdSpace + a) \cap \stdCone)$. When $\stdCone$ is a polyhedral cone, this is indeed true. In fact, when $\stdCone$ is polyhedral, it follows from the celebrated Hoffman's Lemma that there is a constant $\kappa > 0$ such that 
\begin{equation}\label{eq:hoff}
\dist(x, (\stdSpace + a) \cap \stdCone) \leq \kappa \dist (x,\stdSpace + a) + \kappa \dist(x, \stdCone),
\end{equation}
for every $x \in \jAlg$.  This is an example of an \emph{error bound result}. As far as error bounds go, the polyhedral case is perhaps the best one could hope for. It is \emph{global}, meaning that it holds for all $x \in \jAlg$. No regularity assumptions are needed on the intersection $(\stdSpace + a) \cap \stdCone$.
It is also  \emph{Lipschitzian} meaning that there is a linear relation between the distances, so if we decrease the individual distances to $\stdCone$ and $\stdSpace + a$, the distance to $\stdCone\cap (\stdSpace + a)$ will decrease at least by the same order of magnitude. 

It is  well known that when $\stdCone$ is not polyhedral, the situation can be quite unfavourable and we cannot expect a result as nice as \eqref{eq:hoff} to hold. 
In order to obtain error bounds we need to sacrifice globality, the Lipschitzness or impose regularity conditions.
The literature on error bounds is very rich and it is not possible to do it justice here. Instead, we refer to either the comprehensive survey by Pang \cite{Pang97} or to the chapter by Lewis and Pang \cite{LP98}. 
We emphasize that many results for the nonpolyhedral case include some regularity assumption on the 
intersection $\stdCone \cap (\stdSpace+c)$. 
For instance, compactness and the condition $(\reInt \stdCone )\cap (\stdSpace+c) \neq \emptyset$ (i.e., Slater's condition) might be required for some of the results to hold, see page 313 in \cite{Pang97}.
Also, Baes and Lin recently proved Lipschitzian error bound results for the symmetric cone complementarity problem but they require Slater's condition to hold \cite{BL15}.
For nonlinear semidefinite programs, Yamashita proved error bounds under a few regularity conditions \cite{Y16}.

%

Among the several error bounds results in the literature, the one proved by Sturm in \cite{ST00} is, perhaps, one of the most extraordinary. Here, we provide a brief account. Let 
$\S^n$ denote the space of $n\times n$ symmetric matrices and $\PSDcone{n}$ denote the cone of $n \times n$ symmetric positive semidefinite matrices.  
Given a symmetric matrix $x \in \S^n$, we will denote its minimum eigenvalue by $\lambda _{\min}(x)$.
Combining Theorem 3.3 and Lemma 3.6 of \cite{ST00}, we have the
following result by Sturm.
\begin{theorem*}[Sturm's Error Bound]\label{theo:sturm}
Let $\{x_{\epsilon} \mid 0 < \epsilon \leq 1 \} \subseteq \S^n$ be a \textbf{bounded} set, with the property that $\dist(x_{\epsilon}, \stdSpace + a) \leq \epsilon$ and 
$\lambda _{\min}(x_\epsilon) \geq - \epsilon$, for all $\epsilon \in (0,1]$. Then, there exists 
constants $\kappa > 0$ and  $\gamma \geq 0$ such that 
$$
\dist(x_{\epsilon}, (\stdSpace+a)\cap \PSDcone{n}) \leq \kappa \epsilon^{(2^{-\gamma})},
$$
where $\gamma$ satisfies $\gamma \leq \min \{n-1,
\dim {\stdSpace^\perp \cap \{a\}^\perp},\spanVec(\stdSpace+a) \}$.
\end{theorem*}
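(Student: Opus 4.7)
The plan is to apply the paper's main error bound theorem directly to $\PSDcone{n}$. Since $\PSDcone{n}$ is a symmetric cone, we will show (elsewhere in the paper) that it is amenable; the main theorem then expresses $\dist(x,(\stdSpace+a)\cap \PSDcone{n})$ as a finite composition of \FRFs{} of the cone evaluated on $\dist(x,\stdSpace+a)$ and $\dist(x,\PSDcone{n})$. Both inputs are $O(\epsilon)$: the first by assumption, and the second because $\lambda_{\min}(x_\epsilon)\geq -\epsilon$ forces $\dist(x_\epsilon,\PSDcone{n})\leq \sqrt{n}\,\epsilon$. So the proof reduces to (i) computing an explicit \FRFa{} for $\PSDcone{n}$, (ii) counting how many times it must be composed, and (iii) checking that the compounded Hölderian exponent is $2^{-\gamma}$.

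For (i), I expect the \FRFa{} associated with exposing a face $F = \PSDcone{n} \cap \{z\}^\perp$ (with $z \in \PSDcone{n}$) to be Hölderian of exponent $1/2$ on bounded sets, namely $\dist(x,F) \leq C\sqrt{M}\,\sqrt{\inProd{z}{x} + \|z\|\,\dist(x,\PSDcone{n})}$ whenever $\|x\|\leq M$. Diagonalizing $z$ and writing $x$ in the matching block form $\begin{pmatrix} A & B \\ B^\T & C\end{pmatrix}$, the bound on $\inProd{z}{x}$ controls $\tr(A)$; PSD-ness together with the norm bound then yields $\|A\|_F^2 = O(M\,\tr(A))$ and, via a Schur-complement estimate, $\|B\|_F^2 = O(M\,\tr(A))$ as well. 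Projecting these two blocks to zero produces an element of $F$ at distance $O(\sqrt{M\epsilon})$, and a routine perturbation absorbs the case where $x$ is only near-PSD. This is the origin of the square-root exponent.

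For (ii), facial reduction on $(\PSDcone{n},\stdSpace,a)$ produces a strictly decreasing chain $\PSDcone{n} = F_0 \supsetneq F_1 \supsetneq \cdots \supsetneq F_\gamma = \minFace$ containing the feasible set. The three bounds on $\gamma$ correspond respectively to: the length of any strict face chain in $\PSDcone{n}$ being at most $n-1$; the exposing vectors forming linearly independent elements of $\stdSpace^\perp \cap \{a\}^\perp$; and a dual-side facial reduction argument capped by $\dim \spanVec(\stdSpace+a)$. Iterating the \FRFa{} along the chain replaces $\delta$ by $O(\sqrt{\delta})$ at every level, producing $\dist(x_\epsilon,\minFace) \leq \kappa\,\epsilon^{2^{-\gamma}}$; a final local Lipschitzian step inside $\minFace$ (where Slater now holds and Hoffman-type bounds apply) converts this into the claimed bound on $\dist(x_\epsilon,(\stdSpace+a)\cap \PSDcone{n})$. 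The main obstacle I anticipate is step (i): the delicate block-diagonal spectral estimate yielding exponent exactly $1/2$, together with the bookkeeping needed to make the constant $\kappa$ uniform over the bounded family $\{x_\epsilon\}$—this is precisely why the boundedness hypothesis is indispensable and why such error bounds cannot be made global in the non-polyhedral setting.
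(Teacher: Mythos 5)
Your route is essentially the paper's own. The theorem is quoted here from Sturm, but the paper recovers and generalizes it by exactly the chain you describe: amenability of symmetric cones (Proposition~\ref{prop:sym_am}), a {\FRF} of the form $\kappa\epsilon+\kappa\sqrt{\epsilon\norm{x}}$ obtained from a Peirce/block decomposition plus a Schur-complement estimate on the off-diagonal block (Theorem~\ref{theo:sym_am} --- your matrix computation with $\tr(A)$ controlled by $\inProd{z}{x}$ and $\norm{B}_F^2=O(M\tr(A))$ is the $\S^n$ specialization of that argument), composition along a facial-reduction chain ending in a face satisfying the PPS condition (Theorem~\ref{theo:err}, Lemma~\ref{lem:sym_res}), and a final Lipschitzian step via Hoffman's lemma and a Slater point (Proposition~\ref{prop:err2}); restricting to a bounded family with $\epsilon\le 1$ collapses the sum of powers to the worst exponent $2^{-\gamma}$ (Proposition~\ref{prop:sym_err2}). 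Your reduction of $\lambda_{\min}(x_\epsilon)\ge-\epsilon$ to $\dist(x_\epsilon,\PSDcone{n})\le\sqrt{n}\,\epsilon$ matches Remark~\ref{rem:dist}. One bookkeeping point your sketch glosses over: at intermediate steps of the chain the {\FRFa} must also absorb $\dist(x,\spanVec\stdFace_k)$, which is exactly what the composition $\comp$ of Lemma~\ref{lem:am_chain} and Proposition~\ref{prop:res_feas} handles; ``iterate the {\FRFa}'' hides this, but it is what produces the nested square roots.

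There is one genuine gap. Of the three terms in $\min\{n-1,\dim(\stdSpace^\perp\cap\{a\}^\perp),\dim\spanVec(\stdSpace+a)\}$, your chain-length analysis (like the paper's Proposition~\ref{prop:effic} and Remark~\ref{rem:sym}) delivers only the first two: $n-1$ from $\distP(\PSDcone{n})\le n-1$, and $\dim(\stdSpace^\perp\cap\{a\}^\perp)$ from the linear independence of the exposing vectors $z_i\in\stdSpace^\perp\cap\{a\}^\perp$. The bound by $\dim\spanVec(\stdSpace+a)$ does not follow from counting primal reducing directions; it requires a separate argument (Sturm obtains it by working with a different, dual-side regularization), and your phrase ``a dual-side facial reduction argument capped by $\dim\spanVec(\stdSpace+a)$'' asserts rather than proves it. As written, your $\gamma$ is only shown to satisfy two of the three inequalities in the statement, so either supply that argument or cite it from Sturm's Lemma~3.6.
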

There are several remarkable aspects of Sturm's bound.
First of all, no regularity condition is assumed on the 
intersection $\PSDcone{n} \cap (\stdSpace+a)$.
The drawback is that  instead of ``$\epsilon$'', we get ``$\epsilon ^\lambda$'' at the right-hand-side, for some $\lambda \in (0,1]$.
Error bounds of this type are called ``H\"olderian''. 
We emphasize, however, that although the bound is H\"olderian, we know that the exponent is not smaller than $2^{1-n}$. Finally,  Sturm also showed how $\gamma$ can be \emph{computed}, which is 
a significant advancement in comparison to earlier H\"olderian error bounds where it is typically very hard to estimate the exponent, see the comments after Theorems 11 and 13  in \cite{Pang97}.
It turns out that $\gamma$ depends on the \emph{singularity degree} of the system $(\PSDcone{n},\stdSpace,a)$. 
The singularity degree is currently understood as the minimum number of steps that the facial reduction algorithm (by Borwein and Wolkowicz) needs in order to fully regularize $(\PSDcone{n},{\stdSpace},a)$. 
Sturm was also the first to link an error bound result to 
facial reduction. 

The research on facial reduction  \cite{borwein_facial_1981,WM13,pataki_strong_2013,DW17} has shown that problems that do not satisfy Slater's condition are quite numerous.
For those problems, results 
such as Sturm's error bound are useful to derive convergence results.
For a recent application see 
the paper by Drusvyatskiy, Li and Wolkowicz \cite{DGW17}, 
where Sturm's bound plays an important role in deriving a rate of convergence of the alternate projection method for semidefinite feasibility problems that do not satisfy Slater's condition.

Sturm's error bound was later extended to a mixed system of semidefinite  and second order cone constraints, see the chapter by 
Luo and Sturm \cite{sturm_handbook}.
Apart from that, it seems that no other paper attempted to establish further links between error bounds and facial reduction. 
It is not known, for instance, for which 
convex cones a result similar to Sturm's error bound holds.
This paper is, hopefully, a step towards answering 
this question.

\subsection{The contributions of this paper}
Two concepts are introduced in this paper: \emph{amenable cones} and \emph{\FRFs}. 
The main goal is to show that for \emph{amenable cones},  a result analogous to Sturm's error bound holds. 
This article has the following contributions.



\begin{enumerate}
	\item We define amenable cones (Definition \ref{def:beaut}) and prove that polyhedral cones, projectionally exposed cones, symmetric cones and strictly convex cones are amenable (Propositions~\ref{prop:beaut} and \ref{prop:sym_am}). 
	Roughly speaking, a cone $\stdCone$ is amenable if for every face $\stdFace \face \stdCone$, we have that  $\dist(x,\stdCone)$ provides a reasonable upper bound to 
	$\dist(x,\stdFace)$, when $x \in \spanVec \stdFace$.
	
	Furthermore, we observe that amenable cones are nice (Proposition~\ref{prop:am_nice}) and show that 
	amenability is preserved by direct products and by taking injective linear images (Proposition \ref{prop:beaut_prev}).
	\item We define {\FRFs} (Definition~\ref{def:ebtp}). 
	Let  $\stdFace \face \stdCone$ and $z \in \stdFace^*$, where $\stdFace^*$ is the dual cone of $\stdFace$. A {\FRF} provides way of estimating 
	$\dist(x,\stdFace \cap \{z\}^\perp)$ by using other available information such as $\dist(x,\stdCone)$, $\dist(x,\spanVec \stdFace)$ and $\inProd{x}{z}$.
	We prove that symmetric cones admit {\FRFs} of the form 
	$\kappa \epsilon + \kappa \sqrt{\epsilon \norm{x}}$ (Theorem \ref{theo:sym_am}).

	Furthermore, {\FRFs} can be easily constructed for direct products of  amenable cones, provided that {\FRFs} are known for each individual cone.
	Similarly, {\FRFs} are also easily constructed for injective linear images of convex cones. See Proposition \ref{prop:ebtp}.
	\item For \emph{amenable cones}, we prove a novel  error bound 
	result that does not require constraint qualifications. The error bound is expressed as a composition of {\FRFs}. 
	The number of function compositions is connected to facial reduction, see Theorem \ref{theo:err} and Proposition \ref{prop:effic}.
	We then use Theorem \ref{theo:err} to provide two H\"olderian error bounds for symmetric cones, see Theorem \ref{theo:sym_err} and 
	Proposition \ref{prop:sym_err2}. 
	
	We also study error bounds for the intersection of cones and derive a result for the doubly nonnegative cone, see Proposition \ref{prop:doubly}.
\end{enumerate}

This article is divided as follows. In Section \ref{sec:prel} we review  several necessary tools. If the reader already has experience with the material therein, we recommend skipping most of Section \ref{sec:prel}.
In Section \ref{sec:am}, we introduce amenable cones and 
{\FRFs}. In Section \ref{sec:err}, we derive error bound results. The case of symmetric cones is discussed in 
Section \ref{sec:sym}.
In Section \ref{sec:conc}, we summarize this work and point out future 
research directions.

\section{Preliminaries}\label{sec:prel}

\subsection{Basic definitions and assumptions}\label{sec:conv}
We recall our assumption that $\jAlg$ is
equipped with some arbitrary inner product $\inProd{\cdot}{\cdot}$ and that the distance 
function   $\dist(\cdot,\cdot)$ is  computed with respect the  norm $\norm{\cdot}$ induced by $\inProd{\cdot}{\cdot}$.
For a direct product $\jAlg = \jAlg^1\times \jAlg^2$, we will assume that the inner product splits along the product so that $$\inProd{(x_1,x_2)}{(y_1,y_2)} = \inProd{x_1}{y_1} + \inProd{x_2}{y_2},$$ when $(x_1,x_2),(y_1,y_2) \in \jAlg^1 \times \jAlg^2$. By doing so, if $C^1 \subseteq \jAlg^1 $ and $C^2 \subseteq \jAlg^2$ , we have 
\begin{equation}
\dist((x_1,x_2),C^1\times C^2) = \sqrt{\dist(x_1,C^1)^2 + \dist(x_2,C^2)^2 }. \label{eq:prod_proj}
\end{equation}
We remark that because all norms on a finite dimensional vector space are equivalent, our assumption that  the norm is induced by the inner product is not very restrictive. 
 
Let $C \subseteq \jAlg$ be an arbitrary convex set. We will denote its relative interior, closure and linear span by $\reInt C$, $\closure C$ and $\spanVec C$, respectively. 
We will write $C^\perp$ for the orthogonal complement of $C$, which is defined as $$C^\perp = \{x \in \jAlg \mid \inProd{x}{y} = 0, \forall y \in C \}.$$
We recall that a set $\stdCone$ is a convex cone if for all nonnegative $\alpha, \beta$ and all $x, y \in \stdCone$, we have 
$\alpha x + \beta y \in \stdCone$. 
We will write $\stdCone^*$ for the dual 
cone of $\stdCone$ with respect the inner product 
$\inProd{\cdot}{\cdot}$. We have 
$$\stdCone ^* = \{x \in \jAlg \mid \inProd{x}{y} \geq 0, \forall y \in \stdCone \}.$$ 
We write $\lineality \stdCone$ for the lineality space of $\stdCone$, which is defined as $$\lineality \stdCone  = \stdCone \cap - \stdCone.$$
A cone is said to be  \emph{pointed} if $\lineality \stdCone =\{0\}$.

Let $\stdCone$ be a convex cone and $\stdFace \subseteq \stdCone$ be a convex cone contained in $\stdCone$.  $\stdFace$ is a \emph{face} of $\stdCone$  if and only if the property below holds
$$
x,y \in \stdCone, x+y \in \stdFace \Rightarrow x,y \in \stdFace.
$$
In this case, we write $\stdFace \face \stdCone$.
If there exists $z \in \stdCone^*$ such that 
 $\stdFace = \stdCone^* \cap \{z\}^\perp$, then 
$\stdFace$ is said to be an \emph{exposed face}. 
If all the faces of $\stdCone$ are exposed, then 
$\stdCone$ is said to be \emph{facially exposed}.

We define the \emph{conjugate face} of $\stdFace$ with respect to $\stdCone$ as $$\stdFace^{\Delta} = \stdCone^* \cap \stdFace^\perp.$$
Recall that if $\stdFace \face \stdCone$, then
$\stdFace = \stdCone \cap \spanVec \stdFace$. It follows that 
$\stdFace^* = \closure(\stdCone^* + \stdFace^\perp )$. 
A cone $\stdCone$ is said to be \emph{nice} when the closure can be removed, that is, if the following property holds
$$
\stdFace \face \stdCone \Rightarrow \stdCone^* + \stdFace^\perp \text{ is closed}. 
$$
%
Niceness plays an important role in the study of  the facial structure of convex cones.  It is also important in the context of optimality conditions, see, for example, Corollary~4.2 in the work of Borwein and Wolkowicz \cite{BW82}. 
Regularization approaches such as facial reduction have very nice theoretical properties when the underlying cone is nice, see the works by Pataki \cite{pataki_strong_2013,P13}, related works by Tun\c{c}el and Wolkowicz \cite{TW12},
Roshchina \cite{R14} and by Roshchina and Tun\c{c}el~\cite{RT17}. 

In this work, we will need the following technical fact related to niceness.

\begin{proposition}\label{prop:cj_ri}
	Let $\stdCone$ be a closed convex cone such that $\stdCone ^*$ is nice.  
	Let $z \in \stdCone^*$
	and  $\stdFace = \stdCone \cap \{z\}^{\perp}$. Then, 
	$z \in \reInt \stdFace^{\Delta}$.
\end{proposition}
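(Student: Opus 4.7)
The plan is to identify $\stdFace^\Delta$ with the minimal face of $\stdCone^*$ containing $z$; since a point always lies in the relative interior of the unique face of a convex set that it generates, this immediately yields $z \in \reInt \stdFace^\Delta$. Membership $z \in \stdFace^\Delta$ is free: $z \in \stdCone^*$ by hypothesis, and $\stdFace \subseteq \{z\}^\perp$ gives $z \in \stdFace^\perp$.

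Let $H$ denote the minimal face of $\stdCone^*$ containing $z$, so $z \in \reInt H$. Since $\stdFace^\Delta$ is a face of $\stdCone^*$ containing $z$, minimality yields $H \subseteq \stdFace^\Delta$, so the task reduces to the reverse inclusion.

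The key intermediate step is to show $H^\Delta = \stdFace$. One direction is immediate from $z \in H$: $H^\perp \subseteq \{z\}^\perp$ forces $H^\Delta = \stdCone \cap H^\perp \subseteq \stdCone \cap \{z\}^\perp = \stdFace$. For the other direction, fix $y \in \stdFace$; because $z \in \reInt H$, there is a neighbourhood of $0$ in $\spanVec H$ whose translate by $z$ still lies in $H \subseteq \stdCone^*$. For any such $w$, combining $\inProd{y}{z} = 0$ with $\inProd{y}{z+w} \geq 0$ gives $\inProd{y}{w} \geq 0$; applying the same to $-w$ forces $\inProd{y}{w} = 0$ on a neighbourhood of $0$ in $\spanVec H$, and hence on all of $\spanVec H$ by linearity. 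Therefore $y \in \stdCone \cap H^\perp = H^\Delta$, so $\stdFace \subseteq H^\Delta$. Applying $\Delta$ once more yields $H^{\Delta\Delta} = \stdFace^\Delta$.

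The concluding step, and the only place the hypothesis is used, is to deduce $H = H^{\Delta\Delta}$. In general one only has $H \subseteq H^{\Delta\Delta}$, with equality exactly when $H$ is an exposed face of $\stdCone^*$. Since $\stdCone^*$ is nice, Pataki's result that nice cones are facially exposed implies that every face of $\stdCone^*$, in particular $H$, is exposed, so $H = H^{\Delta\Delta} = \stdFace^\Delta$, and $z \in \reInt \stdFace^\Delta$ as desired. The main obstacle I anticipate is this last appeal: without niceness, $H^{\Delta\Delta}$ may strictly contain $H$ and the chain of equalities breaks.
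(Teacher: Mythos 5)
Your proof is correct, but it follows a genuinely different route from the paper's. The paper argues by contradiction: assuming $z \notin \reInt \stdFace^{\Delta}$, it invokes a separation theorem to produce $x \in \stdFace^{\Delta*}$ with $\inProd{x}{z}=0$ and $x \notin \stdFace^{\Delta\perp}$, and then uses niceness in its raw form --- the identity $\stdFace^{\Delta*} = \stdCone + \stdFace^{\Delta\perp}$ without closure --- to decompose $x = u + v$ and reach a contradiction. You instead work constructively with the minimal face $H$ of $\stdCone^*$ containing $z$, compute $H^{\Delta} = \stdFace$ directly (your relative-interior perturbation argument for $\stdFace \subseteq H^{\Delta}$ is sound, and the reverse inclusion from $H^\perp \subseteq \{z\}^\perp$ is immediate), and close the loop via the standard fact that $H^{\Delta\Delta}$ is the smallest exposed face of $\stdCone^*$ containing $H$, so that $H = H^{\Delta\Delta} = \stdFace^{\Delta}$ once $H$ is exposed. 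Where the paper uses niceness directly through the definition plus an elementary separation argument, you route it through Pataki's theorem that nice cones are facially exposed --- a strictly heavier external result, but one the paper itself cites elsewhere (in the proof of Proposition~\ref{prop:am_nice}), so the appeal is legitimate. The trade-off is that the paper's proof is self-contained modulo Rockafellar's separation theorem, whereas yours is shorter on computation and makes the structural content transparent: the proposition is exactly the statement that $\stdFace^{\Delta}$ is the minimal face of $\stdCone^*$ containing $z$, which holds precisely because that minimal face is exposed. Both proofs correctly isolate niceness as the single non-elementary ingredient.
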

\begin{proof}
	By definition of the conjugate face, we have 
	$z \in \stdFace^{\Delta}$.
	Suppose $z \not \in \reInt \stdFace^{\Delta}$. 
	By invoking a separation theorem (e.g., Theorem 11.3 in \cite{Roc70}), 
	we can find $x \in \stdFace^{\Delta*}$ such that $\inProd{x}{z} = 0$ and 
	$x \not \in \stdFace^{\Delta\perp}$.
	Then, the niceness of $\stdCone^*$ implies that 
	$$\stdFace^{\Delta*} = \stdCone + \stdFace^{\Delta\perp}.$$
	Therefore, $x = u + v$, where $u \in \stdCone$ and 
	$v \in 	\stdFace^{\Delta\perp}$. 
	Since $\inProd{x}{z} = 0$ and $z \in \stdFace^\Delta$, we obtain 
	that $$  \inProd{x}{z} = \inProd{u}{z} = 0,$$ that is, $u \in \stdFace$. Since $\stdFace \subseteq \stdFace^{\Delta \perp}$, we conclude that $x \in \stdFace^{\Delta \perp}$, which is a contradiction.
	\end{proof}
If $\stdMap$ is a linear map, we will denote by 
$\stdMap^\T$ the corresponding adjoint map. 
The operator norm of $\stdMap$ will be denoted by $\norm{A} = \sup\{ \norm{Ax} \mid \norm{x} \leq 1 \}$.	
We will denote the set of nonnegative real numbers by $\Re_+$.
We conclude this subsection with a reminder on our overall assumption on $\stdCone$.
\begin{assumption}\label{asmp:1}
Throughout this paper, we assume that $\stdCone$ denotes a pointed closed convex cone.
\end{assumption}

\subsection{Hoffman's Lemma}
Hoffman's Lemma can be stated in many different ways. 
For the sake of completeness we state below the format we will use throughout this article, which is a consequence
of Hoffman's original result \cite{HF57}. 
We recall that a set $C$ is said to be \emph{polyhedral} 
if it can be expressed as the solution set of a finite system of 
linear inequalities. 

\begin{theorem}[Hoffman's Lemma \cite{HF57}]\label{theo:hoff}
Let $C_1,\ldots,C_m \subseteq \jAlg$ be polyhedral sets such that 
$\cap _{i=1}^m C_i \neq \emptyset$.
There exists a positive constant $\kappa$ such that 
$$
\dist(x, \cap _{i=1}^m C_i) \leq  \kappa\sum _{i=1}^m  \dist(x,C_i),\quad \forall x \in \jAlg.
$$
\end{theorem}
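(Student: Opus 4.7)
The plan is to reduce the statement to the classical residual form of Hoffman's inequality applied to the combined linear system describing $\bigcap_{i=1}^m C_i$, and then translate the componentwise residuals into the individual distances $\dist(x, C_i)$. Since each $C_i$ is polyhedral, fix representations $C_i = \{y \in \jAlg \mid A_i y \leq b_i\}$ for suitable linear maps $A_i$ and vectors $b_i$. Stack these into a single system $Ay \leq b$, so that $\bigcap_{i=1}^m C_i = \{y \in \jAlg \mid Ay \leq b\}$, which is nonempty by hypothesis.

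Next I would invoke the original form of Hoffman's theorem for a single linear inequality system \cite{HF57}: there is a constant $\kappa_0 > 0$, depending only on $A$, such that
\begin{equation*}
\dist(x, \{y \mid Ay \leq b\}) \;\leq\; \kappa_0 \, \norm{(Ax - b)_+}, \qquad \forall x \in \jAlg,
\end{equation*}
where $(\cdot)_+$ denotes the componentwise positive part. Since the positive-part residual of the stacked system is just the concatenation of the residuals of the individual systems, we get
\begin{equation*}
\norm{(Ax - b)_+} \;\leq\; \sum_{i=1}^m \norm{(A_i x - b_i)_+}.
\end{equation*}

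It remains to bound each $\norm{(A_i x - b_i)_+}$ by a constant times $\dist(x, C_i)$. This step is elementary: letting $y_i$ be the metric projection of $x$ onto $C_i$, we have $A_i y_i \leq b_i$, so componentwise
\begin{equation*}
(A_i x - b_i)_+ \;=\; (A_i(x - y_i) + (A_i y_i - b_i))_+ \;\leq\; (A_i(x - y_i))_+ ,
\end{equation*}
and hence $\norm{(A_i x - b_i)_+} \leq \norm{A_i}\,\dist(x, C_i)$. Chaining everything yields the desired bound with $\kappa := \kappa_0 \max_{i} \norm{A_i}$.

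The only genuinely nontrivial ingredient is the classical Hoffman inequality for a single system, which is a cited result \cite{HF57}. Everything else is bookkeeping: choosing explicit representations of the $C_i$, concatenating inequality systems, and using the projection onto each $C_i$ to control the residual by the distance. Thus there is no substantial obstacle; the main thing to be careful about is that the resulting $\kappa$ depends on the chosen representations $A_i$, but not on $x$.
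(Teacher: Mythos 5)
Your proof is correct. The paper itself offers no proof of this statement: it is presented as a known consequence of Hoffman's original result and simply cited to \cite{HF57}, so there is nothing to compare against. Your reduction --- stacking the inequality representations, invoking the classical residual form of Hoffman's inequality for the combined system, and then controlling each block residual $\norm{(A_i x - b_i)_+}$ by $\norm{A_i}\,\dist(x,C_i)$ via the metric projection onto $C_i$ --- is the standard and sound way to pass from the single-system form to the multi-set form stated here; the only caveats (the constant depends on the chosen representations, and norm equivalence in finite dimensions absorbs any discrepancy between the paper's inner-product norm and the norm used in the classical statement) are ones you have already flagged or that are harmless.
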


\subsection{Constraint qualifications}\label{sec:cq}
Although we will not assume that $(\stdCone,\stdSpace,a)$ satisfies some constraint qualification, it is still necessary to discuss them. 
We say that $(\stdCone, \stdSpace,a)$ satisfies \emph{Slater's condition} if $(\reInt \stdCone) \cap (\stdSpace+a) \neq \emptyset$.
In this work, however, we will use a weaker constraint qualification called the \emph{partial polyhedral Slater's (PPS) condition}, which is defined as follows.
\begin{definition}[Partial Polyhedral Slater's condition]\label{def:pps}
	Let $\stdCone = \stdCone ^1\times \stdCone ^2$, where  $\stdCone ^1, \stdCone ^2$ are closed convex cones such that $\stdCone ^2$ is polyhedral.
	We say that $(\stdCone,\stdSpace,a)$ satisfies the Partial Polyhedral Slater's (PPS) condition if there exists $(x_1,x_2) \in \stdSpace+a$, such that $x_1 \in \reInt \stdCone^1$ and $x_2 \in \stdCone^2$.
\end{definition}%
The PPS condition reflects the fact that we only care about having a relative interior point with respect the part of the cone that we know that is not polyhedral.
When a conic linear program satisfies the PPS condition, we get the same consequences of the usual Slater's condition: 
zero duality gap and, when the optimal value is finite, the dual problem is attained (e.g., Proposition 23 in \cite{LMT15}).

We will treat Slater's condition as a particular 
case of the PPS condition. 
In fact, if $(\stdCone, \stdSpace, a)$ satisfies the Slater's condition, we can add an extra dummy coordinate, so that 
$(\stdCone \times \{0\}, \stdSpace \times \{0\}, (a,0))$ satisfies the PPS condition. 
Similarly, if $\stdCone$ is a polyhedral cone, we will also consider that 
the PPS conditions holds, since we can also add an extra coordinate 
and take $\stdCone^1 = \{0\}$.

\subsection{Facial Reduction}\label{sec:fra}
The facial reduction algorithm originally appeared in \cite{Borwein1981495} and was developed by Borwein and Wolkowicz  as a way of dealing with conic convex programs that do not satisfy regularity conditions. 
More recently, Pataki \cite{pataki_strong_2013}   and Waki and Muramatsu \cite{WM13} gave simplified descriptions of facial reduction for the special case of conic linear programs.

Suppose that $(\stdCone, \stdSpace, a)$ is feasible.
The basic idea is that  there exists an unique face $\minFace$ of $\stdCone$ with the following properties:
\begin{enumerate}[label=$({\alph*}$)]
	\item $\minFace \cap (\stdSpace+a) = \stdCone \cap (\stdSpace+a)$,
	\item $(\minFace,\stdSpace,a)$ satisfies Slater's condition.
\end{enumerate}
The first property means that the feasible region stays the same when we replace $\stdCone$ by $\minFace$. 
It can be shown that properties $(a)$ and $(b)$ imply that  $\minFace$ is the smallest face of $\stdCone$ containing 
$\stdCone \cap (\stdSpace+a)$, see item $(ii)$ of Proposition 2.2 in \cite{pataki_handbook}. 
For this reason, $\minFace$ is called the \emph{minimal face} of the problem  $(\stdCone, \stdSpace, a)$.


The classical facial reduction algorithm construct a chain of faces as follows:
$$
\minFace =\stdFace _{\ell}  \subsetneq \cdots \subsetneq \stdFace_1 = \stdCone, 
$$
where $\stdFace _{i+1} = \stdFace _{i} \cap \{z_i\}^\perp$ and 
$z_i \in \stdFace_i^* \cap \stdSpace \cap  \{a\}^\perp$, for  $i=1,\ldots, \ell-1$. 
The $z_i$ are  called \emph{reducing directions} and computing them usually forms the bulk of the computational cost of facial reduction.
There are quite a few recent works discussing how to compute those directions and how to do facial reduction efficiently and in a numerical stable manner \cite{csw13,lourenco_muramatsu_tsuchiya3,PP17,Fr16,PFA17,YPT17}. 
We regard finding each $z_i$ as \emph{one facial reduction step}.

\subsubsection{Singularity degree and distance to polyhedrality}\label{sec:sing}
For a fixed $(\stdCone, \stdSpace, a)$, we might need many facial reduction steps before $\minFace$ is reached. 
Motivated by that, we define the \emph{singularity degree} of $(\stdCone, \stdSpace, a)$ as the minimum number of facial reduction steps before 
$\minFace$ is reached.
This definition of singularity degree is adopted, for example, in 
\cite{LP17,DPW15} and in a recent survey~\cite{DW17}.
However, the first usage of \emph{singularity degree} in the context of facial reduction was due to Sturm in \cite{ST00} and it had a slightly different meaning, see section 5.4 and footnote 3 in \cite{LMT15}.
 
In particular, according to Sturm's definition, if $\minFace = \{0\}$, then the singularity degree is zero. This makes perfect sense in the context of \cite{ST00}, since if $\minFace = \{0\}$ then a Lipschitzian error bound holds for $(\stdCone, \stdSpace, a)$, see page 1232 and Equation~(2.5) therein. 
In this paper, we also make a similar observation in Proposition~\ref{prop:err_z}.
Nevertheless, it seems that most researchers are now inclined to define the singularity degree as in \cite{LP17}, so we shall also follow suit.
In this case, if $\minFace = \{0\}$, then the singularity degree should be 
at least one when $\dim{\stdCone} \geq 1$.

We will denote the singularity degree of $(\stdCone, \stdSpace,a)$ by 
$\ds(\stdSpace,a)$. Note that $\ds(\stdSpace,a)$ depends on $\stdCone$, $\stdSpace$ and $a$. However, it is possible to give a bound on 
the singularity degree that does not depend on $\stdSpace$ nor $a$.
In what follows, if we have a chain of faces $\stdFace _{\ell} \subsetneq \cdots \subsetneq \stdFace _{1}$, the 
length of the chain is defined to be $\ell$. 
Then, the \emph{longest chain of faces of $\stdCone$} is denoted by 
$\ell _{\stdCone}$ and is defined as the length of 
the longest chain of face of $\stdCone$ such that all inclusions are strict. We have that $\ds(\stdSpace,a) \leq \ell_{\stdCone}$.

Sometimes it is enough to find a face that satisfies a less strict constraint qualification.
In particular, the \emph{FRA-Poly} algorithm in \cite{LMT15} is divided in two phases. In the first phase, a face satisfying the PPS condition is found and in the second phase, $\minFace$ is computed.
In many cases of interest, this two-phase strategy leads to better bounds on the singularity degree than the classical facial reduction algorithm, see for instance, Table 1 in \cite{LMT15}.
We will recall here a few definitions and results from \cite{LMT15}.

\begin{definition}
	The \emph{distance to polyhedrality} $\distP(\stdCone)$ is the 
	length \emph{minus one} of the longest strictly ascending chain of nonempty faces $\stdFace _{\ell} \subsetneq \cdots \subsetneq \stdFace _{1} $ which satisfies:
	\begin{enumerate}[label=({\it \alph*})]
		\item  $\stdFace _{\ell}$ is polyhedral;  
		\item $\stdFace _j$ is not polyhedral for $j < \ell$.
	\end{enumerate}
\end{definition}
 See Example 1 in \cite{LMT15} for the values of $\distP(\stdCone)$ for some 
 common cones. In particular, if $\stdCone$ is polyhedral, we have 
 $\distP(\stdCone) = 0$. 
 In this paper, we will compute a bound for $\distP(\stdCone)$ when $\stdCone$ is a symmetric cone, see Remark~\ref{rem:sym}.
The next result gives an upper bound to the number of facial reduction 
steps that are necessary before a face satisfying the PPS condition is found. 
\begin{proposition}\label{prop:fra_poly}
Let $\stdCone = \stdCone^1\times \cdots \times \stdCone^s$, where 
each $\stdCone^i$ is a pointed closed convex cone.
Suppose $(\stdCone,\stdSpace,a)$ is feasible.
There is a chain of faces  
$$
\stdFace _{\ell}  \subsetneq \cdots \subsetneq \stdFace_1 = \stdCone 
$$
 of length $\ell$ and vectors $(z_1,\ldots, z_{\ell-1})$ satisfying the following properties.
\begin{enumerate}[label=({\it \roman*})]
	\item $\ell -1\leq  \sum _{i=1}^{s} \distP(\stdCone ^i)  \leq \dim{\stdCone}$
	\item For all $i \in \{1,\ldots, \ell -1\}$, we have
	\begin{flalign*}%
	z_i &\in \stdFace _i^* \cap \stdSpace^\perp \cap \{a\}^\perp, &\\
		\stdFace _{i+1} &= \stdFace _{i} \cap \{z_i\}^\perp.&
	\end{flalign*}		
	\item $\stdFace _{\ell} \cap (\stdSpace+a) = \stdCone \cap (\stdSpace + a)$ and 	$(\stdFace _{\ell},\stdSpace,a)$ satisfies the PPS condition.
\end{enumerate}
\end{proposition}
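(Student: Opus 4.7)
The plan is to run facial reduction iteratively, starting with $\stdFace_1 = \stdCone$, and at each step $i$ either certify that $(\stdFace_i, \stdSpace, a)$ satisfies the PPS condition (and terminate with $\ell = i$), or produce a reducing direction $z_i \in \stdFace_i^* \cap \stdSpace^\perp \cap \{a\}^\perp$ and set $\stdFace_{i+1} = \stdFace_i \cap \{z_i\}^\perp$. The two nontrivial tasks are: (i) ensuring that every reducing direction can be chosen to be \emph{supported} on non-polyhedral blocks of the product decomposition, and (ii) bounding the total number of iterations by $\sum_{i=1}^s \distP(\stdCone^i)$.

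First, I would exploit the product structure: every face of $\stdCone$ decomposes as $\stdFace_i = \stdFace_i^1 \times \cdots \times \stdFace_i^s$ with $\stdFace_i^j \face \stdCone^j$, and correspondingly $\stdFace_i^* = (\stdFace_i^1)^* \times \cdots \times (\stdFace_i^s)^*$. Thus I can write $z_i = (z_i^1, \ldots, z_i^s)$. The usual facial reduction step only guarantees some $z_i \in \stdFace_i^* \cap \stdSpace^\perp \cap \{a\}^\perp$ with $z_i \notin \stdFace_i^\perp$; feasibility preservation, $\stdFace_{i+1} \cap (\stdSpace + a) = \stdFace_i \cap (\stdSpace + a)$, then follows because any feasible $x$ satisfies $\inProd{x}{z_i} = \inProd{x - a}{z_i} + \inProd{a}{z_i} = 0$.

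Next, I would prove the key lemma: if $(\stdFace_i, \stdSpace, a)$ fails the PPS condition, then there exists a reducing $z_i$ whose nonzero blocks lie only in indices $j$ for which $\stdFace_i^j$ is \emph{not} polyhedral, and moreover $\stdFace_{i+1}^j$ is a strict face of $\stdFace_i^j$ for at least one such non-polyhedral index. Failure of PPS means the convex system ``$x \in \stdSpace + a$, $x^j \in \stdFace_i^j$ for all $j$, $x^j \in \reInt \stdFace_i^j$ for each non-polyhedral $j$'' is infeasible. Separating the affine set $\stdSpace + a$ from the relevant face/relative interior combination (using a standard alternative theorem of conic type, together with Proposition~\ref{prop:cj_ri}) produces a certificate $z_i$ whose polyhedral-index components can be set to zero by a Hoffman-type argument on the polyhedral blocks: the polyhedral components of the feasible system impose no interior requirement and can be absorbed into $\stdSpace + a$ without generating a normal direction.

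For the counting, I would assign each iteration to the non-polyhedral component $j(i)$ in which $\stdFace_i^j$ is strictly reduced. By construction the sequence $(\stdFace_i^j)_{i}$ (restricted to indices $i$ with $j(i) = j$) is a strictly descending chain of non-polyhedral faces of $\stdCone^j$, followed by at most one step into a polyhedral face, so its length is at most $\distP(\stdCone^j)$. Summing over $j$ yields $\ell - 1 \leq \sum_j \distP(\stdCone^j) \leq \dim \stdCone$. Termination then produces a face satisfying PPS, which is item $(iii)$.

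The main obstacle will be establishing the controlled-support lemma — namely, that the reducing direction can always be chosen with $z_i^j = 0$ whenever $\stdFace_i^j$ is polyhedral, and strictly reducing on some non-polyhedral block. A direct separation gives a $z_i$ that might charge polyhedral blocks as well; the refinement requires localizing the PPS failure to the non-polyhedral factor, which is exactly where Hoffman's Lemma (Theorem~\ref{theo:hoff}) and the distinction between polyhedral and non-polyhedral Slater-type conditions become essential.
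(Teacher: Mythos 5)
The paper's own proof of this proposition is a one-line delegation to Phase~1 of the FRA-Poly algorithm of \cite{LMT15} (item $(i)$ of Proposition~8 there), so you are in effect reproving the cited result. Your architecture is the right one: iterate facial reduction until the PPS condition holds, show that each step strictly cuts at least one non-polyhedral block, and charge each step to such a block to get $\ell-1\leq\sum_i\distP(\stdCone^i)$. The feasibility-preservation argument and the counting argument (including the observation that reductions of polyhedral blocks need not be counted, and that the per-block chains need not start at $\stdCone^j$ since $\distP$ is defined over arbitrary chains) are fine.

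However, your ``controlled-support lemma'' is false, and this is exactly where the content of the proposition lives. Take $\stdCone=\SOC{2}{3}\times\Re_+$ in $\Re^3\times\Re$, $a=0$ and $\stdSpace=\{(x_1-s,\,x_1,\,0,\,s):x_1,s\in\Re\}$. The feasible set is $\{(t,t,0,0):t\geq 0\}$, so PPS fails at $\stdFace_1=\stdCone$, yet a direct computation gives $\stdCone^*\cap\stdSpace^\perp=\{\alpha(1,-1,0,1):\alpha\geq0\}$: \emph{every} nonzero reducing direction has a nonzero component on the polyhedral block. Your sketch conflates two different certificates. Writing $\stdFace_i=\stdFace_N\times\stdFace_P$ (non-polyhedral and polyhedral blocks) and separating $\reInt\stdFace_N\times\jAlg_P$ from the polyhedron $D=(\stdSpace+a)\cap(\jAlg_N\times\stdFace_P)$ does yield a $z$ with $z_P=0$ and $z_N\in\stdFace_N^*\setminus\stdFace_N^\perp$, but this $z$ only satisfies $\inProd{z}{\cdot}\leq0$ on $D$; it need not lie in $\stdSpace^\perp\cap\{a\}^\perp$ as item $(ii)$ requires (in the example, $z=(1,-1,0,0)\notin\stdSpace^\perp$). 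To restore orthogonality to $\stdSpace+a$ one uses exactness of infimal convolution of support functions for polyhedra (equivalently, LP duality) to write $z=\hat z+q$ with $q\in\{0\}\times(-\stdFace_P^*)$, $\hat z\in\stdSpace^\perp$ and $\inProd{\hat z}{a}\leq0$; evaluating $\hat z$ at any feasible point forces $\inProd{\hat z}{a}=0$, and $\hat z$ is the reducing direction. Its polyhedral components equal $-q_P\in\stdFace_P^*$ and are in general nonzero---they must be switched \emph{on}, not zeroed out. What survives, and all the counting needs, is that $\hat z_N=z_N\notin\stdFace_N^\perp$, so some non-polyhedral block is strictly reduced. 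With the lemma restated this way and proved via the decomposition above, the rest of your argument goes through.
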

\begin{proof}
As mentioned previously, \emph{FRA-Poly} is divided in two phases \cite{LMT15}. 
In Phase~1, it computes  the directions $z_i$ as in 
item $(ii)$. Then, it ends with a face satisfying the PPS condition, as 
in item $(iii)$.
The bound on the number of directions follows from item $(i)$ of Proposition 8 in \cite{LMT15} and from the fact that $\distP(\stdCone ^i) \leq \dim{\stdCone^i}$ for every $i$.\footnote{Note that if $\stdFace \face \stdCone$ and 
	$\stdFace \subsetneq \stdCone$, them $\dim \stdFace < \dim \stdCone $.} 	
\end{proof}
%
In this paper, we define the quantity $\dpp(\stdSpace,a)$, which 
is the minimum number of reduction directions needed to find a 
face $\stdFace \face \stdCone$ that contains $\stdCone \cap (\stdSpace+a)$ and such that 
$(\stdFace, \stdSpace,a)$ satisfies the PPS condition.

\subsection{Distance functions and generalized eigenvalue functions  }\label{sec:eig}
In this subsection, we will briefly discuss a generalization of the concept of eigenvalues introduced by Renegar in \cite{RE16}.
Let $\stdCone$ be a pointed closed convex cone and $d \in \reInt \stdCone$, then the generalized 
eigenvalue function of $\stdCone$ with respect to $d$ is 
\begin{equation}
\eig{d}{\stdCone}(x) = \inf \{t \mid x - td \not \in \stdCone \}.\label{eq:dist_def}
\end{equation}
With that, we have $x - \eig{d}{\stdCone}(x)d \in \stdCone$  for all 
$x \in \spanVec \stdCone$. 
We also have 
\begin{align*}
x \in \stdCone &\iff \eig{d}{\stdCone}(x) \geq 0\\
x \in \reInt \stdCone &\iff \eig{d}{\stdCone}(x) > 0.
\end{align*}
We observe that if $\stdCone = \PSDcone{n}$ and $d$ is the $n\times n$ identity matrix, then 
$\lambda _{\min}(x) = \eig{d}{\stdCone}(x) $, for all $x \in \jAlg$.
Renegar proved the following result in \cite{RE16}, see Proposition 2.1 therein.
\begin{proposition}
	Let $\stdCone$ be a closed pointed convex cone and $d \in \reInt \stdCone$. Then, the function $\eig{d}{\stdCone}(x)$ is concave and Lipschitz continuous over $\spanVec \stdCone$.
\end{proposition}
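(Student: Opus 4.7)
The plan is to handle the two assertions separately, using only the definition of $\eig{d}{\stdCone}$ together with the observation that for $x \in \spanVec \stdCone$ the set $\{t \in \Re \mid x - td \in \stdCone\}$ is a nonempty closed interval of the form $(-\infty, t^*]$, with $t^* = \eig{d}{\stdCone}(x)$. Nonemptiness follows because $d \in \reInt \stdCone$, so for sufficiently large $\alpha > 0$ we have $x + \alpha d \in \stdCone$; the interval is closed by closedness of $\stdCone$, and convex by convexity of $\stdCone$. In particular,
\[
\eig{d}{\stdCone}(x) = \max\{t \mid x - td \in \stdCone\},
\qquad x - \eig{d}{\stdCone}(x) d \in \stdCone.
\]

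For concavity, take $x,y \in \spanVec \stdCone$ and $\alpha \in [0,1]$, and set $s = \eig{d}{\stdCone}(x)$, $t = \eig{d}{\stdCone}(y)$. Then $x - sd, y - td \in \stdCone$, and convexity of $\stdCone$ gives
\[
\alpha(x - sd) + (1-\alpha)(y - td) = (\alpha x + (1-\alpha) y) - (\alpha s + (1-\alpha) t) d \in \stdCone,
\]
so $\eig{d}{\stdCone}(\alpha x + (1-\alpha)y) \geq \alpha s + (1-\alpha) t$, which is the desired inequality.

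For Lipschitz continuity, I would invoke that $d \in \reInt \stdCone$ to extract a radius $r > 0$ such that every $u \in \spanVec \stdCone$ with $\norm{u} \leq r$ satisfies $d + u \in \stdCone$ (using $\affineHull \stdCone = \spanVec \stdCone$ because $\stdCone$ is a cone). Now fix $x, y \in \spanVec \stdCone$ with $x \neq y$, let $s = \eig{d}{\stdCone}(x)$, and set $\tau = \norm{x-y}/r$. Then $(y-x)/\tau \in \spanVec \stdCone$ has norm at most $r$, so $d - (y-x)/\tau \in \stdCone$, and scaling by $\tau \geq 0$ gives $\tau d - (y - x) \in \stdCone$. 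Adding this to $x - sd \in \stdCone$ yields
\[
y - (s - \tau) d = (x - sd) + (\tau d - (y - x)) \in \stdCone,
\]
so $\eig{d}{\stdCone}(y) \geq s - \tau = \eig{d}{\stdCone}(x) - \norm{x-y}/r$. Swapping the roles of $x$ and $y$ gives $|\eig{d}{\stdCone}(x) - \eig{d}{\stdCone}(y)| \leq \norm{x-y}/r$, proving Lipschitz continuity with constant $1/r$.

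The main subtlety is the Lipschitz step, specifically the need to work inside $\spanVec \stdCone$ when choosing $r$: the ball around $d$ that certifies $d \in \reInt \stdCone$ must be intersected with the affine hull, and one must check that the perturbation $(y-x)/\tau$ lies in $\spanVec \stdCone$, which is immediate since $x, y$ do. Once the correct $r$ is isolated, the algebraic manipulation is straightforward.
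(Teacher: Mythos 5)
The paper does not actually prove this proposition; it is quoted from Renegar (Proposition 2.1 of \cite{RE16}), so there is no internal proof to compare against. Your argument is the standard one and is essentially correct: concavity from $x-\eig{d}{\stdCone}(x)d\in\stdCone$, $y-\eig{d}{\stdCone}(y)d\in\stdCone$ and convexity of $\stdCone$; Lipschitz continuity from a relative ball of radius $r$ around $d$ inside $\stdCone$. Two small points. First, your displayed identity in the Lipschitz step has a sign slip: $(x-sd)+(\tau d-(y-x))$ equals $2x-y+(\tau-s)d$, not $y-(s-\tau)d$. You should instead use $d+(y-x)/\tau\in\stdCone$ (equally valid, since the relative ball is symmetric), giving $y-(s-\tau)d=(x-sd)+\bigl(\tau d+(y-x)\bigr)\in\stdCone$; the conclusion and the constant $1/r$ are unaffected. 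Second, you never justify that the interval $\{t\mid x-td\in\stdCone\}$ is bounded above, which is what makes $\eig{d}{\stdCone}$ real-valued and the ``max'' legitimate; this is exactly where pointedness enters (if $x-td\in\stdCone$ for all $t>0$, dividing by $t$ and letting $t\to\infty$ gives $-d\in\stdCone$ by closedness, so $d\in\lineality\stdCone=\{0\}$, contradicting $d\in\reInt\stdCone$ for a nontrivial cone). With these repairs the proof is complete.
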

The Lipschitz continuity of $\eig{d}{\stdCone}(\cdot)$ is important because it implies that it is reasonable to use $\eig{d}{\stdCone}(x)$ as an indirect way of measuring $\dist(x,\stdCone)$.  This idea is expressed in the next proposition.

\begin{proposition}\label{prop:sur}
	Let $d \in \reInt \stdCone$. There are positive constants $\kappa _1$ and $\kappa _2$ such that 
	$$
	\kappa _1  \max(-\eig{d}{\stdCone}(x),0) \leq \dist(x,\stdCone) \leq \kappa _2 \max(-\eig{d}{\stdCone}(x),0), \quad \forall x \in  \spanVec \stdCone. 
	$$
\end{proposition}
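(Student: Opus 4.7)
The plan is to observe that both inequalities are trivial on $\stdCone$ and to reduce everything to the two facts already established: that $x - \eig{d}{\stdCone}(x) d \in \stdCone$ for $x \in \spanVec \stdCone$, and that $\eig{d}{\stdCone}$ is Lipschitz continuous on $\spanVec \stdCone$. Throughout, I write $\lambda(x) := \eig{d}{\stdCone}(x)$ for brevity and note that $\lambda(x) \geq 0$ exactly when $x \in \stdCone$.

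First, I would dispose of the case $x \in \stdCone$: here $\dist(x,\stdCone) = 0$ and $\max(-\lambda(x),0) = 0$ since $\lambda(x) \geq 0$, so both bounds hold trivially. So the substantive case is $x \in \spanVec \stdCone \setminus \stdCone$, where $\lambda(x) < 0$ and $\max(-\lambda(x),0) = -\lambda(x) > 0$.

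For the upper bound I would simply take the candidate point $y := x - \lambda(x) d$, which lies in $\stdCone$ by the property recalled after \eqref{eq:dist_def}. Since
\[
\dist(x,\stdCone) \leq \norm{x - y} = |\lambda(x)| \, \norm{d} = \norm{d} \cdot \max(-\lambda(x),0),
\]
the constant $\kappa_2 := \norm{d}$ works. For the lower bound I would use the Lipschitz continuity of $\lambda$ on $\spanVec \stdCone$: let $L$ be a Lipschitz constant from the preceding proposition. For every $y \in \stdCone \subseteq \spanVec \stdCone$ we have $\lambda(y) \geq 0$, hence
\[
-\lambda(x) \leq \lambda(y) - \lambda(x) \leq |\lambda(y) - \lambda(x)| \leq L \norm{x - y}.
\]
Taking the infimum over $y \in \stdCone$ yields $-\lambda(x) \leq L \, \dist(x,\stdCone)$, so $\kappa_1 := 1/L$ does the job.

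There is really no serious obstacle, because the two inputs (existence of the ``certificate'' $x - \lambda(x) d \in \stdCone$ and Lipschitzness of $\lambda$ on $\spanVec \stdCone$) are already provided. The only minor care point is making sure that in the lower bound one ranges over $y \in \stdCone$ (not just $\spanVec \stdCone$) so that $\lambda(y) \geq 0$ can be used; this is automatic because $\stdCone \subseteq \spanVec \stdCone$ and the infimum defining $\dist(x,\stdCone)$ is over $\stdCone$ itself.
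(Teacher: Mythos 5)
Your proof is correct and follows essentially the same route as the paper: the certificate $x-\eig{d}{\stdCone}(x)d\in\stdCone$ gives $\kappa_2=\norm{d}$, and Lipschitz continuity of $\eig{d}{\stdCone}$ on $\spanVec\stdCone$ gives $\kappa_1$. The only (harmless) difference is in the lower bound, where you take the infimum over all $y\in\stdCone$ using $\eig{d}{\stdCone}(y)\geq 0$, whereas the paper picks the nearest point $v$ and uses $\eig{d}{\stdCone}(v)=0$; your variant even avoids the small step of arguing that $v$ lies on the relative boundary.
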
 
\begin{proof}	
	If $x \in \stdCone$, then we have $\eig{d}{\stdCone}(x) \geq 0$ and $ \dist(x,\stdCone) = 0$, so we are done. 
	Suppose that $x\not \in \stdCone$.
	Since 	$x - \eig{d}{\stdCone}(x)d \in \stdCone$, we have 
	$$
	\dist(x,\stdCone) \leq \norm{x - (x- \eig{d}{\stdCone}(x)d)}= -\eig{d}{\stdCone}(x)\norm{d }. 
	$$
	Let $v \in \stdCone$ be such that $\dist(x,\stdCone) = \norm{x-v}$. Since $x \not \in \stdCone$, $v$ belongs to the relative boundary of $\stdCone$, so that $$\eig{d}{\stdCone}(v) = 0, \quad \eig{d}{\stdCone}(x)<0.$$  Using the Lipschitz continuity of $\eig{d}{\stdCone}(\cdot)$, there is some $\tilde \kappa $ such that 
	$$
	-\eig{d}{\stdCone}(x) = \abs{\eig{d}{\stdCone}(x) - \eig{d}{\stdCone}(v)} \leq \norm{x-v}\tilde \kappa = \dist(x,\stdCone) \tilde \kappa, 
	$$
	for all $x \in \spanVec \stdCone$.
	We conclude that the proposition holds with $\kappa _1= 1/{\tilde{\kappa}}$ and $\kappa _2 = \norm{d}$.
\end{proof}

\section{Amenable cones and {\FRFs}}\label{sec:am}
In this section, we introduce the two notions that are the cornerstones of this work: \emph{amenable cones} and \emph{\FRFs}.%
\subsection{Amenable cones}
\begin{definition}\label{def:beaut}
	A closed convex cone $\stdCone$ is said to be \emph{amenable} 
	 if for every 
	face $\stdFace \face \stdCone$ there is a positive constant $\kappa$ such that 
\begin{equation}\label{eq:def_am}
	\dist (x,\stdFace) \leq \kappa \dist (x, \stdCone),\qquad \forall x \in \spanVec \stdFace.
\end{equation}
\end{definition}

Next, we prove that a few common cones are amenable. 
The proof that symmetric cones are amenable will be deferred to Proposition \ref{prop:sym_am}.
We recall that a pointed cone $\stdCone$ is said to be \emph{strictly convex}
if the only faces besides $\stdCone$ and $\{0\}$ are extreme rays 
(i.e., one dimensional faces). Also, $\stdCone$ is said to be \emph{projectionally exposed}
if for every face $\stdFace \face \stdCone$ there is a projection (not necessarily orthogonal) $\stdProj$ 
such that $\stdProj(\stdCone) = \stdFace$, see \cite{ST90} by Sung and Tam.
Here, we remind that a projection is a linear map $\stdProj:\jAlg \to \jAlg$ satisfying $P^2 = P$. 
If for every face $\stdFace \face \stdCone$ there is an \emph{orthogonal} projection $\stdProj$ such that $\stdProj(\stdCone) = \stdFace$, then $\stdCone$ is said to be \emph{orthogonal projectionally exposed}. 
\begin{proposition}\label{prop:beaut}
	The following cones are amenable.
	\begin{enumerate}[label=$(\roman*)$]
		\item Projectionally exposed cones. In particular, if $\stdFace \face \stdCone$ and $\stdProj$ is a projection satisfying $\stdProj(\stdCone) = \stdFace$, then \eqref{eq:def_am} is satisfied with $\kappa = \norm{\stdProj}$. 
		\item Polyhedral cones.
		\item Strictly convex cones.
	\end{enumerate}	
\end{proposition}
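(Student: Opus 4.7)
The plan is to handle the three classes of cones separately, since each invokes a different structural feature.

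For (i), I would exploit the defining projection. Given $\stdFace \face \stdCone$ with projection $\stdProj$ satisfying $\stdProj(\stdCone)=\stdFace$, first observe that $\stdFace\subseteq\matRange \stdProj$, and since $\matRange\stdProj$ is a linear subspace, in fact $\spanVec \stdFace\subseteq\matRange\stdProj$. Because a projection fixes its range pointwise ($\stdProj^2=\stdProj$), we get $\stdProj(x)=x$ for every $x\in\spanVec \stdFace$. Then, for any $x\in\spanVec\stdFace$ and any $y\in\stdCone$, $\stdProj(y)\in\stdFace$ and
\[
\dist(x,\stdFace)\le \norm{x-\stdProj(y)}=\norm{\stdProj(x-y)}\le \norm{\stdProj}\,\norm{x-y}.
\]
Taking the infimum over $y\in\stdCone$ yields \eqref{eq:def_am} with $\kappa=\norm{\stdProj}$.

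For (ii), I would use Hoffman's lemma directly. Recall that for any face of a convex cone, $\stdFace=\stdCone\cap\spanVec\stdFace$. When $\stdCone$ is polyhedral both $\stdCone$ and $\spanVec\stdFace$ are polyhedral, and their intersection is nonempty, so Theorem \ref{theo:hoff} supplies $\kappa>0$ with
\[
\dist(x,\stdFace)\le \kappa\bigl(\dist(x,\stdCone)+\dist(x,\spanVec\stdFace)\bigr),\quad \forall x\in\jAlg.
\]
Restricting to $x\in\spanVec\stdFace$ kills the second term and gives \eqref{eq:def_am}.

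For (iii), I would enumerate the three possibilities allowed by strict convexity: $\stdFace=\stdCone$ (trivial with $\kappa=1$), $\stdFace=\{0\}$ (trivial, since $\spanVec\stdFace=\{0\}$), or $\stdFace$ a one-dimensional extreme ray. In the last case, pick a unit $d\in\stdFace$, so $\stdFace=\Re_+ d$ and every $x\in\spanVec\stdFace$ has the form $x=td$ for $t\in\Re$. For $t\ge 0$ both distances vanish, and for $t<0$ one has $\dist(x,\stdFace)=|t|$. By Assumption \ref{asmp:1}, $\stdCone$ is pointed, so $-d\notin\stdCone$, and since $\stdCone$ is closed, $c\coloneq\dist(-d,\stdCone)>0$. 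By the positive homogeneity of $\stdCone$, $\dist(td,\stdCone)=|t|\,c$, which gives \eqref{eq:def_am} with $\kappa=1/c$.

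The only subtle point is in (i), where one must verify that the projection actually fixes the whole of $\spanVec\stdFace$ and not merely $\stdFace$; once that linear-algebraic observation is in hand, the rest is a single-line estimate. The strictly convex case is also minor but relies crucially on pointedness to ensure $c>0$.
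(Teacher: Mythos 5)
Your proof is correct and follows essentially the same route as the paper's: the projection argument in (i), with the observation that $\stdProj$ fixes $\spanVec\stdFace$ pointwise because $\spanVec\stdFace\subseteq\matRange\stdProj$, and the extreme-ray computation in (iii), relying on pointedness to get $\dist(-d,\stdCone)>0$, match the paper's proof step for step. The only (harmless) deviation is in (ii), where you apply Hoffman's Lemma to $\stdFace=\stdCone\cap\spanVec\stdFace$ instead of to $\stdCone\cap\{z\}^\perp$ for an exposing vector $z$ as the paper does; this sidesteps the fact that faces of polyhedral cones are exposed, but the key tool and the final estimate are identical.
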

\begin{proof}
	\begin{enumerate}[label=$(\roman*)$]
		\item Let $\stdFace \face \stdCone$ and $\stdProj:\jAlg\to \jAlg$ be a projection 
		map such that $\stdProj(\stdCone) = \stdFace$. Then, we have 
		$\stdProj(\spanVec \stdCone) = \spanVec \stdFace$. 
		Furthermore, since 	$\stdProj^2 = \stdProj$, we have $P(\spanVec \stdFace) = \spanVec \stdFace$.
		
		Now, let $x \in \spanVec \stdFace$ and let $y \in \stdCone$ be such that 
		$\dist(x, \stdCone) = \norm{x-y}$. Then, since $\stdProj(y) \in \stdFace$ and 
		$\stdProj(x) = x$, we have
		$$
		\dist(x,\stdFace) \leq \norm{x-\stdProj(y)} = \norm{\stdProj(x) - \stdProj(y)} \leq 
		\norm{\stdProj}\norm{x-y},
		$$
		where $\norm{\stdProj}$ is the operator norm of $\stdProj$.  
		This shows that 		\eqref{eq:def_am} is satisfied with $\kappa = \norm{\stdProj}$.
		\item Let $\stdCone$ be a polyhedral cone and 
		$\stdFace$ be a face of $\stdCone$. 
		Since $\stdCone$ is polyhedral, $\stdFace$ must be an exposed face (e.g., Corollary 2 in \cite{TM76}), therefore there exists $z \in \stdCone^*$ such that 
		$$
		\stdFace = \stdCone \cap \{z\}^\perp.
		$$
		By Hoffman's Lemma (Theorem \ref{theo:hoff}), there is a 
		positive constant $\kappa$ such that 
		$$
		\dist(x, \stdFace) \leq \kappa \dist(x, \stdCone) + \kappa \dist(x, \{z\}^\perp).		
		$$

		We now observe that if $x \in \spanVec \stdFace$ then $x \in \{z\}^\perp$. Therefore,
		\begin{align*}
		\dist(x,\stdFace) &\leq \kappa\dist(x,\stdCone),\quad \forall x \in \spanVec \stdFace.
		\end{align*}
		
		\item Let $\stdCone$ be a strictly convex cone and let $\stdFace$ be a proper face of $\stdCone$.
		If $\stdFace = \{0\}$, since $\spanVec \stdFace = \{0\}$, it is enough to take $\kappa = 1$. 
		
		We move on to the case where $\stdFace = \{\alpha v \mid \alpha \geq 0\}$, for some nonzero $v \in \stdCone$. We  assume, without loss of generality, that $\norm{v} = 1$. 
		Then $\spanVec \stdFace = \{\alpha v \mid \alpha \in \Re \}$. Note that if $\alpha \geq 0$, we have
		$$
		\dist(\alpha v, \stdFace) = \dist(\alpha v, \stdCone) = 0.
		$$
		So suppose that $\alpha < 0$. Let $u \in \stdCone$ be such that 
		$\dist(-v,\stdCone) = \norm{u+v} $. We have
		\begin{align*}
		\dist(\alpha v, \stdFace)    = -\alpha, \qquad 		\dist(\alpha v, \stdCone)  = -\alpha \norm{u+v}. 
		\end{align*}
		It follows that for every $x \in \spanVec \stdFace$, we have
		$$
		\dist(x, \stdFace)   \leq \frac{1}{\norm{u+v}}	\dist(x,\stdCone).
		$$
		We remark that, since $\stdCone$ is pointed, $-v \not \in \stdCone$, so $\norm{u+v} > 0$.
	\end{enumerate}
\end{proof}
\begin{remark}\label{rem:const}
In Definition~\ref{def:beaut}, the constant $\kappa$ may depend on $\stdFace$. 
Nevertheless, there are 	cones that admit a finite ``universal'' constant $\kappa_{\stdCone}$ depending only on $\stdCone$ and such that \eqref{eq:def_am} holds for all faces. For example, we will see in Proposition~{\ref{prop:sym_am}} that $\kappa_{\stdCone} = 1$ is enough for symmetric cones. Also, if $\stdCone$ is polyhedral, since the number of faces is finite,  we may pick a constant $\kappa _{\stdFace}$ for each face and let   $\kappa_{\stdCone}$ be the maximum among the $\kappa _{\stdFace}$. Finally, if $\stdCone$ is a pointed strictly convex cone, the proof of item~$(iii)$ of Proposition~\ref{prop:beaut} shows that we may take
$$
\kappa _{\stdCone} = \sup _{v \in \stdCone, \norm{v} = 1} \dist(-v,\stdCone)^{-1} $$
as a universal constant for $\stdCone$. Since $\dist(\cdot, \stdCone)$ is a continuous function, 
$ \kappa _{\stdCone}$ must be finite.

However, if $\stdCone$ is a projectionally exposed cone, the proof of item $(i)$ only shows that 
the constant $\kappa$ can be taken to be the operator norm of the projection $\mathcal{P}$.  Because 
 $\mathcal{P}$ is not necessarily orthogonal, $\norm{\mathcal{P}}$ can be quite large and we do not known whether a finite universal constant exists.
More generally, it is not known whether arbitrary amenable cones admit finite universal constants.  
\end{remark}
We note that amenability is preserved by simple operations, see Appendix~\ref{app:proof} for proofs.
\begin{proposition}[Preservation of amenability]\label{prop:beaut_prev}
The following hold.
\begin{enumerate}[label=$(\roman*)$]
	\item If $\stdCone^1$ and $\stdCone^2$ are amenable cones then 
	$\stdCone^1 \times \stdCone^2$ is amenable.
	\item  If $\stdCone$ is amenable and $\stdMap$ is an injective linear map, then $\stdMap(\stdCone)$ is amenable.
\end{enumerate}	
\end{proposition}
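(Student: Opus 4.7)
For part (i), the plan is to exploit the standard characterization that every face of $\stdCone^1 \times \stdCone^2$ is a product $\stdFace^1 \times \stdFace^2$ with $\stdFace^i \face \stdCone^i$. Given $\stdFace \face \stdCone^1 \times \stdCone^2$ and $(x_1, x_2) \in \stdFace$, the decomposition $(x_1, x_2) = (x_1, 0) + (0, x_2) \in \stdCone^1 \times \stdCone^2$ together with the face property forces $(x_1, 0), (0, x_2) \in \stdFace$; this lets one read off factors $\stdFace^i \face \stdCone^i$ and see $\stdFace = \stdFace^1 \times \stdFace^2$, hence $\spanVec \stdFace = \spanVec \stdFace^1 \times \spanVec \stdFace^2$. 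Applying \eqref{eq:prod_proj} to both $\stdFace$ and $\stdCone^1 \times \stdCone^2$, together with amenability constants $\kappa_1, \kappa_2$ of the factors, then yields
$$\dist((x_1, x_2), \stdFace)^2 \leq \kappa_1^2\, \dist(x_1, \stdCone^1)^2 + \kappa_2^2\, \dist(x_2, \stdCone^2)^2 \leq \max(\kappa_1, \kappa_2)^2\, \dist((x_1, x_2), \stdCone^1 \times \stdCone^2)^2,$$
giving amenability of the product with constant $\max(\kappa_1, \kappa_2)$.

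For part (ii), the idea is to pull each face $\stdFace \face \stdMap(\stdCone)$ back to $\hat{\stdFace} := \stdMap^{-1}(\stdFace) \cap \stdCone$, invoke amenability of $\stdCone$ at $\hat{\stdFace}$, and convert between source and image distances using the bilipschitz behaviour of $\stdMap$. A short check (if $u + v \in \hat{\stdFace}$ with $u, v \in \stdCone$, then $\stdMap(u) + \stdMap(v) \in \stdFace$, so the face property of $\stdFace$ forces $\stdMap(u), \stdMap(v) \in \stdFace$) shows $\hat{\stdFace} \face \stdCone$, and since each element of $\stdFace \subseteq \stdMap(\stdCone)$ already has a preimage in $\hat{\stdFace}$, one obtains $\stdMap(\hat{\stdFace}) = \stdFace$ and $\spanVec \stdFace = \stdMap(\spanVec \hat{\stdFace})$. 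For $y \in \spanVec \stdFace$, injectivity of $\stdMap$ produces a unique $x \in \spanVec \hat{\stdFace}$ with $y = \stdMap(x)$, and parametrizing $\stdFace$ as $\stdMap(\hat{\stdFace})$ yields
$$\dist(y, \stdFace) = \inf_{x' \in \hat{\stdFace}} \|\stdMap(x - x')\| \leq \|\stdMap\|\, \dist(x, \hat{\stdFace}) \leq \|\stdMap\|\, \kappa'\, \dist(x, \stdCone),$$
where the last inequality uses the amenability constant $\kappa'$ of $\stdCone$ at $\hat{\stdFace}$.

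To close the argument, I would use finite-dimensional injectivity of $\stdMap$ to find $\sigma > 0$ with $\|\stdMap(v)\| \geq \sigma \|v\|$ for all $v \in \jAlg$. Taking the infimum over $x' \in \stdCone$ of the pointwise inequality $\|x - x'\| \leq \sigma^{-1} \|\stdMap(x - x')\|$ gives $\dist(x, \stdCone) \leq \sigma^{-1} \dist(y, \stdMap(\stdCone))$, and chaining with the previous display produces the amenability bound with constant $\kappa = \|\stdMap\| \kappa' \sigma^{-1}$. The subtle point in (ii) is noticing that the two appearances of $\stdMap$ play asymmetric roles: the upper bound $\|\stdMap\|$ is automatic from linearity, but the lower bound $\sigma > 0$ is precisely where injectivity enters—the face correspondence $\stdFace \leftrightarrow \hat{\stdFace}$ itself does not use it.
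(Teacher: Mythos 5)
Your proof is correct and follows essentially the same route as the paper: part (i) rests on the product-face decomposition together with \eqref{eq:prod_proj} and the constant $\max(\kappa_1,\kappa_2)$, and part (ii) is the same bilipschitz argument, with $\norm{\stdMap}$ and your lower bound $\sigma$ playing the roles of the paper's $\sigma_{\max}$ and $\sigma_{\min}$. The only difference is that you additionally verify the face-correspondence facts (faces of a product are products of faces; faces of $\stdMap(\stdCone)$ are images under $\stdMap$ of faces of $\stdCone$) that the paper invokes without proof, which is a harmless addition.
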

Next, we  examine the connection between amenability and related concepts.
Two sets $S_1,S_2$ are said to have \emph{subtransversal intersection at $\overline{x} \in S_1 \cap S_2$} if there exists a positive $\kappa$ and a neighbourhood $V$ of $ \overline{x}$ such that 
\begin{equation}
\dist(x, S_1\cap S_2) \leq \kappa(\dist(x,S_1)+\dist(x,S_2)), \quad \forall x \in V.\label{eq:subt}
\end{equation} 
Subtransversality is discussed extensively in Ioffe's book \cite{Io17}, see Chapter~7 and Definition 7.5 therein. 
Another related concept is \emph{bounded linear regularity}.  The sets $S_1,S_2$ are said to be boundedly linearly regular, if $S_1 \cap S_2 \neq \emptyset$ and for every bounded set $B\subseteq \jAlg$ there exists $\kappa _B$ such that 
$$\dist(x,S_1\cap S_2) \leq \kappa _{B}\max(\dist(x,S_1),\dist(x,S_1)), \quad \forall x \in B.$$
See, for example, the work by Bauschke, Borwein and Li \cite{BBL99}. Now, recalling that $\stdFace = \stdCone \cap \spanVec \stdFace$ holds for every face $\stdFace \face \stdCone$, we obtain the following proposition, see Appendix~\ref{app:proof} for the proof.
\begin{proposition}\label{prop:am_others}
Let $\stdCone$ be a closed convex cone and $\stdFace \face \stdCone$. The following are equivalent.
\begin{enumerate}[label=$(\roman*)$]
	\item There exists a positive constant $\kappa$ such that $\dist (x,\stdFace) \leq \kappa \dist (x, \stdCone)$ holds for every $x \in \spanVec \stdFace$.
	\item $\stdCone$ and $\spanVec \stdFace$ intersect at $0$ subtransversally.
	\item $\stdCone$ and $\spanVec \stdFace$ are boundedly linearly regular. 
\end{enumerate}	
\end{proposition}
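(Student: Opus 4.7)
The plan is to prove the cycle (i) $\Rightarrow$ (iii) $\Rightarrow$ (ii) $\Rightarrow$ (i), using the identity $\stdFace = \stdCone \cap \spanVec \stdFace$ (which holds because $\stdFace$ is a face of the cone $\stdCone$) as the central structural fact, plus the conic scaling invariance of the relevant distance functions.

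For (i) $\Rightarrow$ (iii), I would take an arbitrary $x \in \jAlg$ and introduce its orthogonal projection $y$ onto $\spanVec \stdFace$, so that $\norm{x-y} = \dist(x,\spanVec\stdFace)$. Since $y \in \spanVec\stdFace$, hypothesis (i) applies at $y$, giving $\dist(y,\stdFace) \leq \kappa\,\dist(y,\stdCone)$. A triangle inequality then controls $\dist(y,\stdCone)$ by $\dist(x,\spanVec\stdFace) + \dist(x,\stdCone)$, and another triangle inequality bounds $\dist(x,\stdFace)$ by $\dist(x,\spanVec\stdFace) + \dist(y,\stdFace)$. Combining the two gives a \emph{global} Lipschitzian bound of the form $\dist(x,\stdFace) \leq (1+\kappa)\dist(x,\spanVec\stdFace) + \kappa\,\dist(x,\stdCone)$, which is strictly stronger than bounded linear regularity. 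Then (iii) $\Rightarrow$ (ii) is trivial: restrict the bounded linear regularity estimate to any ball around $0$ to obtain the subtransversality inequality \eqref{eq:subt}.

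The interesting direction is (ii) $\Rightarrow$ (i). From subtransversality at $0$ we obtain a neighbourhood $V$ of $0$ and $\kappa > 0$ with
\begin{equation*}
\dist(x,\stdFace) \leq \kappa\bigl(\dist(x,\stdCone) + \dist(x,\spanVec\stdFace)\bigr), \qquad \forall x \in V.
\end{equation*}
Restricting to $x \in \spanVec\stdFace$ kills the second term, so the bound $\dist(x,\stdFace) \leq \kappa\,\dist(x,\stdCone)$ holds on $V \cap \spanVec\stdFace$. To globalise, I would use the fact that $\stdCone$, $\stdFace$ and $\spanVec\stdFace$ are all cones, hence both $\dist(\cdot,\stdCone)$ and $\dist(\cdot,\stdFace)$ are positively homogeneous. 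Given any nonzero $x \in \spanVec\stdFace$, pick $\alpha > 0$ small enough that $\alpha x \in V$; applying the local bound to $\alpha x$ and dividing by $\alpha$ recovers (i).

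The main (mild) obstacle is the last globalisation step: one must check that the relevant distance functions are genuinely positively homogeneous in the ambient norm, which follows immediately from $t \cdot \stdCone = \stdCone$ and $t \cdot \stdFace = \stdFace$ for $t > 0$. Everything else is standard triangle-inequality bookkeeping, so no separation theorem or additional structural hypothesis on $\stdCone$ (such as niceness or facial exposedness) is needed.
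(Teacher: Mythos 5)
Your proposal is correct and follows essentially the same route as the paper: the same triangle-inequality bootstrapping (via the nearest point of $\spanVec \stdFace$) upgrades the hypothesis in (i) to the global Lipschitzian bound $\dist(x,\stdFace) \leq (1+\kappa)(\dist(x,\stdCone)+\dist(x,\spanVec\stdFace))$, which yields both (ii) and (iii), and the same positive-homogeneity rescaling globalises the local subtransversality inequality for the converse. The only difference is cosmetic — you organise the implications as a single cycle rather than as the pairwise implications in the paper.
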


The overall conclusion is that	$\stdCone$ is amenable if and only if every face $\stdFace \face \stdCone$ is such that $\stdCone$ and $\spanVec \stdFace$ are boundedly linearly regular or intersect subtransversally at the origin.


This is good news because, thanks to Theorem~10 in \cite{BBL99}, it turns out that  two (not necessarily pointed) convex cones $\stdCone_1,\stdCone_2$ are boundedly linearly regular if and only if 
$-\stdCone_1^* -\stdCone_2^*$ is closed and there is $\alpha > 0$ such that 
\begin{equation}\label{eq:prop_g}
U \cap (-\stdCone_1^* -\stdCone_2^*) \subseteq \alpha( (-\stdCone_1^*\cap U) + (-\stdCone_2^*\cap U) ),
\end{equation}
where $U = \{x \in \jAlg \mid \norm{x} \leq 1\}$ is the unit ball. This leads us to the following  result.
\begin{proposition}\label{prop:am_nice}
Amenable cones are nice and, in particular, are facially exposed.
\end{proposition}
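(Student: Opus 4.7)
The plan is to chain together three facts already set up by the surrounding discussion. First, by Proposition~\ref{prop:am_others}, amenability of $\stdCone$ is equivalent to the statement that for every face $\stdFace \face \stdCone$, the cone $\stdCone$ and the subspace $\spanVec \stdFace$ are boundedly linearly regular. Second, by Theorem~10 of \cite{BBL99} (quoted just before the proposition), bounded linear regularity of two convex cones $\stdCone_1,\stdCone_2$ forces $-\stdCone_1^* - \stdCone_2^*$ to be closed. Applying this with $\stdCone_1 = \stdCone$ and $\stdCone_2 = \spanVec \stdFace$ yields that $-\stdCone^* - (\spanVec \stdFace)^*$ is closed.

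Next I would simplify the second summand: since $\spanVec \stdFace$ is a subspace, its dual cone coincides with its orthogonal complement, and the orthogonal complement of a set equals that of its linear span, so $(\spanVec \stdFace)^* = (\spanVec \stdFace)^\perp = \stdFace^\perp$. Because $\stdFace^\perp$ is a subspace one has $-\stdFace^\perp = \stdFace^\perp$, and consequently closedness of $-\stdCone^* - \stdFace^\perp$ is the same as closedness of $\stdCone^* + \stdFace^\perp$ (the two sets differ by a sign). Since $\stdFace \face \stdCone$ was arbitrary, this is precisely the defining condition of niceness from Section~\ref{sec:conv}, establishing the first half of the proposition.

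For the ``in particular'' part I would simply invoke the known fact that every nice cone is facially exposed; this is standard and is proved, for example, in \cite{pataki_strong_2013} (which is already cited earlier in connection with the facial structure of nice cones). No honest obstacle appears in the argument: all the heavy lifting was done in Proposition~\ref{prop:am_others} and in the cited characterization of bounded linear regularity by Bauschke--Borwein--Li, so what remains is essentially unpacking notation (the dual of a subspace, the symmetry of subspaces under negation) together with one invocation of an already-known implication.
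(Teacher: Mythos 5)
Your argument is correct and follows essentially the same route as the paper: bounded linear regularity of $\stdCone$ and $\spanVec\stdFace$ (via Proposition~\ref{prop:am_others}), Theorem~10 of \cite{BBL99} to get closedness of $-\stdCone^* + \stdFace^\perp$, hence of $\stdCone^* + \stdFace^\perp$, and then the standard fact that nice cones are facially exposed (the paper cites Theorem~3 of Pataki's \cite{P13} for this last step). No gaps.
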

\begin{proof}
Let $\stdFace \face \stdCone$ be a face of an amenable cone $\stdCone$.
By the preceding discussion, $\stdCone$ and $\spanVec \stdFace$ are boundedly linearly regular, so Theorem~10 in \cite{BBL99} implies that $-\stdCone^* + \stdFace^{\perp}$ is closed. Therefore, 
$\stdCone^* + \stdFace^{\perp}$ is closed and $\stdCone$ must be nice.
To conclude, we recall that Pataki proved in Theorem~3 of \cite{P13} that all nice cones are facially exposed. 
\end{proof}
\begin{remark}
Propositions~\ref{prop:beaut} and \ref{prop:am_nice} together imply that projectionally exposed cones are nice. This has been proved earlier by  Permenter \cite{Per16}.

We do not know whether nice cones must necessarily be amenable. At this moment, this seems unlikely because it would imply that the condition given in \eqref{eq:prop_g} (also called \emph{property $(G)$} in \cite{BBL99}) is somehow superfluous when $\stdCone_1=\stdCone$, $\stdCone_2 = \spanVec \stdFace$ and $\stdFace \face \stdCone$. Nevertheless, a nice but not amenable cone remains to be found.
\end{remark}

Recently, Roshchina and Tun\c{c}el introduced 
the concept of \emph{tangentionally exposed cones} in \cite{RT17} and they
showed that nice cones are always tangentially exposed, although the converse does not hold in general, see Example~2 in \cite{RT17}. 
As amenable cones are nice, they must be tangentially exposed as well.

We conclude this section by showing how amenability might break down.
\begin{example}[A non-amenable cone, Figure~\ref{fig:non_bt}]
	Let $C \subseteq \Re^2$ be the smallest closed convex set containing
	$$
	\{(x,x^2) \in \Re^2 \mid 0 \leq x\leq 1 \} \cup \{(x,0) \in \Re^2 \mid -1 \leq x \leq 0 \}.
	$$	
	Let $\stdCone \subseteq \Re^3$ be the smallest closed convex cone containing $C\times \{1\}$. 
	As seen in Figure~\ref{fig:non_bt}, $\stdCone$ is not facially exposed, so $\stdCone$ cannot be amenable, because of Proposition~\ref{prop:am_nice}.
	Nevertheless, we will check precisely why the amenability condition fails.

	Let $\hat C = \{(x,0)  \mid -1 \leq x \leq 0 \}$ and $\stdFace$ be the smallest closed convex cone containing 
	$\hat C \times \{1\}$.	 Since $\hat C$ is a face of $C$, $\stdFace$ is  a face of $\stdCone$. We have
	\begin{align*}
	\stdFace &= \{(x,0,z) \in \Re^3 \mid 0 \leq -x \leq z \}
	\end{align*}		
	Now, let $x \in (0,1]$. We consider the
	point $(x,0,1) \in \spanVec \stdFace$. 
	The projection of $(x,0,1)$ on $\stdFace$ is $(0,0,1)$. Therefore,
	$\dist((x,0,1), \stdFace) = x$. 
	However, 
	$$\dist((x,0,1),\stdCone) \leq \norm{(x,0,1) - (x,x^2,1) }   = x^2.$$
	Therefore, the quotient $\dist((x,0,1),\stdFace)/ \dist((x,0,1),\stdCone)$ gets unbounded as $x$ goes to zero, thus showing that Definition \ref{def:beaut} can never be satisfied for any positive constant $\kappa$. 
	
	\begin{figure}
		\centering\includegraphics[scale=0.6]{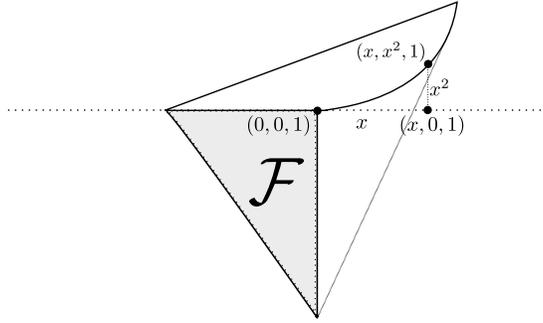}\caption{A cone that is not amenable.}\label{fig:non_bt}
	\end{figure}
	
\end{example}

\subsection{Facial residual functions}
Let $\stdFace$ be a face of $\stdCone$, $z \in \stdFace^*$ and $\hat \stdFace = \stdFace \cap \{z\}^\perp$.
The motivation for 
the definition of \emph{\FRFs} comes from the fact that if for some $x$ we have
$$
\dist(x,\stdCone) = \inProd{x}{z} = \dist(x,\spanVec \stdFace) = 0
$$
then it must be the case that $x \in \hat \stdFace $. This is because for any face $\stdFace \face \stdCone$ we have $\stdFace = \stdCone \cap \spanVec \stdFace$.
If $x$ almost satisfies the equations above, we would hope that the distance between $x$ and $\hat \stdFace$ would also be small. Unfortunately, that is not what happens in general and we usually have to take into account the norm of $x$. 
Accordingly, we settle for the less ambitious goal that  $\dist(x,\stdCone)$ should be bounded by some function $\psi_{\stdFace,z}$ that also depends on the norm of $x$. However, this dependency is not completely arbitrary and we require $\psi_{\stdFace,z}$ to be zero if $x$ belongs to $\hat \stdFace$.

\begin{definition}[Facial residual functions]\label{def:ebtp}
	Let $\stdCone$ be a closed convex cone and $\stdFace$ a face of $\stdCone$.
	Let $z \in \stdFace^*$ and $\hat \stdFace = \stdFace \cap \{z\}^{\perp}$.
	Suppose that $\psi_{\stdFace,z} : \Re _+ \times \Re _+ \to \Re_+$ satisfies the following properties:
	\begin{enumerate}[label=({\it \roman*})]
		\item $\psi_{\stdFace,z}$ is nonnegative, monotone nondecreasing in each argument and $\psi(0,\alpha) = 0$ for every $\alpha \in \Re_+$.
		\item whenever $x \in \spanVec \stdCone$ satisfies  the inequalities
		$$
		\dist(x,\stdCone) \leq \epsilon, \quad \inProd{x}{z} \leq \epsilon, \quad \dist(x, \spanVec \stdFace ) \leq \epsilon
		$$
		we have:
		$$
		\dist(x,  \hat \stdFace)  \leq 
		\psi_{\stdFace,z} (\epsilon, \norm{x}).
		$$	
	\end{enumerate}
	Then, $\psi_{\stdFace,z}$ is said to be \emph{{\FRF} (\FRFa) for $\stdFace$ and $z$}. 
\end{definition}

It not obvious whether {\FRFs} always exist, so will now 
take a look at this issue.  
 Let $\bar \psi _{\stdFace,z}(\epsilon,\norm{x})$ be the optimal value of the following problem.
\begin{align}
\underset{v \in \spanVec \stdCone}{\sup} & \quad \dist(v,  \hat \stdFace) \label{eq:can}\tag{P}\\ 
\mbox{subject to} & \quad \dist(v,\stdCone) \leq \epsilon \nonumber \\ 
&\quad \dist(v, \spanVec \stdFace ) \leq \epsilon \nonumber\\
&\quad \inProd{v}{z} \leq \epsilon \nonumber\\
&\quad \norm{v} \leq \norm{x}\nonumber
\end{align}
The functions $\dist(\cdot,\stdCone)$ and $\dist(\cdot,\spanVec \stdFace) $ are continuous convex functions. 
Since $x$ is fixed in \eqref{eq:can}, the feasible region of \eqref{eq:can} is a compact convex set, due to the presence of the constraint ``$\norm{v} \leq \norm{x}$''.
In particular, $\bar \psi _{\stdFace,z}(\epsilon,\norm{x})$ is finite and nonnegative. 
Furthermore, increasing either $\epsilon$ or $\norm{x}$ enlarges the feasible region, so that  $\bar \psi _{\stdFace,z} (\cdot,\cdot)$ is monotone nondecreasing in each argument. 
If $\epsilon = 0$ and $v$ is feasible for \eqref{eq:can} it must be the case 
that $v \in \hat \stdFace$, so $\dist(v, \hat \stdFace) = 0$. 
Therefore, $\epsilon = 0$ implies $\bar \psi _{\stdFace,z} (0,\alpha) = 0$ for every $\alpha \in \Re_+$.
This shows that $\bar \psi _{\stdFace,z} (\cdot,\cdot)$ is indeed 
a {\FRF} and we will call $\bar \psi _{\stdFace,z}$ the \emph{canonical {\FRF} for $\stdFace$ and $z$}.
It is the best possible, since, by definition,  
$\bar \psi _{\stdFace,z}(\epsilon,\norm{x}) \leq \psi _{\stdFace,z}(\epsilon,\norm{x})$, if 
$\psi _{\stdFace,z}$ is another \FRF.

The existence of  canonical {\FRF} shows that, in principle, error  bounds for amenable cones can always be established, see Theorem \ref{theo:err}.
Unfortunately, computing $\bar \psi _{\stdFace,z}$ is  complicated since it boils down to \emph{maximization} of a convex function over a convex set.
It is also likely that $\bar \psi _{\stdFace,z}$ will have no easy formula as a function of 
$\epsilon$ and $\norm{x}$.

In face of these difficulties, one of the goals in this paper is to show that many useful cones admit simpler {\FRFs}. 
For example, we will show in Theorem \ref{theo:sym_am} that for symmetric cones, 
we can use $\kappa \epsilon + \kappa \sqrt{\epsilon \norm{x}}$ as a {\FRF}, where $\kappa$ is a positive constant.

We say that 
a function $\tilde \psi_{\stdFace,z}$ is a \emph{positive rescaling of $\psi_{\stdFace,z}$} if there are positive constants $M_1,M_2,M_3$ such that 
$$\tilde \psi _{\stdFace,z}(\epsilon,\norm{x}) = M_3\psi_{\stdFace,z} (M_1\epsilon,M_2\norm{x}).$$
Two functions $\psi_1, \psi_2$ are the same \emph{up to positive rescaling} if $\psi _1$ is equal to a positive rescaling of $\psi _2$.
It is possible that $\psi_{\stdFace,z}$ is  different for each choice of $\stdFace$ and $z$. However, in a few cases of interest such as symmetric cones,  $\psi_{\stdFace,z}$ can be taken to be a positive rescaling of 
the same fixed \FRF, see Theorem \ref{theo:sym_am}.

In the next proposition, we will see that when we perform a simple operation on a cone $\stdCone$, we may still use the same facial residual functions for $\stdCone$ if we positive rescale them. As the proof is long but routine, it is deferred to Appendix~\ref{app:proof}.

\begin{proposition}\label{prop:ebtp}
	The following hold.
	\begin{enumerate}[label=$(\roman*)$]
		\item Let $\stdCone = \stdCone^1 \times \stdCone^2$, where 
		$\stdCone^1 \subseteq \jAlg^1,\stdCone^2 \subseteq \jAlg^2$ are amenable cones.

		Let 
		$\stdFace \face \stdCone$ and $z \in \stdFace^*$. Write 
		$\stdFace = \stdFace^1\times \stdFace^2$ where $\stdFace_1 \face \stdCone^1$, $\stdFace _2\face \stdCone^2$. 
		Write $z = (z_1,z_2)$ with $z_1 \in (\stdFace^1)^*$ and $z_2 \in (\stdFace^2)^*$.

		Let $\psi_{\stdFace_1,z_1}$, $\psi_{\stdFace_2,z_2}$ be  {\FRFs} for $\stdFace_1,z_1$ and $\stdFace_2,z_2$, respectively.
	
		Then, there is 	 a positive rescaling of $\psi_{\stdFace_1,z_1}+\psi_{\stdFace_2,z_2}$
		that is also a {\FRF} for $\stdFace,z$.	
		
		\item Let $\stdMap$ be an injective linear map. 
		
		Let $\stdMap(\stdFace) \face \stdMap(\stdCone)$, where 
		$\stdFace \face \stdCone$. Let $z \in (\stdMap(\stdFace))^*$.
		
		Let $\psi _{\stdFace,\stdMap^{\T}z}$ be a {\FRF} for 
		${\stdFace,\stdMap^{\T}z}$.
		
		Then, there is a positive rescaling of $\psi _{\stdFace,\stdMap^{\T}z}$
		that is a {\FRF} for $\stdMap(\stdFace),z$.
	\end{enumerate} 	
\end{proposition}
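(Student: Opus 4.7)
Let $x = (x_1, x_2) \in \spanVec \stdCone^1 \times \spanVec \stdCone^2 = \spanVec \stdCone$ satisfy the hypotheses of Definition~\ref{def:ebtp} for $\stdFace, z$ at level $\epsilon$. By \eqref{eq:prod_proj}, both $\dist(x_i, \stdCone^i)$ and $\dist(x_i, \spanVec \stdFace^i)$ are bounded by $\epsilon$ for $i = 1,2$. First I would verify the factorization $\stdFace \cap \{z\}^\perp = (\stdFace^1 \cap \{z_1\}^\perp) \times (\stdFace^2 \cap \{z_2\}^\perp)$: for $u_i \in \stdFace^i$ and $z_i \in (\stdFace^i)^*$ one has $\inProd{u_i}{z_i} \geq 0$, so their sum vanishes only if each term does. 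The real obstacle is that the inner-product condition $\inProd{x_1}{z_1} + \inProd{x_2}{z_2} \leq \epsilon$ does not split directly, since each summand could be very negative a priori. To break this coupling I would invoke amenability of $\stdCone^i$: projecting $x_i$ onto $\spanVec \stdFace^i$ and then applying \eqref{eq:def_am} produces $v_i \in \stdFace^i$ with $\norm{x_i - v_i} \leq C\epsilon$ for a constant $C$ depending on the amenability constants. Since $z_i \in (\stdFace^i)^*$, we have $\inProd{v_i}{z_i} \geq 0$, and Cauchy--Schwarz then gives $\inProd{x_i}{z_i} \geq -C\norm{z_i}\epsilon$; combined with the joint upper bound this yields $\inProd{x_j}{z_j} \leq C'\epsilon$ for each $j$. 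I would then apply $\psi_{\stdFace^i, z_i}$ to $x_i$ at level $\max(1, C')\epsilon$, use $\norm{x_i} \leq \norm{x}$, and assemble via \eqref{eq:prod_proj} together with $\sqrt{a^2 + b^2} \leq a + b$ to obtain the desired bound as a positive rescaling of $\psi_{\stdFace^1, z_1} + \psi_{\stdFace^2, z_2}$.

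\textbf{Plan for part (ii).} Let $\sigma > 0$ be a constant with $\norm{\stdMap v} \geq \sigma \norm{v}$ for all $v$, which exists by injectivity. Given $y$ satisfying the hypotheses for $\stdMap(\stdFace), z$ at level $\epsilon$, using $\spanVec \stdMap(\stdCone) = \stdMap(\spanVec \stdCone)$ and injectivity I can write $y = \stdMap x$ for a unique $x \in \spanVec \stdCone$. The key identity is $\stdMap(\stdFace) \cap \{z\}^\perp = \stdMap(\stdFace \cap \{\stdMap^\T z\}^\perp)$, which follows from the adjoint relation $\inProd{\stdMap v}{z} = \inProd{v}{\stdMap^\T z}$. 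The injectivity bound yields $\dist(x, \stdCone) \leq \epsilon/\sigma$, $\dist(x, \spanVec \stdFace) \leq \epsilon/\sigma$, and $\norm{x} \leq \norm{y}/\sigma$; the adjoint gives $\inProd{x}{\stdMap^\T z} = \inProd{y}{z} \leq \epsilon$. Applying $\psi_{\stdFace, \stdMap^\T z}$ to $x$ at level $\max(1, 1/\sigma)\epsilon$ bounds $\dist(x, \stdFace \cap \{\stdMap^\T z\}^\perp)$, and taking an approximate minimizer $u$ and using $\norm{\stdMap(x - u)} \leq \norm{\stdMap}\norm{x - u}$ transports the bound back, giving $\dist(y, \stdMap(\stdFace) \cap \{z\}^\perp) \leq \norm{\stdMap}\dist(x, \stdFace \cap \{\stdMap^\T z\}^\perp)$. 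This is exactly a positive rescaling of $\psi_{\stdFace, \stdMap^\T z}$ with constants $(M_1, M_2, M_3) = (\max(1, 1/\sigma), 1/\sigma, \norm{\stdMap})$.

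\textbf{Expected main obstacle.} The substantive difficulty lies in the inner-product coupling in (i): because $z_i \in (\stdFace^i)^*$ rather than $(\stdCone^i)^*$, sign control on $\inProd{x_i}{z_i}$ is not automatic and the joint bound $\inProd{x_1}{z_1} + \inProd{x_2}{z_2} \leq \epsilon$ could in principle arise from large cancellation between the two terms. Amenability of $\stdCone^i$ is precisely what supplies a nearby point $v_i \in \stdFace^i$ whose pairing with $z_i$ is nonnegative, giving the missing lower bound on each term and enabling the split; this is where the hypothesis earns its keep. The rest of (i) is routine bookkeeping with the product norm, and (ii) is essentially a change of variables using the operator norm of $\stdMap$ and its injective left inverse.
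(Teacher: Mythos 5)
Your proposal is correct and follows essentially the same route as the paper's proof: in (i) you use amenability to produce a nearby point of $\stdFace^i$ whose pairing with $z_i$ is nonnegative, thereby decoupling $\inProd{x_1}{z_1}+\inProd{x_2}{z_2}\leq \epsilon$ into per-block bounds (the paper bounds $\inProd{x_i+y_i+v_i}{z_i}$ from above rather than $\inProd{x_i}{z_i}$ from below, a cosmetic difference), and in (ii) you use the identity $(\stdMap(\stdFace))\cap\{z\}^\perp=\stdMap(\stdFace\cap\{\stdMap^{\T}z\}^\perp)$ with the minimum and maximum singular values of $\stdMap$, exactly as in the appendix. The assembly via the product norm and the explicit rescaling constants are likewise the same, so no gaps to report.
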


We will now show that polyhedral cones admit {\FRFs} that 
are linear in $\epsilon$ and do not depend on $\norm{x}$.

\begin{proposition}\label{prop:poly_amenable}
	Let $\stdCone$ be a polyhedral cone and $\stdFace$ a face of $\stdCone$.
	Let $z \in \stdFace^*$ and 
	$\hat \stdFace = \stdFace \cap \{z\}^{\perp}$.
	Then, there is a positive  constant $\kappa$ (depending on $\stdCone,\stdFace,z$)  such that 
	whenever $x$ satisfies  the inequalities
	$$
	\dist(x,\stdCone) \leq \epsilon, \quad \inProd{x}{z} \leq \epsilon, \quad \dist(x, \spanVec \stdFace ) \leq \epsilon
	$$
	we have:
	$$
	\dist(x,\hat \stdFace)  \leq \kappa \epsilon.
	$$	
	
	That is, we can take $\psi _{ \stdFace,z}(\epsilon,\norm{x}) = \kappa \epsilon$ as a {\FRF} for $ \stdFace$ and $z$.
\end{proposition}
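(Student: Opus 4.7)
The plan is to reduce everything to Hoffman's Lemma (Theorem~\ref{theo:hoff}), exploiting the fact that when $\stdCone$ is polyhedral, $\stdFace$, $\spanVec \stdFace$, and any halfspace determined by $z$ are all polyhedral too. The only slight subtlety is that the hypothesis $\inProd{x}{z} \le \epsilon$ is one-sided, so I should not try to bound distance to the hyperplane $\{z\}^\perp$ directly; a halfspace is better suited.

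First I would rewrite $\hat\stdFace$ as an intersection of three polyhedral sets in a way that lets the one-sided bound $\inProd{x}{z}\le\epsilon$ be used. Set $H = \{v \in \jAlg \mid \inProd{v}{z} \le 0\}$ and claim
\[
\hat\stdFace \;=\; \stdCone \cap \spanVec \stdFace \cap H.
\]
The inclusion $\subseteq$ is immediate. For $\supseteq$, any $v$ in the right-hand side lies in $\stdCone \cap \spanVec \stdFace = \stdFace$, and since $z \in \stdFace^*$ we have $\inProd{v}{z} \ge 0$; together with $v \in H$ this forces $\inProd{v}{z} = 0$, so $v \in \hat\stdFace$.

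Second, since $0$ lies in all three sets, Hoffman's Lemma applied to $\stdCone,\ \spanVec \stdFace,\ H$ yields a constant $\kappa_0 > 0$ such that
\[
\dist(x,\hat\stdFace) \;\le\; \kappa_0\bigl(\dist(x,\stdCone) + \dist(x,\spanVec \stdFace) + \dist(x,H)\bigr) \qquad \forall x \in \jAlg.
\]
The first two terms on the right are at most $\epsilon$ by hypothesis. For the third, if $z \neq 0$ the standard formula for distance to a halfspace gives $\dist(x,H) = \max(\inProd{x}{z},0)/\norm{z} \le \epsilon/\norm{z}$, again using the hypothesis. Combining, we obtain $\dist(x,\hat\stdFace) \le \kappa\epsilon$ with $\kappa = \kappa_0(2 + 1/\norm{z})$.

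Finally, the degenerate case $z=0$ is trivial: then $\hat\stdFace = \stdFace$ and one just applies Hoffman's Lemma to the two sets $\stdCone,\spanVec \stdFace$ (their intersection equals $\stdFace$). I don't anticipate any real obstacle; the main ``trick'' is simply replacing the hyperplane $\{z\}^\perp$ by the halfspace $H$, which is what makes the one-sided bound on $\inProd{x}{z}$ translate into a bound on $\dist(x,H)$.
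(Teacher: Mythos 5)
Your proof is correct, and it takes a genuinely different (and leaner) route than the paper's. The paper also reduces everything to Hoffman's Lemma, but it keeps the \emph{hyperplane} $\{z\}^\perp$ in the description $\hat\stdFace = \stdCone \cap \spanVec\stdFace \cap \{z\}^\perp$, which forces it to control $\abs{\inProd{x}{z}}$ from \emph{both} sides. To get the missing lower bound it first applies Hoffman's Lemma to $\stdCone \cap \spanVec\stdFace = \stdFace$, producing a nearby point $x+v \in \stdFace$ with $\norm{v} \le 2\epsilon\kappa_1$, and then uses $\inProd{x+v}{z}\ge 0$ to deduce $\abs{\inProd{x}{z}} \le \epsilon(1+2\kappa_1\norm{z})$; only then does it apply Hoffman's Lemma a second time to the three-set intersection. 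Your observation that $\hat\stdFace = \stdCone \cap \spanVec\stdFace \cap H$ with $H = \{v \mid \inProd{v}{z}\le 0\}$ (valid precisely because $z \in \stdFace^*$ makes $\inProd{\cdot}{z}$ nonnegative on $\stdFace$) eliminates that intermediate step entirely: the distance to the halfspace is $\max(\inProd{x}{z},0)/\norm{z}$, so the one-sided hypothesis $\inProd{x}{z}\le\epsilon$ is exactly what is needed, and a single application of Hoffman's Lemma finishes the argument. Both proofs yield a constant of the same linear-in-$\epsilon$ form; yours is shorter and makes the role of the one-sidedness transparent, while the paper's intermediate bound $\dist(x,\stdFace)\le 2\kappa_1\epsilon$ is a byproduct it does not otherwise need here. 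Your handling of the degenerate case $z=0$ is also fine.
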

\begin{proof}
Suppose $x$ satisfies $\dist(x,\stdCone) \leq \epsilon, \inProd{x}{z} \leq \epsilon$ and $\dist(x, \spanVec \stdFace ) \leq \epsilon$.  
The face $\stdFace$ can be written as the nonempty intersection of two polyhedral sets $$\stdFace = \stdCone \cap \spanVec \stdFace.$$ 
Therefore, from Hoffman's Lemma (Theorem \ref{theo:hoff}), there exists $\kappa_1$ (not depending on $x$) such that 
$$
\dist(x, \stdFace) \leq \kappa_1 (\dist(x,\stdCone) + \dist(x,\spanVec \stdFace)) \leq 2\epsilon \kappa_1. 
$$
Therefore, there exists $v$ such that $\norm{v} \leq 2\epsilon\kappa_1$ such that $x+v \in \stdFace$. Since $\inProd{x}{z} \leq \epsilon$ and $\inProd{x+v}{z} \geq 0$, we obtain
$$
-2\epsilon \kappa _1\norm{z} \leq \inProd{x}{z} \leq \epsilon(1+2\epsilon \kappa _1\norm{z} ). 
$$
Therefore, 
\begin{equation}
\abs{\inProd{x}{z}} \leq \epsilon(1+2 \kappa _1\norm{z} )\label{eq:poly:am}
\end{equation}	
The face $\hat \stdFace$ can be written as the nonempty intersection of 
three polyhedral sets $$\hat \stdFace = \stdCone \cap \spanVec \stdFace \cap \{z\}^\perp.$$
 From Hoffman's Lemma, there is $\kappa_2 > 0 $ such that 
 $$
 \dist(x, \hat \stdFace) \leq \kappa_2(\dist(x,\stdCone) + \dist(x,\spanVec \stdFace) + \dist(x,\{z\}^\perp)). 
 $$
Note that  $\dist(x,\{z\}^\perp)) = \abs{\inProd{x}{z}}/\norm{z}$. From \eqref{eq:poly:am}, we obtain
$$
\dist(x, \hat \stdFace) \leq \epsilon\kappa _2\left(2+\frac{1+2\kappa _1\norm{z}}{\norm{z}}\right).
$$	
We then take $\kappa = \kappa _2(2+\frac{1+2 \kappa _1\norm{z}}{\norm{z}}) $ to conclude the proof.
\end{proof}
Proposition \ref{prop:poly_amenable} is not useful by itself, since we can readily obtain error bounds directly from Hoffman's Lemma. 
However, there are cases where we have to deal with the direct product of polyhedral cones and nonpolyhedral cones. 
Then, since we can take as {\FRFa}s the sum of the individual {\FRFa}s (item $(i)$ of Proposition \ref{prop:ebtp}), it becomes clear that the polyhedral cones only give linear contributions to the overall sum.
This means that all source of non-Lipschitzness and 
nastiness in the error bounds must come from the nonpolyhedral parts, 
which is unsurprising but serves as a sanity check for the theory developed here.


\section{Error bounds}\label{sec:err}
We recall that our goal is to obtain error bounds for $(\stdCone,\stdSpace,a)$ without assuming regularity conditions. Namely, given some arbitrary $x$ we would like to bound 
$\dist(x,\stdCone\cap(\stdSpace+a))$ by some quantity involving $\dist(x,\stdCone)$ and 
$\dist(x,\stdSpace+a)$. 

Our first result is an error bound that is useful in situations where, for some reason, we know a face $\stdFace$ of $\stdCone$ that contains the feasible region of $(\stdCone, \stdSpace,a)$ and such that $(\stdFace, \stdSpace, a)$ satisfies the PPS condition. 
In particular, this covers the case where we know $\minFace$, which is the minimal face of $\stdCone$ that contains $\stdCone \cap (\stdSpace + a)$.

\begin{proposition}[Error bound for when a face satisfying the PPS condition is known]\label{prop:err2}
	Let $\stdCone$ be a closed convex amenable cone
	and let $\stdFace$ denote a face of $\stdCone$ containing $(\stdSpace+a) \cap \stdCone$ and  such that 	the PPS condition is satisfied.
	
	Then, there is a positive constant $\kappa$ (depending on $\stdCone,\stdSpace,a,\stdFace$) such that whenever  $x\in \spanVec \stdCone$ and $\epsilon$ satisfy the inequalities
	$$
	\quad \dist(x,\stdCone) \leq \epsilon, \quad \dist(x,\stdSpace + a) \leq \epsilon, \quad \dist(x, \spanVec \stdFace) \leq \epsilon,
	$$
	we have 
	$$
	\dist\left(x, (\stdSpace + a) \cap \stdCone\right) \leq \kappa \norm{x}\epsilon + \kappa \epsilon.
	$$
	
\end{proposition}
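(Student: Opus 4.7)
The plan is to chain together two ingredients: the amenability of $\stdCone$, which converts control on $\dist(x,\stdCone)$ into control on $\dist(x,\stdFace)$, and a Lipschitzian-type global error bound for $(\stdFace,\stdSpace,a)$ that follows from the PPS condition.

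First I would translate the hypotheses into a bound on $\dist(x,\stdFace)$. Since $\dist(x,\spanVec\stdFace)\le\epsilon$, pick $x'\in\spanVec\stdFace$ with $\norm{x-x'}\le\epsilon$; then $\dist(x',\stdCone)\le 2\epsilon$, and amenability of $\stdCone$ yields a constant $\kappa_1$ (depending only on $\stdCone$ and $\stdFace$) with $\dist(x',\stdFace)\le 2\kappa_1\epsilon$. The triangle inequality then gives
\[
\dist(x,\stdFace)\le (1+2\kappa_1)\epsilon,
\]
while the bound $\dist(x,\stdSpace+a)\le\epsilon$ is kept as is.

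Next I would invoke the PPS condition on $(\stdFace,\stdSpace,a)$ to produce a global error bound of the form
\[
\dist\!\bigl(x,\stdFace\cap(\stdSpace+a)\bigr)\le \kappa_2(1+\norm{x})\bigl(\dist(x,\stdFace)+\dist(x,\stdSpace+a)\bigr).
\]
The argument is the classical deep-cut one: decompose $\stdFace=\stdFace^1\times\stdFace^2$ with $\stdFace^2$ polyhedral, fix a PPS witness $\bar p=(\bar p_1,\bar p_2)\in\stdSpace+a$ with $\bar p_1\in\reInt\stdFace^1$, project $x$ onto $\stdSpace+a$ to obtain $\hat x$, and then push $\hat x$ toward $\bar p$ by a convex combination whose coefficient is proportional to $\dist(\hat x,\stdFace)$ divided by the depth of $\bar p_1$ in $\stdFace^1$; the movement needed is of order $(1+\norm{x})\dist(\hat x,\stdFace)$ because the segment from $\hat x$ to $\bar p$ has length roughly $\norm{\bar p}+\norm{x}$. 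The polyhedral factor $\stdFace^2$ contributes only a linear (Hoffman-type) term with no $\norm{x}$-dependence. Substituting the bounds from the previous paragraph into this inequality and noting $\stdFace\cap(\stdSpace+a)=\stdCone\cap(\stdSpace+a)$ (since $\stdFace$ contains $\stdCone\cap(\stdSpace+a)$ and $\stdFace\subseteq\stdCone$) yields the desired estimate with $\kappa=\kappa_2(2+2\kappa_1)$.

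The main obstacle is justifying the PPS Lipschitzian bound with the explicit $(1+\norm{x})$ factor: one has to quantify the room the interior witness $\bar p_1$ provides inside $\stdFace^1$ and combine this with Hoffman's Lemma applied on the polyhedral side, being careful that the polyhedral contribution does not pollute the estimate with additional $\norm{x}$-terms. Once this global bound is in hand, the amenability step and the final substitution are routine.
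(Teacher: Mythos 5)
Your overall plan (amenability first, then a Lipschitzian bound for the PPS face) matches the paper's, and your first step is identical to it, but the sketched justification of the crucial intermediate inequality $\dist(x,\stdFace\cap(\stdSpace+a)) \le \kappa_2(1+\norm{x})\bigl(\dist(x,\stdFace)+\dist(x,\stdSpace+a)\bigr)$ contains a step that would fail as described. After projecting $x$ onto $\stdSpace+a$ to get $\hat x=(\hat x_1,\hat x_2)$, the point $(1-t)\hat x + t\bar p$ is in general \emph{not} in $\stdFace$ for any $t<1$: first, $\hat x_1$ typically has a nonzero component orthogonal to $\spanVec\stdFace^1$ (the hypotheses only give $\dist(x,\spanVec\stdFace)\le\epsilon$, and projecting onto the affine set can move you off $\spanVec\stdFace$); since $\stdFace^1\subseteq\spanVec\stdFace^1$ and $\bar p_1\in\spanVec\stdFace^1$, the convex combination merely scales this off-span component by $(1-t)$ and never removes it, so the combined point cannot land in $\stdFace^1$. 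Second, $\hat x_2$ is only within $O(\epsilon)$ of $\stdFace^2$ and $\bar p_2$ is merely in $\stdFace^2$, not in its relative interior, so the combination need not enter $\stdFace^2$ either. Your remark that the polyhedral factor ``contributes only a Hoffman-type term'' gestures at the repair but is not actually integrated into the construction, and without it no exact point of $\stdFace\cap(\stdSpace+a)$ is produced.

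The missing idea is the order of operations used in the paper: before any push toward the interior witness, apply Hoffman's Lemma to the polyhedral set $(\stdSpace+a)\cap\bigl((\spanVec\stdFace^1)\times\stdFace^2\bigr)$ to obtain $y$ with $\norm{y}\le\kappa_2\epsilon$ and $x+y=(z_1,z_2)$ lying in that set; this simultaneously corrects the off-span defect in the first coordinate and puts the polyhedral coordinate exactly inside $\stdFace^2$, at linear cost in $\epsilon$. Then amenability is used a \emph{second} time, at $x+y\in\spanVec\stdFace$, to get $\dist(z_1,\stdFace^1)\le\epsilon(\kappa_1+\kappa_1\kappa_2)$. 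Only the $\stdFace^1$ coordinate then needs the deep-cut push toward $d_1\in\reInt\stdFace^1$ (quantified in the paper via the generalized eigenvalue function of Proposition \ref{prop:sur}, playing the role of your ``depth''), while $z_2+t_\epsilon d_2\in\stdFace^2$ holds automatically because $\stdFace^2$ is a convex cone containing both; finally the rescaled point $(x+y+t_\epsilon d)/(1+t_\epsilon)$ lies in $(\stdSpace+a)\cap\stdFace$ and its distance to $x$ gives exactly the $\kappa\norm{x}\epsilon+\kappa\epsilon$ bound. So the gap is confined to the PPS Lipschitzian step, and it is closed precisely by this Hoffman-correction-then-push argument.
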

\begin{proof}	
	Since the PPS condition is satisfied for $(\stdFace,\stdSpace,a)$, at least one of the statements below must be true (see Section \ref{sec:cq}). 
	\begin{enumerate}
		\item $\stdFace$ is polyhedral.
		\item $(\reInt \stdFace) \cap (\stdSpace+a)\neq \emptyset $.
		\item $\stdFace = \stdFace^1 \times \stdFace^2$ where $\stdFace^1$ and $\stdFace^2$ are closed convex cones such that $\stdFace^2$ is polyhedral and 
		$$
		((\reInt \stdFace^1)\times \stdFace^2 )\cap (\stdSpace + a) \neq \emptyset.
		$$
	\end{enumerate}
	Recall that cases 1. and 2. can be seen as special cases 
	of 3. if we add extra dummy coordinates.
	Therefore, without loss of generality, we assume that 
	$3.$ holds.
	
	Due to the amenability of $\stdCone$, there is $\kappa _1$ such that
	\begin{equation}
	\dist(z,\stdFace) \leq \kappa_1\dist(z,\stdCone), \quad \forall z \in \spanVec \stdFace. \label{eq:amen1}
	\end{equation}
	Now, let $u$ be such that $\norm{u}\leq \epsilon$ and 
	$x + u \in \spanVec \stdFace$. We have 
	$$\dist(x+u,\stdFace) \leq \kappa_1\dist(x+u,\stdCone) \leq 2\kappa_1\epsilon.$$ 
	Then, observing that 
	$\dist(x,\stdFace) \leq \dist(-u,\stdFace) + \dist(x+u,\stdFace)$, we obtain that $$\dist(x,\stdFace)\leq (1+2\kappa_1)\epsilon.$$	
	Next, since 
	$\dist(x,\spanVec \stdFace) \leq \epsilon$ and $\stdFace \subseteq (\spanVec \stdFace^1)\times \stdFace^2 $, we conclude that
	$$
	\dist(x,(\spanVec\stdFace^1)\times \stdFace^2) \leq \dist(x,\stdFace) \leq (1+2\kappa _1)\epsilon.
	$$	 
	Let $\hat \kappa _1 = (1+2\kappa_1)$.
	Since $\stdFace^2$ is a polyhedral cone and $(\stdSpace + a)\cap ( (\spanVec \stdFace^1)\times \stdFace^2) \neq \emptyset$, we can invoke Hoffman's Lemma (Theorem \ref{theo:hoff})
	which tells us that  there exists a constant $\kappa _2$ such that whenever $x \in \spanVec \stdCone$ satisfies 
	$$
	\dist(x,\stdSpace + a) \leq \epsilon,  \quad \dist(x,(\spanVec \stdFace^1)\times \stdFace^2) \leq \hat \kappa_1 \epsilon	
	$$
	we have $$\dist(x,(\stdSpace + a)\cap ( (\spanVec \stdFace^1)\times \stdFace^2)) \leq  \epsilon \kappa_2.$$
	Therefore, there is $y$ such that $\norm{y} \leq \epsilon \kappa _2$ and
	$$x + y \in (\stdSpace + a)\cap( (\spanVec \stdFace^1)\times \stdFace^2).
	$$ 
	Since $x+ y \in (\spanVec \stdFace^1)\times \stdFace^2$, we can write $x+y = (z_1,z_2)$, with $z_1 \in \spanVec \stdFace^1$ and 
	$z _2 \in \stdFace _2$.
	By \eqref{eq:amen1} and since $x + y $ lies in $\spanVec \stdFace$, we have  
	\begin{equation}
	\dist(z_1,\stdFace^1) \leq \dist (x + y,\stdFace) \leq \kappa _1 \dist (x + y,\stdCone)	\leq \epsilon(\kappa_1 + \kappa_1\kappa_2).\label{eq:bt}
	\end{equation}
	Since the PPS condition is satisfied, there exists $$d = (d_1,d_2) \in ((\reInt \stdFace^1)\times \stdFace^2)\cap (\stdSpace+a).$$ 
	By Proposition \ref{prop:sur}, there is 
	$\kappa _3 > 0$ such that 
	\begin{equation}
	-\eig{d_1}{\stdFace^1}(z_1) \kappa _3\leq \dist(z_1,\stdFace^1).  \label{eq:be_er2}
	\end{equation} 
	Let $t_{\epsilon} = \epsilon (\kappa_1 + \kappa_1\kappa_2)/\kappa _{3}$. It follows from \eqref{eq:bt} and \eqref{eq:be_er2} that 
	$z_1 + t_{\epsilon} d_1 \in \stdFace^1$. As $d_2,z_2 \in \stdFace^2$, we conclude that
	\begin{align}
	x + y + t_{\epsilon}d \in \stdFace. \label{eq:aux:er}
	\end{align}
	We have
	\begin{align*}
	x + y + t_{\epsilon}d = (x+y-a) + t_{\epsilon}(d-a)  + a(1+t_{\epsilon}).
	\end{align*}
	Furthermore, since $x+y \in \stdSpace+a$ and $d \in \stdSpace + a$, the first two terms of the 
	right hand side belong to $\stdSpace$. 
	Therefore, if we divide the whole expression by $(1+t_{\epsilon})$ we get
	\begin{align*}
	\frac{x+y + t_{\epsilon} d}{1+t_{\epsilon}} \in (\stdSpace + a)\cap \stdFace.
	\end{align*}
	We conclude that
	\begin{align*}
	\dist(x,(\stdSpace + a)\cap \stdFace) & \leq \norm{x-\left(\frac{x+y + t_{\epsilon} d}{1+t_{\epsilon}}\right) } \\
	& \leq  \norm{x}\frac{t_{\epsilon}}{1+t_{\epsilon}} + \frac{\epsilon\kappa_2}{1+t_{\epsilon}} + \norm{d}\frac{t_{\epsilon}}{1+t_{\epsilon}}\\
	& \leq  \norm{x}t_{\epsilon} + \epsilon\kappa_2 +  \norm{d}t_{\epsilon}\\
	& \leq  \kappa \norm{x}\epsilon + \kappa \epsilon,
	\end{align*}
	where $\kappa = \max\{(\kappa_1 + \kappa_1\kappa_2)/\kappa _{3},\kappa_2+\norm{d}(\kappa_1 + \kappa_1\kappa_2)/\kappa _{3}\}$.
\end{proof}
Proposition \ref{prop:err2} has the following immediate corollary, where  $\dist(x,\spanVec \stdFace)$ is embedded directly into the error bound.
\begin{corollary}\label{col:beauty}
	Let $\stdCone$ be a closed convex amenable cone and 
	$\stdFace \face \stdCone$ a face of $\stdCone$ containing $(\stdSpace+a) \cap \stdCone$ and  such that 	$(\stdFace,\stdSpace,a)$ satisfies the PPS condition.
	
	Then, there is a positive constant $\kappa$ (depending on $\stdCone,\stdSpace,a,\stdFace$) such that whenever  $x \in \spanVec \stdCone$ and $\epsilon$ satisfy the inequalities
	$$
	\quad \dist(x,\stdCone) \leq \epsilon, \quad \dist(x,\stdSpace + a) \leq \epsilon
	$$
	we have 
	$$
	\dist\left(x, (\stdSpace + a) \cap \stdCone\right) \leq (\kappa \norm{x}+ \kappa )(\epsilon+\dist(x, \spanVec \stdFace)).
	$$	
\end{corollary}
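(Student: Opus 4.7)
The plan is to apply Proposition \ref{prop:err2} directly with an enlarged value of $\epsilon$. Proposition \ref{prop:err2} gives the desired form of error bound, but under three hypotheses: $\dist(x,\stdCone)\le\epsilon$, $\dist(x,\stdSpace+a)\le\epsilon$, and $\dist(x,\spanVec\stdFace)\le\epsilon$. The corollary only assumes the first two, so I need to absorb the third into a new $\epsilon$ that still makes the first two hold.

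Concretely, set $\tilde\epsilon := \epsilon + \dist(x,\spanVec\stdFace)$. Since distances are nonnegative, we immediately get $\dist(x,\stdCone) \le \epsilon \le \tilde\epsilon$ and $\dist(x,\stdSpace+a) \le \epsilon \le \tilde\epsilon$; by construction $\dist(x,\spanVec\stdFace) \le \tilde\epsilon$ as well. Thus the hypotheses of Proposition \ref{prop:err2} are satisfied for the same $\stdFace$ (which by assumption contains $(\stdSpace+a)\cap\stdCone$ and satisfies the PPS condition), with $\tilde\epsilon$ in place of $\epsilon$.

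Let $\kappa$ be the constant delivered by Proposition \ref{prop:err2}. Applying that proposition yields
$$
\dist\bigl(x,(\stdSpace+a)\cap\stdCone\bigr) \;\le\; \kappa\norm{x}\tilde\epsilon + \kappa\tilde\epsilon \;=\; (\kappa\norm{x}+\kappa)\bigl(\epsilon+\dist(x,\spanVec\stdFace)\bigr),
$$
which is exactly the claimed bound with the same constant $\kappa$.

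There is no real obstacle here; the corollary is purely a bookkeeping rearrangement of Proposition \ref{prop:err2}, replacing the explicit third hypothesis $\dist(x,\spanVec\stdFace)\le\epsilon$ by adding that distance to $\epsilon$ and pushing it into the right-hand side. The main content has already been done in the proof of Proposition \ref{prop:err2}.
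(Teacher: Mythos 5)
Your proposal is correct and is essentially the paper's own argument: the paper likewise applies Proposition \ref{prop:err2} with an enlarged tolerance (it uses $\hat\epsilon = \dist(x,\stdCone)+\dist(x,\stdSpace+a)+\dist(x,\spanVec\stdFace)$ and then adjusts the constant), while your choice $\tilde\epsilon=\epsilon+\dist(x,\spanVec\stdFace)$ is the same idea and even avoids the constant adjustment.
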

\begin{proof}
	We apply the previous proposition by taking $\hat \epsilon =  \dist(x,\stdCone) + \dist(x,\stdSpace + a) + \dist(x, \spanVec \stdFace)$, which tells us that 
	$$
	\dist\left(x, (\stdSpace + a) \cap \stdCone\right) \leq  (\kappa \norm{x} + \kappa)(\dist(x,\stdCone) + \dist(x,\stdSpace + a) + \dist(x, \spanVec \stdFace)).
	$$
	Adjusting the constant $\kappa$, we get that whenever $x \in \spanVec \stdCone$ satisfies
	$$
	\quad \dist(x,\stdCone) \leq \epsilon, \quad \dist(x,\stdSpace + a) \leq \epsilon
	$$
	we have
	$$
	\dist\left(x, (\stdSpace + a) \cap \stdCone\right) \leq (\kappa \norm{x}+ \kappa )(\epsilon+\dist(x, \spanVec \stdFace)).
	$$	
\end{proof}

From Proposition \ref{prop:err2} and Corollary \ref{col:beauty}, it 
becomes clear that the key to general error bounds 
for $(\stdCone, \stdSpace, a)$ is to know some face $\hat \stdFace$ of $\stdCone$ for which the PPS condition is satisfied \emph{and} we should also know some bound on $\dist(x,\spanVec \hat \stdFace)$. 

This is where we will use facial reduction (Section \ref{sec:fra}). If $(\stdCone,\stdSpace,a)$ is feasible, but 
the PPS condition is not satisfied, then there exists 
$z_1 \in \stdCone^* \cap \stdSpace \cap \{a\}^\perp$ with 
$z_1 \not \in \stdCone^\perp$, e.g., Theorem 4 in \cite{LMT15}.
In particular, $\stdFace_1 \coloneqq \stdCone \cap \{z_1\}^\perp$ is a proper face of
$\stdCone$ that contains the feasible region of 
$(\stdCone, \stdSpace,a)$. 
Again, if $(\stdCone \cap \{z_1\}^\perp, \stdSpace,a)$ still does not 
satisfy the PPS condition, we use the same principle to obtain a new $z_2$ together with the face $\stdFace _2 \coloneqq \stdCone \cap \{z_1\}^\perp\cap \{z_2\}^\perp$.
Then, we proceed until a face satisfying the PPS condition is found. 
At each step,  we will use a {\FRF} to keep track of 
the distance between $x$ and the face $\stdFace _i$.
The next proposition is the first step towards this idea.



\begin{proposition}\label{prop:res_feas}
	Let $(\stdCone, \stdSpace,a)$ be feasible.
	Let $\stdFace$ be a face of $\stdCone$, 
	\begin{align*}
	z \in \stdFace^* \cap \stdSpace^{\perp}\cap\{a\}^\perp\\
	\hat \stdFace = \stdFace \cap \{z\}^{\perp},
	\end{align*}
	 with $z \neq 0$.
	 Let $\psi _{\stdFace,z}$ be a {\FRF} for $\stdFace$ and $z$. Then,  there is a positive rescaling of  $\psi _{\stdFace,z}$ such that whenever $x \in \spanVec \stdCone$ satisfies  the inequalities
	$$
	\dist(x,\stdCone) \leq \epsilon, \quad \dist(x,\stdSpace + a) \leq \epsilon
	$$
	we have:
	\begin{align*}
	\dist(x, \hat \stdFace)  \leq 
	\psi _{\stdFace,z} (\epsilon + \dist(x, \spanVec \stdFace ), \norm{x}). 
	\end{align*}
\end{proposition}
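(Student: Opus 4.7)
The plan is to apply the \FRF{} definition after converting the hypothesis $\dist(x,\stdSpace+a)\leq \epsilon$ into a bound on $\inProd{x}{z}$. This conversion is where the orthogonality conditions on $z$ do all the work: since $z \in \stdSpace^\perp \cap \{a\}^\perp$, every point of $\stdSpace + a$ is annihilated by $z$, so if $x$ is within $\epsilon$ of $\stdSpace+a$ then $\inProd{x}{z}$ is within $\norm{z}\epsilon$ of $0$.

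Concretely, fix $x \in \spanVec \stdCone$ satisfying the two inequalities and pick $y \in \stdSpace+a$ with $\norm{x-y}\leq \epsilon$. Writing $y = s + a$ with $s \in \stdSpace$ gives $\inProd{y}{z}=\inProd{s}{z}+\inProd{a}{z}=0$, hence by Cauchy--Schwarz
\[
\inProd{x}{z} \;=\; \inProd{x-y}{z} \;\leq\; \norm{z}\,\epsilon.
\]
Now set $\tilde\epsilon \coloneqq \epsilon + \dist(x,\spanVec\stdFace)$ and $M \coloneqq \max(1,\norm{z})$. Then all three preconditions of Definition~\ref{def:ebtp} are met with the common upper bound $M\tilde\epsilon$: indeed $\dist(x,\stdCone)\leq \epsilon \leq M\tilde\epsilon$, $\inProd{x}{z}\leq \norm{z}\epsilon \leq M\tilde\epsilon$, and $\dist(x,\spanVec\stdFace)\leq \tilde\epsilon \leq M\tilde\epsilon$. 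Applying the \FRF{} property to $x$ with that bound yields
\[
\dist(x,\hat\stdFace) \;\leq\; \psi_{\stdFace,z}\bigl(M\tilde\epsilon,\,\norm{x}\bigr) \;=\; \psi_{\stdFace,z}\bigl(M(\epsilon + \dist(x,\spanVec\stdFace)),\,\norm{x}\bigr),
\]
which is a positive rescaling of $\psi_{\stdFace,z}$ (taking $M_1 = M$, $M_2 = M_3 = 1$), as required.

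There is no real obstacle: the argument is essentially bookkeeping of constants, and the only substantive observation is the elementary one that the orthogonality of $z$ to both $\stdSpace$ and $a$ forces $\inProd{x}{z}$ to inherit smallness from $\dist(x,\stdSpace+a)$. The hypothesis $z\neq 0$ is only needed to make $M$ a genuine rescaling factor, and feasibility of $(\stdCone,\stdSpace,a)$ is not used directly in this proposition; it is a standing assumption ensuring $\hat\stdFace$ plays its intended role in the facial reduction chain that this lemma feeds into.
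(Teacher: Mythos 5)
Your proof is correct and takes essentially the same route as the paper's: both hinge on the observation that $z \in \stdSpace^{\perp}\cap\{a\}^{\perp}$ turns $\dist(x,\stdSpace+a)\leq\epsilon$ into $\abs{\inProd{x}{z}}\leq\norm{z}\epsilon$, after which the facial residual function is applied with monotonicity and a positive rescaling. The only difference is bookkeeping: the paper rescales $\psi_{\stdFace,z}$ up front and feeds it the weighted sum $\tfrac{\dist(x,\stdCone)}{2}+\tfrac{\abs{\inProd{x}{z}}}{2\norm{z}}+\dist(x,\spanVec\stdFace)$, whereas you use the common bound $M(\epsilon+\dist(x,\spanVec\stdFace))$ with $M=\max(1,\norm{z})$, which amounts to the same rescaling argument.
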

\begin{proof}
	Positive rescaling $\psi_{\stdFace,z}$ if necessary, we may assume that $\psi_{\stdFace,z}$ is such that whenever $x \in \spanVec \stdCone$ and $\tilde \epsilon$ satisfy  the inequalities
	$$
	\dist(x,\stdCone) \leq 2\tilde \epsilon, \quad \inProd{x}{z} \leq 2\tilde \epsilon\norm{z}, \quad \dist(x, \spanVec \stdFace ) \leq \tilde \epsilon
	$$
	we have
	$$
	\dist(x, \hat \stdFace)  \leq 
	\psi_{\stdFace,z} (\tilde \epsilon, \norm{x}).
	$$
	Let $$\tilde \epsilon = \frac{\dist(x,\stdCone)}{2} + \frac{\abs{\inProd{x}{z}}}{2\norm{z}} + \dist(x, \spanVec \stdFace ).$$ 
	Then, the following inequality holds for every $x \in \spanVec \stdCone$.  
	\begin{equation}
	\dist(x, \hat \stdFace)  \leq \psi_{\stdFace,z} \left(\frac{\dist(x,\stdCone)}{2} + \frac{\abs{\inProd{x}{z}}}{2\norm{z}} + \dist(x, \spanVec \stdFace ), \norm{x}\right).	\label{eq:fr1}
	\end{equation}
	Now, suppose that $x \in \spanVec \stdCone$ satisfies
	$$
	\dist(x,\stdCone) \leq \epsilon, \quad \dist(x,\stdSpace + a) \leq \epsilon.
	$$	
	Since $\dist(x,\stdSpace + a) \leq \epsilon$, there exists $u$ such that $\norm{u} \leq \epsilon$ and $x + u \in \stdSpace + a$. Because $z$ is orthogonal to 
	$\stdSpace + a$, it follows that $\inProd{x+u}{z} = 0$ and that	
	\begin{equation}
	\abs{\inProd{x}{z}}  \leq \norm{z}\epsilon. \label{eq:z1}
	\end{equation}
	Finally, from \eqref{eq:fr1}, \eqref{eq:z1}, $\dist(x,\stdCone) \leq \epsilon$ and the monotonicity of $\psi_{\stdFace,z}$, we obtain that 
	$\dist(x,  \hat \stdFace)  \leq 
	\psi_{\stdFace,z} (\epsilon + \dist(x,\spanVec \stdFace), \norm{x})$.
\end{proof}

For what follows, we introduce a special notation for function composition. 
Let $f:\Re\times \Re \to \Re$ and $g:\Re\times \Re \to \Re$ be real functions.
We define $f\comp g$ to be the function satisfying 
$$
(f\comp g)(a,b) = f(a+g(a,b),b),
$$
for every $a,b \in \Re$. Note that if $f$ and $g$ are monotone nondecreasing on each argument, then the same is true for $f\comp g$. 
	
\begin{lemma}\label{lem:am_chain}
Let $\stdSpace\subseteq \jAlg$ be a subspace and $a \in \jAlg$.
Let 
$$
\stdFace _{\ell}  \subsetneq \cdots \subsetneq \stdFace_1 = \stdCone 
$$
be a chain of faces of $\stdCone$ together with $z_i \in \stdFace _i^*\cap\stdSpace^\perp \cap \{a\}^\perp$ such that 
$\stdFace_{i+1} = \stdFace _i\cap \{z_i\}^\perp$, for $i = 1,\ldots, \ell - 1$. 
For those $i$, let $\psi _{i}$ be a {\FRF} for $\stdFace_{i}, z_{i}$.
Then, there is a positive rescaling of the $\psi_i$ such that if $x \in \spanVec \stdCone$ satisfies the inequalities
	$$
	\dist(x,\stdCone) \leq \epsilon, \quad \dist(x,\stdSpace + a) \leq \epsilon
	$$
	we have:
	$$
	\dist(x,  \stdFace _{\ell})  \leq 
	\varphi (\epsilon,\norm{x}),
	$$
where $\varphi = \psi _{{\ell-1}}\comp \cdots \comp \psi_{{1}}$, if $\ell \geq 2$. If $\ell = 1$, we let $\varphi$ be the function satisfying $\varphi(\epsilon, \norm{x}) = \epsilon$.
\end{lemma}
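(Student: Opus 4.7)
The plan is to induct on the length $\ell$ of the chain. The base case $\ell = 1$ is immediate: since $\stdFace_1 = \stdCone$, the hypothesis gives $\dist(x,\stdFace_1) = \dist(x,\stdCone) \le \epsilon = \varphi(\epsilon, \norm{x})$. For $\ell \ge 2$, I would peel off the chain from the top, using Proposition \ref{prop:res_feas} to bound $\dist(x,\stdFace_{i+1})$ in terms of $\dist(x,\spanVec\stdFace_i)$ and then weakening the latter via $\dist(x, \spanVec\stdFace_i) \le \dist(x,\stdFace_i)$ so that the inductive hypothesis can be plugged in.

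In more detail, for the inductive step from a chain of length $i$ to length $i+1$, I would assume the bound $\dist(x,\stdFace_i) \le \varphi_i(\epsilon, \norm{x})$, where $\varphi_i$ denotes the composition accumulated so far (with $\varphi_1(\epsilon, \norm{x}) = \epsilon$ and $\varphi_i = \psi_{i-1} \comp \cdots \comp \psi_1$ for $i \ge 2$, all understood up to the positive rescalings produced at earlier steps). Applying Proposition \ref{prop:res_feas} to the face $\stdFace_i$ with reducing direction $z_i$ — which is nonzero because the strict inclusion $\stdFace_{i+1} = \stdFace_i \cap \{z_i\}^\perp \subsetneq \stdFace_i$ rules out $z_i \in \stdFace_i^\perp$ — yields, after a positive rescaling of $\psi_i$,
$$
\dist(x, \stdFace_{i+1}) \le \psi_i\bigl(\epsilon + \dist(x, \spanVec \stdFace_i),\, \norm{x}\bigr).
$$
Since $\dist(x, \spanVec \stdFace_i) \le \dist(x, \stdFace_i) \le \varphi_i(\epsilon, \norm{x})$ by the inductive hypothesis, and since $\psi_i$ is monotone nondecreasing in its first argument (Definition \ref{def:ebtp}(i)), the right-hand side is bounded above by $\psi_i(\epsilon + \varphi_i(\epsilon, \norm{x}), \norm{x}) = (\psi_i \comp \varphi_i)(\epsilon, \norm{x}) = \varphi_{i+1}(\epsilon, \norm{x})$, which closes the induction.

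The proof is routine once Proposition \ref{prop:res_feas} is in hand; there is no real obstacle. The only piece of bookkeeping requiring attention is that each invocation of Proposition \ref{prop:res_feas} supplies its own positive rescaling of the corresponding $\psi_i$, and these must be collected into a single joint positive rescaling of the whole family $\psi_1, \ldots, \psi_{\ell-1}$ appearing in the statement. This is harmless: the composition operator $\comp$ preserves nonnegativity and monotonicity, and the finitely many positive rescaling constants can be absorbed factor-by-factor into the respective $\psi_i$'s without disturbing the structure of the composed bound.
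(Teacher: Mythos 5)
Your proposal is correct and follows essentially the same route as the paper: induction on the chain length, applying Proposition \ref{prop:res_feas} at each step, weakening $\dist(x,\spanVec \stdFace_i)\leq\dist(x,\stdFace_i)$, and using monotonicity of $\psi_i$ in its first argument, with the rescalings absorbed factor-by-factor. The only cosmetic difference is at the first step: the paper uses $\dist(x,\spanVec\stdFace_1)=0$ (since $x\in\spanVec\stdCone$ and $\stdFace_1=\stdCone$) and so gets $\psi_1(\epsilon,\norm{x})$ exactly, whereas your uniform inductive step yields $\psi_1(2\epsilon,\norm{x})$, which is harmless since it is itself a positive rescaling of $\psi_1$.
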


\begin{proof}
The case $\ell = 1$ is straightforward. 
For the case $\ell \geq 2$, we proceed by induction. 	
When $\ell = 2$, we apply Proposition \ref{prop:res_feas} to 
$\stdCone, \stdFace_1, z_1$ and $\psi _1$. 
Therefore, after positive rescaling $\psi_1$ appropriately, whenever $x \in \spanVec \stdCone$ satisfies the inequalities
$$
\dist(x,\stdCone) \leq \epsilon, \quad \dist(x,\stdSpace + a) \leq \epsilon
$$
we have:
$$
\dist(x, \stdFace _{2})  \leq 
\psi_{{1}} (\epsilon + \dist(x,\spanVec\stdFace_1),\norm{x}).
$$
In this case, since $x \in \spanVec \stdCone$ and $\stdFace _1 = \stdCone$, we 
have $\dist(x,\spanVec \stdFace_1) = 0$.

We now suppose that the lemma holds for chains of length $\hat \ell$ and will show 
that it must hold when the length is $\hat \ell+ 1$. By the inductive hypothesis,
we have that whenever $$
\dist(x,\stdCone) \leq \epsilon, \quad \dist(x,\stdSpace + a) \leq \epsilon
$$
we have:
$$
\dist(x,  \stdFace _{\hat \ell})  \leq 
(\psi _{{\hat \ell}-1}\comp \cdots \comp \psi_{{1}} )(\epsilon,\norm{x}).
$$
From the the definition of $\psi _{{\hat \ell} }	$ and its monotonicity in the first argument we get
\begin{align*}
\dist(x, \stdFace _{\hat \ell+1 })  & \leq \psi _{{\hat \ell} }(\epsilon + \dist(x, \spanVec \stdFace _{\hat \ell}),\norm{x}  ) \\
& \leq  \psi _{{\hat \ell} }(\epsilon +(\psi _{{\hat \ell-1}}\comp \cdots \comp \psi_{{1}} )(\epsilon,\norm{x}),\norm{x} ) \\
& \leq 	(\psi _{{\hat \ell}}\comp \cdots \comp \psi_{{1}} )(\epsilon,\norm{x}),
\end{align*}
where we used the fact that  $\dist(x, \spanVec \stdFace _{\hat \ell}) \leq \dist(x, \stdFace _{\hat \ell})  $ to obtain the second inequality.
\end{proof}
Using Lemma \ref{lem:am_chain}, we obtain one of the main results of this paper.
\begin{theorem}[Error bound for amenable cones]\label{theo:err}
Let $\stdSpace\subseteq \jAlg$ be a subspace and $a \in \jAlg$.
Let $\stdCone$ be a closed convex \emph{amenable cone} and let 
$$
\stdFace _{\ell}  \subsetneq \cdots \subsetneq \stdFace_1 = \stdCone 
$$
be a chain of faces of $\stdCone$  together with $z_i \in \stdFace _i^*\cap\stdSpace^\perp \cap \{a\}^\perp$ such that 
 $(\stdFace _{\ell}, \stdSpace,a)$ satisfies the 
 PPS condition and $\stdFace_{i+1} = \stdFace _i\cap \{z_i\}^\perp$ for every $i$. 
For $i = 1,\ldots, \ell - 1$, let $\psi _{i}$ be a {\FRF} for $\stdFace_{i}$, $z_i$. 
Then, after positive rescaling the $\psi _{i}$, there is a  positive constant $\kappa$ (depending on $\stdCone, \stdSpace, a, \stdFace _{\ell}$) such that if  $x \in \spanVec \stdCone$ satisfies the inequalities
$$
\quad \dist(x,\stdCone) \leq \epsilon, \quad \dist(x,\stdSpace + a) \leq \epsilon,
$$
	we have 
	$$
	\dist\left(x, (\stdSpace + a) \cap \stdCone\right) \leq (\kappa \norm{x} + \kappa )(\epsilon+\varphi(\epsilon,\norm{x})),
	$$
where $
\varphi = \psi _{{\ell-1}}\comp \ldots \comp \psi_{{1}}$, if $\ell \geq 2$. If $\ell = 1$, we let $\varphi$ be the function satisfying $\varphi(\epsilon, \norm{x}) = \epsilon$. 
\end{theorem}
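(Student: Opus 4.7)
The plan is to simply stitch together Lemma~\ref{lem:am_chain} and Corollary~\ref{col:beauty}: the lemma controls how close $x$ is to the terminal face $\stdFace_{\ell}$ of the reducing chain, while the corollary converts closeness to a face satisfying the PPS condition into an error bound for $(\stdSpace+a)\cap \stdCone$. The amenability of $\stdCone$ enters both ingredients.

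First, I would handle the trivial case $\ell = 1$: here $\stdFace_{1} = \stdCone$ satisfies PPS by hypothesis, so $\varphi(\epsilon,\norm{x}) = \epsilon$ and Corollary~\ref{col:beauty} applied to $\stdFace_{1}$ (with $\dist(x,\spanVec \stdFace_{1}) = 0$ for $x \in \spanVec \stdCone$) immediately yields the claimed bound. For $\ell \geq 2$, I would first check the structural fact that $\stdFace_{\ell} \supseteq (\stdSpace+a) \cap \stdCone$: any feasible $y$ lies in $\stdCone = \stdFace_{1}$, and inductively $\inProd{y}{z_i} = \inProd{y-a}{z_i} + \inProd{a}{z_i} = 0$ since $z_i \in \stdSpace^{\perp} \cap \{a\}^{\perp}$, so $y \in \stdFace_{i} \cap \{z_i\}^{\perp} = \stdFace_{i+1}$. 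Thus $\stdFace_{\ell}$ meets the hypotheses of Corollary~\ref{col:beauty}.

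Next, apply Lemma~\ref{lem:am_chain} to the chain and the given FRFs $\psi_{i}$: after positive rescaling, whenever $x \in \spanVec \stdCone$ satisfies $\dist(x,\stdCone)\leq \epsilon$ and $\dist(x,\stdSpace+a) \leq \epsilon$, we obtain
\[
\dist(x,\stdFace_{\ell}) \leq \varphi(\epsilon,\norm{x}),\qquad \varphi = \psi_{\ell-1}\comp \cdots \comp \psi_{1}.
\]
Since $\spanVec \stdFace_{\ell} \supseteq \stdFace_{\ell}$, we have $\dist(x,\spanVec\stdFace_{\ell}) \leq \dist(x,\stdFace_{\ell}) \leq \varphi(\epsilon,\norm{x})$. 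Now invoke Corollary~\ref{col:beauty} for the face $\stdFace_{\ell}$: there exists a constant $\kappa > 0$ (depending on $\stdCone,\stdSpace,a,\stdFace_{\ell}$) so that
\[
\dist(x,(\stdSpace+a)\cap \stdCone) \leq (\kappa\norm{x}+\kappa)(\epsilon+\dist(x,\spanVec \stdFace_{\ell})) \leq (\kappa\norm{x}+\kappa)(\epsilon+\varphi(\epsilon,\norm{x})),
\]
which is precisely the desired conclusion.

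The only delicate point is bookkeeping: the rescalings of the $\psi_i$ in Lemma~\ref{lem:am_chain} depend on the particular $\stdCone,\stdSpace,a$ and on the chain, and the constant $\kappa$ coming from Corollary~\ref{col:beauty} depends on $\stdFace_{\ell}$, so one must be careful to state that all these rescalings and constants are absorbed once and for all into the final $\varphi$ and $\kappa$. Aside from that accounting, the argument is essentially a one-line combination of the two preceding results, and I do not expect any substantive obstacle.
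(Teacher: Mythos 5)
Your proposal is correct and follows essentially the same route as the paper's own proof: the $\ell=1$ case is dispatched via Proposition~\ref{prop:err2}/Corollary~\ref{col:beauty}, and for $\ell\geq 2$ one combines Lemma~\ref{lem:am_chain} (to bound $\dist(x,\stdFace_{\ell})$, hence $\dist(x,\spanVec\stdFace_{\ell})$, by $\varphi(\epsilon,\norm{x})$) with Corollary~\ref{col:beauty} applied to $\stdFace_{\ell}$. Your explicit check that $(\stdSpace+a)\cap\stdCone\subseteq\stdFace_{\ell}$ is a detail the paper leaves implicit, but it is a welcome addition rather than a deviation.
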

\begin{proof}
The case $\ell = 1$ follows from Proposition \ref{prop:err2}, by 
taking $\stdFace = \stdFace_1$.
Now, suppose $\ell \geq 2$.
We apply Lemma \ref{lem:am_chain}, which tells us that, after positive rescaling the $\psi _i$,  
if $x \in \spanVec \stdCone$ satisfies 	
	$$
	\dist(x,\stdCone) \leq \epsilon, \quad \dist(x,\stdSpace + a) \leq \epsilon
	$$
	we have:
	$$
	\dist(x, \stdFace _{\ell})  \leq 
	\varphi(\epsilon,\norm{x}),
	$$
where $\varphi = \psi _{{\ell-1}}\comp \ldots \comp \psi_{{1}} $.
Since $\stdCone$ is amenable and $(\stdFace _{\ell}, \stdSpace,a)$ satisfies the 
PPS condition, we invoke Corollary \ref{col:beauty} which implies that 
	$$
	\dist\left(x, (\stdSpace + a) \cap \stdCone\right) \leq (\kappa \norm{x} + \kappa )(\epsilon+\varphi(\epsilon,\norm{x})),
	$$
for a positive constant $\kappa$ depending on $\stdCone, \stdSpace, a, \stdFace _{\ell}$.
\end{proof}

We now clarify a few aspects of Theorem \ref{theo:err}. 
First of all, Theorem \ref{theo:err} assumes that there is a chain of faces ending in a face $\stdFace _{\ell}$ such that $(\stdFace _{\ell}, \stdCone, a)$ satisfies the PPS condition. 
The existence of such a chain is a nontrivial consequence of facial reduction theory.
In particular, its existence follows from Proposition \ref{prop:fra_poly}.
It also follows from Theorem 3.2 in \cite{WM13} or from Theorem 1 in \cite{pataki_strong_2013}.

Now, that the question of existence of a chain satisfying the requirements of Theorem \ref{theo:err} is settled, we will take a look at efficiency issues. If we fix $(\stdCone, \stdSpace,a)$ there could be several chains of faces that meet the criteria in Theorem \ref{theo:err}. 
Since it is desirable to have an error bound with $\ell$ as small as possible, we will use facial reduction theory to give bounds on $\ell$. Here,  we recall that $\dpp(\stdSpace,a)$ is the minimal number of reducing directions needed to find a face that satisfies the PPS condition and 
$\ds(\stdSpace,a)$ is the singularity degree, see Section \ref{sec:sing}.

\begin{proposition}[Efficiency of the error bound]\label{prop:effic}
Let $\stdCone = \stdCone^1\times \ldots \times \stdCone^s$, where 
each $\stdCone^i$ is a closed convex cone.
Suppose $(\stdCone, \stdSpace, a)$ is feasible.
Then there is a chain of faces of $\stdCone$ 
$$
\stdFace_{\dpp(\stdSpace,a)+1} \subsetneq \cdots \subsetneq  \stdFace _1  = \stdCone  
$$
satisfying the requirements of Theorem \ref{theo:err} such that the following bounds are satisfied
\begin{enumerate}[label=$(\roman*)$]
\item $\dpp(\stdSpace,a) \leq  \sum _{i=1}^s \distP(\stdCone^i)$
\item $\dpp(\stdSpace,a) \leq \dim(\stdSpace^\perp\cap \{a\}^\perp) $
\item $\dpp (\stdSpace,a) \leq \ds(\stdSpace,a) $.
\end{enumerate}
\end{proposition}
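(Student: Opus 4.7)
The plan is to invoke Proposition \ref{prop:fra_poly} to obtain a chain meeting the requirements of Theorem \ref{theo:err}, and then derive each of the three bounds by exploiting elementary properties of the reducing directions $z_i$. The existence of a chain of length $\dpp(\stdSpace,a)+1$ that satisfies the Theorem \ref{theo:err} requirements is essentially the definition of $\dpp(\stdSpace,a)$, so the substance of the proposition lies in the three numerical bounds.

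For item $(i)$, I would apply Proposition \ref{prop:fra_poly} directly to $(\stdCone,\stdSpace,a)$. This yields a chain $\stdFace_\ell \subsetneq \cdots \subsetneq \stdFace_1 = \stdCone$ with reducing directions $z_i \in \stdFace_i^* \cap \stdSpace^\perp \cap \{a\}^\perp$ such that $\stdFace_{i+1} = \stdFace_i \cap \{z_i\}^\perp$ and $(\stdFace_\ell,\stdSpace,a)$ satisfies the PPS condition, with $\ell - 1 \leq \sum_{i=1}^s \distP(\stdCone^i)$. By definition of $\dpp(\stdSpace,a)$ as the minimum number of reducing directions needed to reach a face satisfying PPS, we immediately get $\dpp(\stdSpace,a) \leq \ell - 1 \leq \sum_{i=1}^s \distP(\stdCone^i)$.

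For item $(ii)$, I would argue that the reducing directions in \emph{any} minimizing chain are automatically linearly independent in $\jAlg$. Suppose, for contradiction, that $z_k \in \spanVec\{z_1,\ldots,z_{k-1}\}$ for some $k$. Since every $x \in \stdFace_k = \stdCone \cap \bigcap_{i<k}\{z_i\}^\perp$ already satisfies $\inProd{x}{z_i} = 0$ for $i < k$, it must satisfy $\inProd{x}{z_k} = 0$ as well, so $\stdFace_{k+1} = \stdFace_k \cap \{z_k\}^\perp = \stdFace_k$, contradicting the strict inclusion. Thus $z_1,\ldots,z_{\dpp(\stdSpace,a)}$ are linearly independent vectors in the subspace $\stdSpace^\perp \cap \{a\}^\perp$, which forces $\dpp(\stdSpace,a) \leq \dim(\stdSpace^\perp \cap \{a\}^\perp)$.

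For item $(iii)$, I would invoke the definition of singularity degree. By definition, classical facial reduction produces a chain of length $\ds(\stdSpace,a)+1$ terminating at the minimal face $\minFace$, together with $\ds(\stdSpace,a)$ reducing directions in $\stdSpace^\perp \cap \{a\}^\perp$. Since $(\minFace,\stdSpace,a)$ satisfies Slater's condition by property $(b)$ of the minimal face and Slater is a particular case of the PPS condition (Section \ref{sec:cq}), this chain also satisfies the Theorem \ref{theo:err} requirements. Consequently $\dpp(\stdSpace,a) \leq \ds(\stdSpace,a)$. The main (albeit minor) obstacle is the observation in item $(ii)$ that strict descent of the chain forces the $z_i$ to be linearly independent without any additional construction; everything else is a direct appeal to Proposition \ref{prop:fra_poly} and the definitions of $\dpp(\stdSpace,a)$ and $\ds(\stdSpace,a)$.
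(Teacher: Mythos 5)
Your proposal is correct and follows essentially the same route as the paper: item $(i)$ via Proposition \ref{prop:fra_poly} and the minimality of $\dpp(\stdSpace,a)$, item $(ii)$ via linear independence of the $z_i$ in $\stdSpace^\perp\cap\{a\}^\perp$ (the paper additionally spells out that strictness of the inclusions itself follows from minimality of the chain, which your contradiction implicitly relies on), and item $(iii)$ by noting that Slater's condition is a special case of the PPS condition.
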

\begin{proof}
By definition, there exists at least one chain of length 
$\dpp(\stdSpace,a)+1$ satisfying the requirements of Theorem \ref{theo:err}.
The bound in item~$(i)$ follows from Proposition~\ref{prop:fra_poly}.
We will now prove item $(ii)$. Let 
\begin{equation}\label{eq:pro_eff:1}
\stdFace_{\dpp(\stdSpace,a)+1} \subsetneq \cdots \subsetneq  \stdFace _1  = \stdCone  
\end{equation}
be a chain of faces of $\stdCone$  together with $z_i \in \stdFace _i^*\cap\stdSpace^\perp \cap \{a\}^\perp$ such that 
 $\stdFace_{i+1} = \stdFace _i\cap \{z_i\}^\perp$ for every $i$. 
The inclusions in \eqref{eq:pro_eff:1} must be strict, otherwise we would be able to remove 
some faces of the chain, shrink it and contradict the minimality of $\dpp(\stdSpace,a)$. 
Finally, we note that for $i > 1$,  if $z_{i}$ belongs to the space 
spanned by $\{z_{1},\ldots, z_{i-1} \}$, then we would have $\stdFace_{i+1} = \stdFace_{i}$. Therefore, $\{z_1,\ldots, z_{\dpp(\stdSpace,a) } \}$ is a linear independent set contained in $\stdSpace^\perp\cap \{a\}^\perp$.

Item $(iii)$ holds because the PPS condition is less strict than 
Slater's condition, so a chain of faces ending with a face for which 
Slater's condition holds will also satisfy the requirements of Theorem \ref{theo:err}.
\end{proof}
In particular, Proposition \ref{prop:effic} shows that the number of function compositions appearing in Theorem \ref{theo:err} can be taken 
to be no more than the singularity degree of $(\stdCone, \stdSpace,a)$.

\begin{remark}\label{rem:dist}
	Let $d \in \reInt \stdCone$ and consider the 
	generalized eigenvalue function $\eig{d}{\stdCone}(\cdot)$ defined in 
	Section \ref{sec:eig}. 
	From Proposition \ref{prop:sur}, there is a 
	constant $\kappa' > 0$ depending on $d$ such that 
	$$
	\eig{d}{\stdCone}(x) \geq -\epsilon \quad \Rightarrow \quad \dist(x, \stdCone) \leq \kappa'\epsilon,
	$$
	for all $x \in \spanVec \stdCone$.
	Therefore, under the setting of Theorem \ref{theo:err}, we get that the inequalities
	$$
	\eig{d}{\stdCone}(x) \geq -\epsilon, \quad \dist(x,\stdSpace + a) \leq \epsilon
	$$
	imply
	$$
	\dist\left(x, (\stdSpace + a) \cap \stdCone\right) \leq (\kappa \norm{x} + \kappa )((\kappa' + 1)\epsilon+\varphi((\kappa' + 1)\epsilon,\norm{x})),
	$$
	where $\kappa$ is some positive constant.
	Noting that $\varphi((\kappa' + 1)\epsilon,\norm{x})$ is a positive rescaling 
	of $\varphi(\epsilon, \norm{x})$, we see that Theorem \ref{theo:err} is still valid if we replace ``$\dist(x, \stdCone) \leq \epsilon$'' by 
	``$\eig{d}{\stdCone}(\cdot) \geq -\epsilon$''.

Similarly, if $\stdSpace+a$ is described as the solution set of 
some system of linear equalities ``$\stdMap x = b$'', we can substitute ``$\dist(x, \stdCone) \leq \epsilon$'' by some quantity measuring the error with 
respect that system. For instance, we could use 
``$\sum _{i=1}^m |b_i - \stdMap _i (x)| \leq \epsilon$'', where the $\stdMap _i$ are such that 
$\stdMap(x) = (\stdMap _1 (x), \ldots, \stdMap _{m}(x))$.
	
\end{remark}
Next, we will make a brief detour and generalize an observation made by Sturm in \cite{ST00}.
He noticed that if $(\PSDcone{n},\stdSpace,a)$ is such that $\minFace = \{0\}$, then a Lipschitzian error bound holds, see (2.5) in \cite{ST00}.
First, we need the following auxiliary result.

\begin{lemma}\label{lemma:reint_dual}
	Let $z \in \reInt \stdCone^*$. 
	Then, there is a positive constant  $\kappa$ such that 
	$$
	\norm{x} \leq \kappa \inProd{x}{z}, \qquad \forall x \in \stdCone.
	$$
\end{lemma}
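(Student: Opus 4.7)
The plan is to use a homogeneity-and-compactness argument. First, I would recall that by Assumption~\ref{asmp:1}, $\stdCone$ is pointed, so $\lineality \stdCone = \{0\}$. Combined with the characterization of the relative interior of the dual cone, $z \in \reInt \stdCone^*$ means that $\inProd{x}{z} > 0$ for every $x \in \stdCone \setminus \lineality \stdCone$, which here reduces to: $\inProd{x}{z} > 0$ for every nonzero $x \in \stdCone$.

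Next, I would exploit homogeneity. Both sides of the desired inequality $\norm{x} \leq \kappa \inProd{x}{z}$ are positively homogeneous of degree one in $x$, so it suffices to establish it on the compact slice
\[
S = \{x \in \stdCone \mid \norm{x} = 1\}.
\]
Note $S$ is compact because $\stdCone$ is closed and $S$ is a bounded intersection. The map $x \mapsto \inProd{x}{z}$ is continuous on $S$ and, by the previous paragraph, strictly positive there. Hence it attains a positive minimum $m > 0$ on $S$.

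Finally, for any nonzero $x \in \stdCone$, scaling yields $x/\norm{x} \in S$, so $\inProd{x/\norm{x}}{z} \geq m$, that is, $\norm{x} \leq (1/m) \inProd{x}{z}$. The inequality is trivial at $x = 0$, so taking $\kappa = 1/m$ finishes the argument. The only nonroutine step is the reduction from $z \in \reInt \stdCone^*$ to strict positivity on $\stdCone \setminus \{0\}$, which relies precisely on pointedness of $\stdCone$; all remaining steps are a standard compactness-plus-homogeneity packaging.
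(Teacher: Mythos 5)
Your proof is correct, but it packages the argument differently from the paper. You normalize on the unit slice $S=\{x\in\stdCone \mid \norm{x}=1\}$, which is compact for free, and push all the difficulty into the claim that $\inProd{x}{z}>0$ for every nonzero $x\in\stdCone$, which you cite as a known characterization of $\reInt\stdCone^*$ (together with pointedness, since $(\stdCone^*)^\perp=\lineality\stdCone=\{0\}$). The paper instead normalizes on the slice $C=\{x\in\stdCone\mid \inProd{x}{z}=1\}$ and does the work in proving that $C$ is compact: it shows $\reCone C=\{x\in\stdCone\mid\inProd{x}{z}=0\}\subseteq(\stdCone^*)^\perp$ via Rockafellar's proper separation theorem (Theorem 20.2), and then uses pointedness, taking $\kappa=\sup_{u\in C}\norm{u}$. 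So the two proofs hinge on exactly the same fact; the paper derives it inline from separation, whereas you invoke it as a standard characterization. If you want your argument self-contained, the cited step is easy to supply: for $x\in\stdCone$ with $\inProd{x}{z}=0$ and any $w\in\stdCone^*$, relative interiority gives $(1+\epsilon)z-\epsilon w\in\stdCone^*$ for small $\epsilon>0$, whence $\inProd{x}{w}\le 0$ and also $\inProd{x}{w}\ge 0$, so $x\in(\stdCone^*)^\perp=\{0\}$. Your route buys a slightly more elementary compactness step; the paper's buys an explicit reduction to a quotable separation theorem. Both yield the same constant up to normalization, and the degenerate case $\stdCone=\{0\}$ is trivial in either version.
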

\begin{proof}
	Let $C = \{x \in \stdCone \mid \inProd{x}{z} = 1  \}$.
	The recession cone of $C$ is the set $$\reCone C = \{x \in \stdCone \mid \inProd{x}{z} = 0 \}.$$
	If $x\in \stdCone$, $x \not \in ({\stdCone^{*}})^\perp$ and $\inProd{x}{z} = 0$, then $\{x\}^\perp$ is a hyperplane that properly separates $z$ from $\stdCone^*$. Such a hyperplane exists if and only if $z \not \in \reInt \stdCone^*$, see Theorem 20.2 in \cite{Roc70}.
	We conclude that $\reCone C \subseteq (\stdCone^*)^\perp$.
	
	Since $\lineality \stdCone = (\stdCone^*)^\perp$ and $\stdCone$ is pointed (Assumption~\ref{asmp:1}), we have  $\reCone C = \{0\}$. 
	Therefore, $C$ must be compact. Let $\kappa = \sup _{u \in C}\norm{u}$. Then for nonzero $x \in \stdCone$, we have 
	$$
	\frac{\norm{x}}{\inProd{x}{z}} \leq \kappa. 
	$$
\end{proof}

\begin{proposition}[Error bound for trivial intersections]\label{prop:err_z}
Suppose that $(\stdCone,\stdSpace,a)$
is such that
$$(\stdSpace+a)\cap \stdCone =\{0\}.$$	 Then, there exists a positive constant $\kappa$ (depending on $\stdCone,\stdSpace,a$) such that 
	$$
	\dist(x,\stdCone) \leq \epsilon,\quad \dist(x,\stdSpace + a) \leq \epsilon \quad \Rightarrow \quad \norm{x} \leq \kappa \epsilon.
	$$
\end{proposition}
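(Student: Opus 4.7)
The plan is to reduce to an application of Lemma~\ref{lemma:reint_dual} by producing a $z$ that lies simultaneously in $\reInt \stdCone^*$ and $\stdSpace^\perp$. First I would observe that the hypothesis $(\stdSpace+a)\cap \stdCone = \{0\}$ forces $0 \in \stdSpace+a$, hence $-a\in \stdSpace$, so that $\stdSpace+a=\stdSpace$ as sets and, consequently, $\stdCone \cap \stdSpace = \{0\}$. This simplification is the key book-keeping step: because $a\in \stdSpace$, any $z\in \stdSpace^\perp$ will automatically satisfy $\inProd{z}{a}=0$, and the set $\stdSpace + a$ can be treated as the subspace $\stdSpace$ throughout.

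Next I would locate the desired $z$. Since $\stdCone$ is pointed, it admits a compact convex base $B$ (take the slice of $\stdCone$ by the hyperplane $\{x:\inProd{x}{d}=1\}$ for some $d\in\interior \stdCone^*$, which exists because pointedness of $\stdCone$ yields $\spanVec \stdCone^* = \jAlg$). From $\stdCone \cap \stdSpace = \{0\}$ and $0\notin B$ we get $B\cap \stdSpace = \emptyset$. As $B$ is compact and $\stdSpace$ is closed, strict separation (e.g.\ Theorem~11.4 in \cite{Roc70}) produces $z\in \jAlg$ with $\inf_{b\in B}\inProd{z}{b}>\sup_{s\in \stdSpace}\inProd{z}{s}$; the supremum over a subspace is finite only if $z\in \stdSpace^\perp$, in which case it equals $0$. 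Hence $\inProd{z}{y}>0$ for every nonzero $y\in \stdCone$, which is precisely the condition that places $z$ in $\reInt \stdCone^*$ (this is the same characterization implicit in the proof of Lemma~\ref{lemma:reint_dual}).

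With $z\in \reInt \stdCone^*\cap \stdSpace^\perp$ secured, Lemma~\ref{lemma:reint_dual} supplies $\kappa_0>0$ such that $\norm{v}\leq \kappa_0 \inProd{v}{z}$ for all $v\in \stdCone$. Given $x$ with $\dist(x,\stdCone)\leq \epsilon$ and $\dist(x,\stdSpace+a)\leq \epsilon$, pick $v\in \stdCone$ and $w\in \stdSpace+a=\stdSpace$ with $\norm{x-v}\leq \epsilon$ and $\norm{x-w}\leq \epsilon$. Since $w\in \stdSpace$ and $z\in \stdSpace^\perp$, $\inProd{z}{w}=0$, so
\[
\inProd{z}{v} = \inProd{z}{v-x}+\inProd{z}{x-w}+\inProd{z}{w} \leq 2\epsilon\norm{z}.
\]
Therefore $\norm{v}\leq 2\kappa_0\norm{z}\,\epsilon$, and $\norm{x}\leq \norm{v}+\epsilon \leq (1+2\kappa_0\norm{z})\epsilon$, so the claim holds with $\kappa:=1+2\kappa_0\norm{z}$.

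The only nontrivial step is establishing the existence of $z\in \reInt \stdCone^*\cap \stdSpace^\perp$; everything else is bookkeeping. This step leans essentially on pointedness of $\stdCone$ (so that a compact base exists and strict separation from the subspace $\stdSpace$ is possible), which matches Assumption~\ref{asmp:1}.
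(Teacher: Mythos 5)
Your proof is correct and follows essentially the same route as the paper: reduce to $\stdSpace+a=\stdSpace$ and $\stdCone\cap\stdSpace=\{0\}$, produce $z\in(\reInt\stdCone^*)\cap\stdSpace^\perp$, and conclude via Lemma~\ref{lemma:reint_dual}. The only difference is in how $z$ is obtained: the paper simply invokes the Gordan--Stiemke theorem, whereas you prove that existence statement from scratch by strictly separating a compact base of the pointed cone $\stdCone$ from the subspace $\stdSpace$ --- a valid, self-contained substitute for the citation.
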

\begin{proof}
Since $(\stdSpace+a)\cap \stdCone =\{0\}$ holds, we have, in particular, 
$0 \in 	\stdSpace+a$. Therefore, $\stdSpace+a = \stdSpace$.
We conclude that $\stdSpace\cap \stdCone = \{0\}$. 
By the Gordan-Stiemke's Theorem (see Corollary 2 in
Luo, Sturm and Zhang \cite{Luo97dualityresults}), $\stdSpace\cap \stdCone = \{0\}$ holds if and only if there exists $z \in (\reInt \stdCone^*) \cap \stdSpace^\perp$. 

Since $\dist(x,\stdSpace) \leq \epsilon$, there exists $u$ such that $\norm{u} \leq \epsilon$ and $x+u \in \stdSpace$. 
Since $\inProd{x+u}{z} = 0$, we conclude that 
	\begin{equation}
	\inProd{x}{z} \leq \epsilon\norm{z}.\label{eq:prop:err_z1:1}
	\end{equation}
	Since $\dist(x,\stdCone) \leq \epsilon$, there exists 
	$v$ such that $\norm{v} \leq \epsilon$ and $x+v\in \stdCone$. 
	By Lemma \ref{lemma:reint_dual} and \eqref{eq:prop:err_z1:1}, there exists a positive
	constant $\kappa _1$ such that
	\begin{equation}
	\norm{x} - \norm{v}  \leq \norm{x+v} \leq \kappa _1\inProd{x+v}{z} \leq 2\kappa _1\norm{z}\epsilon. \label{eq:prop:err_z1:2}
	\end{equation} 
	From \eqref{eq:prop:err_z1:2}, we conclude that the proposition holds with $\kappa = \epsilon(1 + 2\kappa_1 \norm{z})$. 
\end{proof}

\subsection{Error bounds for symmetric cones}\label{sec:sym}
In this subsection, we use Theorem \ref{theo:err} to prove error bounds for symmetric cones. First, we need to review a few aspects of Jordan algebras.
More 
details can be found in the books by Koecher \cite{K99}, Faraut and Kor\'anyi \cite{FK94} and also 
in the survey article by Faybusovich \cite{FB08}.
A \emph{Euclidean Jordan algebra} is a finite dimensional real vector space $\jAlg$ equipped with a bilinear product $\jProd{}{}:\jAlg\times \jAlg \to \jAlg$ (the Jordan product) and an inner product $\inProd{\cdot}{\cdot}$ satisfying the following axioms: 
\begin{enumerate}[label=$(\arabic*)$]
	\item $\jProd{x}{y} = \jProd{y}{x}$,
	\item $\jProd{x}({\jProd{x^2}{y}}) = \jProd{x^2}({\jProd{x}{y}})$, where $x^2 = \jProd{x}{x}$,
	\item $\inProd{\jProd{x}{y}}{z} = \inProd{x}{\jProd{y}{z}}$,
\end{enumerate}
for all $x,y,z \in \jAlg$. We will denote the 
identity element of $\jAlg$ by $\stdInt$ and 
we recall that $\jProd{\stdInt}{x} = x$, for all $x \in \jAlg$. The cone of squares associated to a Jordan algebra is given by 
$$
\stdCone = \{x^2 \mid x \in \jAlg \}.
$$
Under this setting, $\stdCone$ becomes a symmetric cone, i.e., a homogeneous\footnote{A cone is homogeneous if for every $x,y \in \reInt \stdCone$ there is a linear bijection $Q$ such that $Q(x) = y$ and $Q(\stdCone) = \stdCone$.} self-dual cone.  Reciprocally, every symmetric cone arises as the cone of squares of some Euclidean Jordan algebra. Key examples of symmetric cones include the $n\times n$ positive 
semidefinite matrices $\PSDcone{n}$, the nonnegative orthant $\Re^n_+$ and the second order cone. %

We say that $c \in \jAlg$ is an \emph{idempotent} if 
$\jProd{c}{c} = c$. Morover, $c$ is \emph{primitive} if it is nonzero and there is no way of writing 
$
c = a+b,
$
with  nonzero idempotents $a$ and $b$ satisfying $\jProd{a}{b} = 0$. We can now state the spectral theorem.
\begin{theorem} [Spectral Theorem, see Theorem III.1.2 in \cite{FK94}]\label{theo:spec}
	Let $(\jAlg, \jProd{}{} )$ be a Euclidean Jordan algebra and let $x \in \jAlg$. Then there are	 primitive idempotents $c_1, \dots, c_r$ satisfying
	$c_1 + \cdots + c_r  = \stdInt$, $\jProd{c_i}{c_j} = 0$ for $i \neq j$ and  
	 unique real numbers $\eigJ _1, \ldots, \eigJ _r$ satisfying
	\begin{equation}		
	x = \sum _{i=1}^r \eigJ _i c_i \label{eq:dec}.
	\end{equation}
\end{theorem}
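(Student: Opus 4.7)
The plan is to reduce the problem to the well-behaved setting of a commutative associative subalgebra generated by $x$ and $\stdInt$, and then extract idempotents via polynomial calculus. I would proceed in roughly four stages.

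First, I would establish power-associativity of $\jAlg$, which follows from axioms $(1)$ and $(2)$ by a standard (though not trivial) induction showing that $\jProd{x^m}{x^n} = x^{m+n}$ for all $m,n \geq 0$. This makes the subalgebra $\Re[x]$ generated by $x$ and $\stdInt$ a genuine commutative associative algebra under the Jordan product, and lets me speak unambiguously of the minimal polynomial $\mu$ of $x$. I would let $k$ be its degree and $\mu_1,\dots,\mu_s$ the distinct real roots; since $\jAlg$ is a real algebra arising from an inner product structure, one also needs to argue that $\mu$ factors over $\Re$, which is where positivity (self-adjointness of the multiplication operator $L_x$ with respect to $\inProd{\cdot}{\cdot}$, via axiom $(3)$) enters.

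Second, I would apply a Lagrange-interpolation argument inside $\Re[x]$. Define polynomials $p_j(t) = \prod_{i \neq j} (t-\mu_i)^{m_i}/\prod_{i \neq j}(\mu_j-\mu_i)^{m_i}$, where $m_i$ is the multiplicity of $\mu_i$ in $\mu$. Setting $f_j = p_j(x)$ and using $\mu(x)=0$, I would verify that $\jProd{f_j}{f_j} = f_j$, $\jProd{f_i}{f_j}=0$ for $i \neq j$, and $\sum_j f_j = \stdInt$. This produces an orthogonal decomposition of the identity into (not necessarily primitive) idempotents, together with the coarse spectral form $x = \sum_j \mu_j f_j + \text{nilpotent part}$, where a short computation using $(x-\mu_j\stdInt)^{m_j} f_j = 0$ plus self-adjointness of $L_x$ forces the nilpotent part to vanish (a symmetric operator with nilpotent action on its image is zero).

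Third, I would refine each $f_j$ into primitive idempotents. The natural step is induction on $\dim\jAlg$: if $f_j$ is not primitive, write $f_j = a+b$ with $a,b$ nonzero orthogonal idempotents in the subalgebra $\jAlg(f_j,1) := \{y \in \jAlg \mid \jProd{f_j}{y} = y\}$, which inherits the Jordan algebra structure, and recurse. Relabelling the resulting primitive idempotents $c_1,\dots,c_r$ and assigning $\eigJ_i = \mu_j$ whenever $c_i \leq f_j$ yields the decomposition \eqref{eq:dec}. Uniqueness of the $\eigJ_i$ (up to relabelling) follows by noting that the distinct values among the $\eigJ_i$ must be exactly the roots of $\mu$, and that the multiplicity with which $\eigJ_i$ appears is $\tr L_{f_j}$, an invariant of $x$.

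The main obstacle is the refinement step: proving that every idempotent admits a decomposition into primitive orthogonal idempotents summing to it. This is essentially the existence of a Jordan frame, and carrying it out cleanly requires either an induction on dimension together with a careful verification that the Peirce-type subalgebra $\jAlg(f_j,1)$ is again a Euclidean Jordan algebra with the restricted inner product, or equivalently a minimisation argument producing an idempotent of smallest rank. Everything else, once power-associativity and the self-adjointness of $L_x$ are in hand, reduces to polynomial identities in $\Re[x]$.
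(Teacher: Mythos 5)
The paper itself offers no proof of this statement --- it is imported verbatim from Faraut--Kor\'anyi (Theorem III.1.2 of \cite{FK94}) --- so the only meaningful comparison is with the textbook argument, and your sketch does follow that classical route: power-associativity, the associative commutative subalgebra $\Re[x]$, idempotents from polynomial calculus, and refinement to primitive idempotents inside the Peirce subalgebra $V(c,1)$. Two steps, however, would fail as written. First, your Lagrange polynomials $p_j$ do not produce idempotents when the minimal polynomial $\mu$ has a repeated root: in $\Re[t]/(t^2(t-1))$, for instance, the class of $1-t$ is not idempotent, so the verification $f_j\circ f_j=f_j$ breaks down exactly in the case your multiplicities $m_i>1$ are meant to cover. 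The repair is to prove \emph{first} that $\mu$ has simple real roots --- e.g.\ $L_x$ leaves $\Re[x]$ invariant and is symmetric there by axiom $(3)$, or equivalently $\Re[x]$ is a commutative associative algebra carrying a positive definite associative bilinear form and hence is semisimple and split --- after which the interpolation computation is clean and your separate ``nilpotent part'' argument becomes unnecessary.

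Second, the uniqueness argument is pinned to the wrong invariant: the multiplicity of $\mu_j$ is \emph{not} $\tr L_{f_j}$. For a rank-$k$ projection $P$ in $\mathcal{S}^n$ the Peirce decomposition gives $\tr L_P = k(n+1)/2$, and in a non-simple algebra the relation between $\tr L_c$ and the number of primitive idempotents below $c$ even differs from one simple factor to another, so this quantity cannot count the $c_i$ with $\eigJ_i=\mu_j$. What you need is that the number of primitive idempotents in \emph{any} orthogonal decomposition of $f_j$ is an invariant (its rank, equivalently its Jordan trace, since primitive idempotents have Jordan trace one), and, to match the statement's fixed $r$, that every maximal system of orthogonal primitive idempotents summing to $\stdInt$ has exactly $r=\matRank\stdCone$ elements; both facts require a further piece of structure theory (or the generic minimal polynomial, as in \cite{FK94}) that your sketch currently assumes implicitly. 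With those two repairs the outline matches the cited proof.
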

The $\lambda _i$ appearing in Theorem \ref{theo:spec} are called 
the \emph{eigenvalues} of $x$. We will write $\lambda _{\min}(x)$ and $\lambda _{\max}(x)$ for 
the minimum and maximum eigenvalues of $x$, respectively.
For an element $x \in \jAlg$, we define the \emph{rank} of $x$ as the number of nonzero eigenvalues.
The trace of $x$ is defined as the sum of eigenvalues, i.e., 
$$
\tr x = \sum _{i=1}^r \lambda _i.
$$
The rank of $\stdCone$ is defined by 
$$
\matRank \stdCone = \max \{ \matRank x \mid x \in \stdCone \}.
$$
With that, we have $\matRank \stdCone = r = \tr (\stdInt)$. 

Throughout Section~\ref{sec:sym} and its subsections, we will assume that $\jAlg$ is a Euclidean Jordan algebra and that the inner product is given by 
\begin{equation}
\inProd{x}{y} = \tr(\jProd{x}{y}). \label{eq:def_inn}
\end{equation}
With that, the corresponding norm is
\begin{equation}
\norm{x} = \sqrt{\tr(x^2)} = \left({\sum _{i=1}^r \lambda _i^2}\right)^{1/2}\label{eq:norm}
\end{equation}
Under this inner product, the primitive idempotents $c_i$ appearing 
in Theorem~\ref{theo:spec} satisfy $\inProd{c_i}{c_j} = 0$ for $i \neq j$ and $\norm{c_i} = 1$.

The next result follows
from various propositions that appear in 
\cite{FK94}, such as Proposition~III.2.2 and Exercise 3 in Chapter III. See also Equation (10) in 
\cite{Sturm2000}. 
\begin{proposition}\label{prop:aux}
	Let $x \in \jAlg$.
	\begin{enumerate}[label=$(\roman*)$]
		\item $x \in \stdCone$ if and only if the eigenvalues of $x$ are nonnegative. \label{paux:1}
		\item $x \in \reInt \stdCone$ if and only if the eigenvalues of $x$ are positive. \label{paux:2}
		\item Suppose $x,y \in \stdCone$. Then, $\jProd{x}{y} = 0$ if and only if 
		$\inProd{x}{y} = 0$. \label{paux:4}
	\end{enumerate}
\end{proposition}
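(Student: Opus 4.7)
The plan is to establish each part by appealing to the spectral theorem just stated (Theorem~\ref{theo:spec}) together with standard identities for idempotents in a Jordan frame, namely $c_i^2 = c_i$ and $\jProd{c_i}{c_j} = 0$ for $i \neq j$.

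For item~(i), the plan is to construct a square root. Given the spectral decomposition $x = \sum_i \lambda_i c_i$ with all $\lambda_i \geq 0$, the element $y = \sum_i \sqrt{\lambda_i}\, c_i$ satisfies
\[
y^2 = \sum_i \lambda_i c_i^2 + 2\sum_{i<j}\sqrt{\lambda_i\lambda_j}\,\jProd{c_i}{c_j} = \sum_i \lambda_i c_i = x,
\]
so $x \in \stdCone$. Conversely, if $x = y^2$, I would apply Theorem~\ref{theo:spec} to $y$ to get $y = \sum_j \mu_j d_j$ with $\{d_j\}$ a Jordan frame, and the same computation shows $x = \sum_j \mu_j^2 d_j$, exhibiting the eigenvalues of $x$ as the nonnegative numbers $\mu_j^2$. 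For item~(ii), the plan is to combine (i) with the Lipschitz continuity of the smallest generalized eigenvalue established in Section~\ref{sec:eig}: the set $\{x\mid \lambda_{\min}(x)>0\}$ is open, nonempty (it contains $\stdInt$), and contained in $\stdCone$, hence contained in $\interior\stdCone=\reInt\stdCone$. Conversely, if $x\in\stdCone$ has a zero eigenvalue, picking a primitive idempotent $c$ from the spectral decomposition attached to that eigenvalue, the element $x - \epsilon c$ has a negative eigenvalue for every $\epsilon>0$, so by (i) it lies outside $\stdCone$; hence $x$ is on the relative boundary.

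Item~(iii) is the delicate part. The forward direction is immediate from $\inProd{x}{y} = \tr(\jProd{x}{y})$. For the converse, my plan is the following. Writing $x = \sum_i \lambda_i c_i$ with $\lambda_i \geq 0$, self-duality of $\stdCone$ gives $\inProd{c_i}{y} \geq 0$ for each $i$, so
\[
0 = \inProd{x}{y} = \sum_i \lambda_i \inProd{c_i}{y}
\]
is a sum of nonnegative terms and each must vanish. It then suffices to show: if $c$ is a primitive idempotent, $y\in\stdCone$, and $\inProd{c}{y}=0$, then $\jProd{c}{y}=0$. To establish this, I would invoke the Peirce decomposition $y = y_1 + y_{1/2} + y_0$ relative to $c$, where $y_k\in V_k(c) = \{z\mid \jProd{c}{z}=kz\}$. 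Since $y\in\stdCone$, the components $y_1$ and $y_0$ lie in the sub-cones of squares of the corresponding Peirce subalgebras, and the identity $\inProd{c}{y} = \tr(y_1)$ forces $y_1=0$ by positivity; a further standard argument using $\jProd{y_{1/2}}{y_{1/2}}\in V_1(c)+V_0(c)$ together with $y\in\stdCone$ pins down $y_{1/2}=0$. Summing over $i$ with $\lambda_i>0$ then yields $\jProd{x}{y}=0$.

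The main obstacle will be the backward direction of (iii), which is not a one-line computation from the spectral theorem and genuinely requires the machinery of the Peirce decomposition; parts (i) and (ii) reduce to routine manipulations once Theorem~\ref{theo:spec} is in hand.
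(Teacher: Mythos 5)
The paper does not actually prove Proposition~\ref{prop:aux}; it is presented as a known fact with a pointer to Proposition~III.2.2 and Exercise~3 of Chapter~III in \cite{FK94} and to Equation~(10) of \cite{Sturm2000}. So you are reconstructing a proof the paper outsources to the literature, and what you reconstruct is indeed the standard argument. Items (i) and (ii) are correct: the square-root construction, the computation $y^2=\sum_j\mu_j^2 d_j$, and the openness/boundary argument all work. One small caution in (ii): invoking the Lipschitz continuity of $\eig{\stdInt}{\stdCone}$ from Section~\ref{sec:eig} presupposes $\stdInt\in\interior\stdCone$, which is close to what you are proving; it is cleaner to note first that $\inProd{\stdInt}{y^2}=\norm{y}^2>0$ for $y^2\neq 0$, so $\stdInt\in\interior\stdCone^*=\interior\stdCone$, after which the identification of $\lambda_{\min}$ with Renegar's function (via your item (i)) and its Lipschitz continuity are legitimate.

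In (iii), the forward direction, the reduction to a single primitive idempotent $c$ with $\inProd{c}{y}=0$, and the conclusion $y_1=0$ are all correct. The one genuine gap is the step you explicitly defer: ``a further standard argument \dots pins down $y_{1/2}=0$.'' That step is the entire nontrivial content of the proposition, so it cannot be left as a pointer; fortunately it closes in two lines along the direction you indicate. With $y=y_{1/2}+y_0$ and $\inProd{y}{c}=0$, take any $w\in V(c,1/2)$ and $t\in\Re$; since $\jProd{c}{w}=w/2$ one computes $(tc+w)^2=t^2c+tw+w^2\in\stdCone$, whence by self-duality
\begin{equation*}
0\;\leq\;\inProd{y}{(tc+w)^2}\;=\;t^2\inProd{y}{c}+t\inProd{y_{1/2}}{w}+\inProd{y}{w^2}\;=\;t\inProd{y_{1/2}}{w}+\inProd{y}{w^2}.
\end{equation*}
An affine function of $t$ bounded below on all of $\Re$ has zero slope, so $\inProd{y_{1/2}}{w}=0$ for every $w\in V(c,1/2)$; taking $w=y_{1/2}$ gives $y_{1/2}=0$, hence $y\in V(c,0)$ and $\jProd{c}{y}=0$. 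Summing $\lambda_i\,\jProd{c_i}{y}$ over the indices with $\lambda_i>0$ then yields $\jProd{x}{y}=0$ as you state. (An equivalent finish uses $Q_{tc+(\stdInt-c)}(y)=t^2y_1+ty_{1/2}+y_0\in\stdCone$ for all $t$, letting $t\to\pm\infty$; this is the quadratic-representation machinery the paper itself deploys in the proof of Theorem~\ref{theo:sym_am}.)
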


We will also need the following well-known fact on the function $\dist(\cdot, \stdCone)$.
Given $x \in \jAlg$, we consider the spectral decomposition given by Theorem~\ref{theo:spec}. Then, the element in $\stdCone$ closest to $x$ is given by 
$$
 y = \sum _{i=1}^r \max(\lambda _{i},0)c_i,
$$
where the $c_i$ are the primitive idempotents associated to 
$\lambda _{i}$ (a proof can be found in Proposition 2.2 of \cite{LFF17}). Therefore,

\begin{equation}
	\dist(x,\stdCone)^2 = \sum _{i=1}^r \max(-\lambda _{i}(x),0)^2. \label{prop:dist_sym_2}
\end{equation}
Given $x \in \jAlg$, the \emph{Lyapunov operator} of $x$ is the linear function $L_x : \jAlg \to \jAlg$ satisfying 
$L_x(y) = \jProd{x}{y}$, for all $y \in \jAlg$. The \emph{quadratic representation} of $x$ is the linear function $Q_x:\jAlg \to \jAlg$ such that $Q_x = 2L_x^2 - L_{x^2}$. 
We have
\begin{align}
Q_x(\stdInt) = x^2, \qquad \forall x \in \jAlg. \label{eq:sym_quad}
\end{align}
Let $c$ be an idempotent and $\alpha \in \Re$.
We define the following linear subspace of $\jAlg$.
$$
V(c,\alpha) = \{x \in \jAlg \mid \jProd{c}{x} = \alpha x \}.
$$

\begin{theorem}[Peirce Decomposition, see Proposition IV.1.1 and pg.~64 in \cite{FK94}]\label{theo:peirce1}
	Let $c \in \jAlg$ be an idempotent. Then $\jAlg$ is decomposed as the orthogonal direct sum
	$$
	\jAlg = V(c,1) \bigoplus V\left(c,\frac{1}{2}\right) \bigoplus V(c,0).
	$$	
	In addition, $V(c,1)$ and $V(c,0)$ are  Euclidean Jordan algebras under the same Jordan product $\jProd{}{}$. 
	The orthogonal projections onto $V(c,1)$ and $V(c,0)$ are given by $Q_c$ and $Q_{e-c}$, respectively.
	Furthermore, $\jProd{V(c,1/2)}{V(c,1/2)} \subseteq V(c,1)+V(c,0)$.
\end{theorem}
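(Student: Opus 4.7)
The plan is to reduce the decomposition to a polynomial identity satisfied by the Lyapunov operator $L_c$, namely
$$
2L_c^3 - 3L_c^2 + L_c = 0, \qquad \text{i.e.,} \qquad L_c\bigl(L_c - \tfrac{1}{2}I\bigr)(L_c - I) = 0.
$$
Once this identity is established, the direct-sum decomposition, the orthogonality, and the identification of the projections all fall out of standard spectral linear algebra, and the algebra structure follows by re-applying the Jordan axiom inside each Peirce component.

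The first and hardest step is deriving the cubic identity. I would start from the Jordan axiom $x \circ (x^2 \circ y) = x^2 \circ (x \circ y)$ and linearize in $x$ by substituting $x \mapsto c + tz$ and equating coefficients of $t^k$. Using $c \circ c = c$, this produces a family of operator identities relating $L_c$ and $L_{c \circ z}$ valid for all $z \in \jAlg$. Applying these identities to $y = z$ and to $y = c$, and eliminating the mixed terms, one extracts $2L_c^3 = 3L_c^2 - L_c$. (Alternatively, one may replace $x$ by $\alpha c + \beta e$ in the Jordan axiom, expand, and collect powers of $\alpha, \beta$; each coefficient vanishes, and the coefficient of $\alpha^3$ gives precisely the cubic identity.) This calculation is the main obstacle; everything downstream is routine.

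With the identity in hand, axiom (3) gives that $L_c$ is self-adjoint with respect to $\inProd{\cdot}{\cdot}$, so $L_c$ is orthogonally diagonalizable with spectrum contained in $\{0, \tfrac{1}{2}, 1\}$. The eigenspaces are by definition $V(c,0), V(c,\tfrac{1}{2}), V(c,1)$, and they are mutually orthogonal, proving the orthogonal direct-sum decomposition. Since $Q_c = 2L_c^2 - L_{c^2} = 2L_c^2 - L_c$ is a polynomial in $L_c$, we can evaluate it on each eigenspace: it acts as $2\cdot 1 - 1 = 1$ on $V(c,1)$, as $2\cdot \tfrac{1}{4} - \tfrac{1}{2} = 0$ on $V(c,\tfrac{1}{2})$, and as $0$ on $V(c,0)$, so $Q_c$ is the orthogonal projection onto $V(c,1)$. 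Noting that $e - c$ is again an idempotent (since $c \circ (e-c) = c - c^2 = 0$) with $L_{e-c} = I - L_c$, the same argument applied to $e-c$ shows $Q_{e-c}$ is the projection onto $V(c,0)$.

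It remains to verify the algebraic closure properties. For $x, y \in V(c,1)$, apply the linearized Jordan identity established above with the specific substitution $x \to c$ to compute $c \circ (x \circ y)$; using $c \circ x = x$ and $c \circ y = y$ repeatedly collapses the expression to $x \circ y$, so $x \circ y \in V(c,1)$. The identical argument with $c$ replaced by $e - c$ shows that $V(c,0)$ is a subalgebra. Finally, for $x, y \in V(c,\tfrac{1}{2})$, the same linearization yields a relation forcing $L_c(L_c - I)(x \circ y) = 0$; combined with the cubic identity, this rules out the eigenvalue $\tfrac{1}{2}$ for $x \circ y$, giving $x \circ y \in V(c,1) \oplus V(c,0)$ as claimed. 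The Euclidean Jordan algebra axioms for $V(c,1)$ and $V(c,0)$ are then inherited from $\jAlg$, using that $c$ serves as identity in $V(c,1)$ and $e - c$ as identity in $V(c,0)$.
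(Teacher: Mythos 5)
Your argument is correct in substance, and it is essentially the standard proof of this result: the paper itself does not prove the theorem but cites Faraut--Kor\'anyi (Proposition IV.1.1), whose proof is exactly the route you take, namely establishing the cubic identity $2L_c^3-3L_c^2+L_c=0$, using axiom $(3)$ to get self-adjointness of $L_c$, and reading off the orthogonal eigenspace decomposition, the formulas $Q_c=2L_c^2-L_c$ and $Q_{e-c}=2L_{e-c}^2-L_{e-c}$ on the eigenspaces, and the multiplication rules from the linearized Jordan identity.

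Two small points. First, your parenthetical alternative for the cubic identity (substituting $x=\alpha c+\beta e$ and collecting the coefficient of $\alpha^3$) does not work: for such $x$ both $L_x$ and $L_{x^2}$ are polynomials in $L_c$, so the Jordan identity $[L_x,L_{x^2}]=0$ holds trivially and every coefficient vanishes identically, yielding no information. Your main route is fine, though: writing $x=c+tz$ in $[L_x,L_{x^2}]=0$, the coefficient of $t$ gives $2[L_c,L_{c\circ z}]=[L_c,L_z]$, and applying this operator identity to the element $c$ gives $2L_c^3-3L_c^2+L_c=0$ directly. Second, for the rule $\jProd{V(c,1/2)}{V(c,1/2)}\subseteq V(c,1)+V(c,0)$ the degree-one identity is vacuous when $z\in V(c,1/2)$ (since $L_{c\circ z}=\tfrac12 L_z$); what you need is the coefficient of $t^2$, namely $[L_c,L_{z^2}]+2[L_z,L_{c\circ z}]=0$, which for $z\in V(c,1/2)$ gives $[L_c,L_{z^2}]=0$; applying this to $c$ yields $L_c(L_c-I)z^2=0$, i.e. the $V(c,1/2)$-component of $z^2$ vanishes, and then polarization ($2\,\jProd{x}{y}=(x+y)^2-x^2-y^2$) gives the claim. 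With these details filled in, your closure arguments for $V(c,1)$ and $V(c,0)$ (via $[L_c,L_x]=0$ for $x$ in either eigenspace) and the inheritance of the algebra axioms, with $c$ and $e-c$ as the respective units, are all correct.
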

We conclude this review with our assumptions  for Section~\ref{sec:sym}.
\begin{assumption}[Overall assumptions for Section~\ref{sec:sym}]\label{assum:sym}
Throughout Section~\ref{sec:sym}, $\jAlg$ is a Euclidean Jordan algebra, $\stdCone$ is its cone of squares, the inner product is given by \eqref{eq:def_inn}, the norm is given by \eqref{eq:norm} and the distance function is the one induced by 	\eqref{eq:norm}.
\end{assumption}

\subsubsection{Facial structure of symmetric cones}\label{sec:sym_face}
One important property of symmetric cones is that all faces can be seen as smaller symmetric cones. To explain that, we first take an arbitrary idempotent $c$. Then, the algebras $V(c,1)$ and $V(c,0)$ appearing in Theorem~\ref{theo:peirce1} also give rise to symmetric cones. In fact, if we define
$$
\stdFace = \{x^2 \mid x \in V(c,1)\},
$$
we have that $\stdFace$ is a face of $\stdCone$ and $\spanVec \stdFace = V(c,1)$. As $\stdFace$ is the cone of squares of $V(c,1)$, it is also a symmetric cone on its own right. 
Therefore, it must be self-dual in some sense. However, if $\stdFace$ is a proper face of $\stdCone$ then it cannot possibly satisfy $\stdFace^* = \stdFace$. 
The correct way of understanding the self-duality of $\stdFace$ is by restricting ourselves to  $V(c,1)$. It holds that  $$\stdFace^* \cap V(c,1) = \stdFace.$$
Because of that, we say that $\stdFace$ is \emph{self-dual on its span}. 
Since all faces of $\stdCone$ are self-dual on their span, $\stdCone$ is a \emph{perfect cone}, following the definition by Barker \cite{Barker78}.

Under these conditions, we have $c \in \reInt \stdFace$ and $c$ is the identity element in $V(c,1)$. The conjugate face of $\stdFace$ is given as follows
$$
\stdFace ^\Delta = \stdCone \cap \{c\}^\perp = \{x^2 \mid x \in V(c,0)\}.
$$
That is, the faces generated by the algebras $V(c,0)$ and $V(c,1)$ are conjugate to each other. We remark that  $\stdInt - c$ is the identity element in $V(c,0)$ and $\spanVec \stdFace ^{\Delta} = V(c,0)$. 

Reciprocally, given a face $\stdFace$ of $\stdCone$, there exists an idempotent $c$ such that $\stdFace$ is the cone of squares of $V(c,1)$. We summarize these facts in the next proposition, which is a consequence of Theorem~2 in \cite{FB06}, due to Faybusovich.

\begin{proposition}\label{prop:sym_face}
	Let $\stdCone$ be a symmetric cone and $\stdFace$ be a face of $\stdCone$.
	\begin{enumerate}[label=({\it \roman*})]
		\item There is an idempotent $c\in \reInt \stdFace$ such that $V(c,1)$ is a Euclidean Jordan algebra, $\stdFace$ is the cone of squares of $V(c,1)$ and $\spanVec \stdFace = V(c,1)$.
		\item Let $c$ be as in the previous item. The conjugate face of $\stdFace$ is 
		$\stdFace^{\Delta} = \stdCone \cap \{c\}^\perp$ and is the cone of squares of 
		$V(c,0)$. Furthermore $\spanVec \stdFace^{\Delta} = V(c,0)$ and 
		$V(c,0) = V(\stdInt - c, 1)$.
		\item $\stdFace$ is self-dual on its span, i.e., $\stdFace = \stdFace^* \cap \spanVec \stdFace$.
	\end{enumerate}
\end{proposition}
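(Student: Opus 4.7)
The plan is to reduce everything to the well-known correspondence between faces of a symmetric cone and idempotents of the underlying Euclidean Jordan algebra, which is essentially Theorem~2 in \cite{FB06}. I will take item $(i)$ as essentially a statement of that correspondence: given $\stdFace \face \stdCone$, Faybusovich's theorem furnishes an idempotent $c$ with $c \in \reInt \stdFace$ and $\stdFace$ equal to the cone of squares of $V(c,1)$. The Peirce decomposition (Theorem~\ref{theo:peirce1}) then tells us $V(c,1)$ is a Euclidean Jordan algebra with identity $c$, and applying the spectral theorem \emph{inside} $V(c,1)$ shows that every element of $V(c,1)$ is a difference of two elements of $\stdFace$, which gives $\spanVec \stdFace = V(c,1)$.

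For item $(ii)$, my approach splits into three short steps. First, I would show $\stdFace^{\Delta} = \stdCone \cap \{c\}^{\perp}$. The inclusion $\subseteq$ is immediate from $c \in \stdFace$; for the reverse, take $x \in \stdCone \cap \{c\}^{\perp}$, apply Proposition~\ref{prop:aux}$(iii)$ to $x,c \in \stdCone$ with $\inProd{x}{c}=0$ to get $\jProd{x}{c}=0$, i.e., $x \in V(c,0)$, and then conclude $x \perp \stdFace$ via the orthogonality of the Peirce components $V(c,0) \perp V(c,1) \supseteq \stdFace$. Second, I would identify $\stdFace^{\Delta}$ with the cone of squares of $V(c,0)$: one inclusion uses that $y \in V(c,0)$ forces $y^2 \in V(c,0) \cap \stdCone \subseteq \stdFace^{\Delta}$, and the reverse uses the spectral theorem inside the Euclidean Jordan algebra $V(c,0)$ (guaranteed by Theorem~\ref{theo:peirce1}) to extract a square root $y^{1/2} \in V(c,0)$ of any $y \in \stdFace^{\Delta}$. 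Third, the identity $V(c,0) = V(\stdInt-c,1)$ is a one-line verification: $\jProd{c}{x}=0$ iff $\jProd{(\stdInt-c)}{x} = \jProd{\stdInt}{x} = x$.

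For item $(iii)$, I would use the fact that $\stdFace = \stdCone \cap \spanVec \stdFace$ for any face of a closed convex cone, combined with self-duality of $\stdCone$. The nontrivial inclusion is $\stdFace^{*} \cap \spanVec \stdFace \subseteq \stdFace$: if $x \in \stdFace^{*} \cap V(c,1)$, then $\inProd{x}{y} \geq 0$ for every $y \in \stdFace$, so $x$ lies in the dual of $\stdFace$ \emph{computed inside the ambient Jordan algebra $V(c,1)$}. But since $\stdFace$ is the cone of squares of the Euclidean Jordan algebra $V(c,1)$, it is a symmetric cone in $V(c,1)$, hence self-dual there, so $x \in \stdFace$.

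None of the steps is really a serious obstacle once Faybusovich's correspondence is in hand; the one place that requires a little care is the spectral-square-root argument in item~$(ii)$, where one must observe that the square root is taken inside $V(c,0)$ rather than in $\jAlg$, which is exactly what makes $V(c,0)$ being a Euclidean Jordan algebra on its own (Theorem~\ref{theo:peirce1}) crucial. The rest reduces to bookkeeping with the Peirce decomposition and Proposition~\ref{prop:aux}.
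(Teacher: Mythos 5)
Your proposal is correct and follows essentially the same route as the paper, which does not argue this proposition from scratch but derives it from the face--idempotent correspondence of Theorem~2 in \cite{FB06} together with the Peirce-decomposition facts summarized in Section~\ref{sec:sym_face}; your added details (the spectral argument for $\spanVec\stdFace = V(c,1)$, Proposition~\ref{prop:aux}$(iii)$ for $\stdFace^{\Delta}=\stdCone\cap\{c\}^\perp$, the square root taken inside $V(c,0)$, and self-duality of the cone of squares of $V(c,1)$) are exactly the routine steps the paper leaves to the reference. The only points left implicit --- that $\spanVec\stdFace^{\Delta}=V(c,0)$ follows by repeating the item~$(i)$ argument in $V(c,0)$, and that the restriction of the ambient trace inner product keeps $V(c,1)$ and $V(c,0)$ Euclidean Jordan algebras (so the self-duality you invoke is with respect to the right inner product) --- are immediate and do not constitute gaps.
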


Let $x  \in \jAlg$. If there exists $x^{-1}$ such that 
$\jProd{x}{x^{-1}} = \stdInt$ and $L_xL_{x^{-1}}=L_{x^{-1}}L_x $, 
we say that $x^{-1}$ is the inverse of $x$ in $\jAlg$, see Chapter~III of \cite{K99}.  
A sufficient condition for the existence of $x^{-1}$ is 
``$x \in \reInt \stdCone$''.
As in the case of symmetric matrices,  the eigenvalues of 
$x^{-1}$ are the reciprocals of the eigenvalues of $x$.

Now, let $c$ be an  idempotent and consider the algebra $V(c,1)$ together with its cone of squares $\stdFace$. 
If $x \in V(c,1)$, $x$ might have an inverse in $V(c,1)$ even if 
it does not have an inverse in $\jAlg$. In this case, 
$x^{-1}$ would satisfy $\jProd{x}{x^{-1}} = c$. Similarly, ``$x \in \reInt \stdFace$'' is a sufficient 
condition for the existence of an inverse in $V(c,1)$. With that, we have the following proposition.

\begin{proposition}\label{prop:schur}
	Let $\stdCone$ be a symmetric cone, $x \in \jAlg$ and $c$ be an idempotent. Following Theorem \ref{theo:peirce1}, write 
	$$
	x = x_1 + x_2 + x_3,
	$$
	with $x_1 \in V(c,1), x_2 \in V(c,1/2), x_3 \in V(c,0)$. Let $\stdFace$ be the cone of 
	squares of $V(c,1)$.
	\begin{enumerate}[label=$(\roman*)$]
		\item If $x \in \stdCone$, then 
		$x_1 \in \stdFace$ and $x_3 \in \stdFace^{\Delta}$.
		\item If $x \in \reInt \stdCone$, then 
		$x_1 \in \reInt \stdFace$ and $ x_3 \in \reInt\stdFace^{\Delta}$.
		\item (Schur complement
		) Suppose $x_3 \in \reInt \stdFace^{\Delta}$. Then 
		$x \in \reInt \stdCone$ if and only if 
		$$x_1 - Q_{x_2}(x_{3}^{-1}) \in \reInt \stdFace,$$
		where $x_3^{-1}$ denotes the inverse of $x_3$ in $V(c,0)$.
	\end{enumerate}
\end{proposition}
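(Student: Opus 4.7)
The plan is to handle the three items in sequence, using the orthogonality of the Peirce decomposition (Theorem~\ref{theo:peirce1}) together with the self-duality of $\stdFace$ and $\stdFace^\Delta$ on their spans (Proposition~\ref{prop:sym_face}). For $(i)$, observe that $x_1 = Q_c(x)$, since $Q_c$ is the orthogonal projection onto $V(c,1)$. Because $c^2 = c \in \stdCone$, the quadratic representation $Q_c$ preserves $\stdCone$, so $x_1 \in \stdCone \cap V(c,1) = \stdCone \cap \spanVec \stdFace = \stdFace$; the identical argument with the idempotent $\stdInt - c$ (whose associated face is $\stdFace^\Delta$) gives $x_3 \in \stdFace^\Delta$. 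For $(ii)$, let $x \in \reInt \stdCone$. Self-duality of $\stdCone$ gives $\inProd{x}{y} > 0$ for every nonzero $y \in \stdCone$. Orthogonality of the Peirce summands yields $\inProd{x}{y} = \inProd{x_1}{y}$ whenever $y \in V(c,1)$, so in particular $\inProd{x_1}{y} > 0$ for every nonzero $y \in \stdFace$; by self-duality of $\stdFace$ on its span (Proposition~\ref{prop:sym_face}$(iii)$) this forces $x_1 \in \reInt \stdFace$, and the same argument handles $x_3$.

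For item $(iii)$, the Schur complement, the strategy is to mimic the block Gaussian elimination used in the symmetric-matrix case. I would construct an invertible element $p \in \jAlg$ from $c$, $\stdInt - c$, and $\jProd{x_2}{x_3^{-1}} \in V(c,1/2)$ such that an associated structure-group transformation $T_p$ of $\stdCone$ satisfies
\begin{equation*}
T_p(x) \;=\; \bigl(x_1 - Q_{x_2}(x_3^{-1})\bigr) + x_3 \;\in\; V(c,1) \oplus V(c,0).
\end{equation*}
Because $x_3 \in \reInt \stdFace^\Delta$ makes $p$ invertible, $T_p$ would map $\reInt \stdCone$ bijectively onto itself, and hence $x \in \reInt \stdCone$ iff the block-diagonal right-hand side lies in $\reInt \stdCone$. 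Applying item~$(ii)$ to that right-hand side (whose $V(c,0)$ summand $x_3$ is interior by hypothesis) would reduce the statement exactly to $x_1 - Q_{x_2}(x_3^{-1}) \in \reInt \stdFace$, as required.

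The main obstacle is the explicit verification of the identity $T_p(x) = (x_1 - Q_{x_2}(x_3^{-1})) + x_3$: this is a bookkeeping calculation that uses the Peirce product rules $V(c,1) \circ V(c,0) = \{0\}$, $V(c,1) \circ V(c,1/2) \subseteq V(c,1/2)$, and $V(c,1/2) \circ V(c,1/2) \subseteq V(c,1) \oplus V(c,0)$, combined with auxiliary identities such as $Q_{y}(c) = (y^2)_0$ and $Q_{y}(\stdInt - c) = (y^2)_1$ for $y \in V(c,1/2)$ (which one verifies directly on the Peirce components). If constructing the explicit ``shearing'' element $p$ proves unwieldy, an alternative route to $(iii)$ is the variational characterization $x \in \reInt \stdCone \iff \inProd{x}{y^2} > 0$ for every nonzero $y$: expanding $y = y_1 + y_2 + y_3$ by Peirce decomposition and minimizing $\inProd{x}{y^2}$ over $y_3 \in V(c,0)$ (which is possible because $\inProd{x_3}{\cdot}$ is positive definite on the cone of squares of $V(c,0)$ by hypothesis and item~$(ii)$) should produce the pairing $\inProd{x_1 - Q_{x_2}(x_3^{-1})}{y_1^2}$ up to a nonnegative residual, reducing $(iii)$ to item $(ii)$ applied to the Schur complement itself.
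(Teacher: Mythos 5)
Items $(i)$ and $(ii)$ of your proposal are correct. Your argument for $(ii)$ --- pairing $x$ against nonzero $y \in \stdFace$, using orthogonality of the Peirce spaces to reduce $\inProd{x}{y}$ to $\inProd{x_1}{y}$, and invoking self-duality of $\stdFace$ on its span --- is exactly the paper's argument. For $(i)$ you take a slightly different route, via $x_1 = Q_c(x)$ and the invariance $Q_c(\stdCone)\subseteq\stdCone$; this is valid, though the justification is the standard fact that $Q_a(\stdCone)\subseteq\stdCone$ for \emph{every} $a$ (by density of invertible elements and $Q_a(\stdCone)=\stdCone$ for invertible $a$), not specifically that $c$ is an idempotent lying in $\stdCone$. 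The paper instead runs the same duality argument as in $(ii)$ with non-strict inequalities, which avoids appealing to that fact.

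The gap is item $(iii)$. The paper does not prove it; it cites Corollary~5 of Gowda and Sznajder \cite{GS10}. Your proposal for $(iii)$ is a plan rather than a proof: the entire content of the Schur-complement statement is the existence of a structure-group element $T_p$ (the Jordan-algebraic analogue of the Frobenius shearing congruence) satisfying $T_p(x) = \bigl(x_1 - Q_{x_2}(x_3^{-1})\bigr) + x_3$ together with $T_p(\reInt\stdCone)=\reInt\stdCone$, and you explicitly defer both the construction of $p$ and the verification of this identity. The fallback variational argument is likewise only sketched: after expanding $\inProd{x}{y^2}$ one must control the cross terms $\inProd{x_2}{y_1\circ y_2 + y_2\circ y_3}$ and the splitting of $y_2^2$ into its $V(c,1)$ and $V(c,0)$ parts, and the claimed completion of squares over $y_3$ ``up to a nonnegative residual'' is precisely the computation that constitutes the proof. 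Either route can be made rigorous --- the first is essentially how the cited literature proceeds --- but as written neither is carried out; you should either supply the computation in full or, as the paper does, cite \cite{GS10}.
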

\begin{proof}
	\begin{enumerate}[label=$(\roman*)$]
		\item Let $y \in \stdFace$. Since 
		$x \in \stdCone$, we have $\inProd{x}{y} = \inProd{x_1}{y} \geq 0$. This shows that $x_1 \in \stdFace^* \cap V(c,1)$. 
		Since $\stdFace$ is self-dual over its span, we conclude that $x_1 \in \stdFace$.
		A similar argument holds for $x_3$.
		\item Let $y \in  \stdFace \setminus \{0\}$. 
		Since 
		$x \in \reInt \stdCone$, we have $\inProd{x}{y} = \inProd{x_1}{y} > 0$. This shows that $x_1 \in \reInt(\stdFace^* \cap V(c,1))$
		Since $\stdFace$ is self-dual over its span, we conclude that $x_1 \in \reInt \stdFace$.
		A similar argument holds for $x_3$.
		
		\item See Corollary 5 in the article by Gowda and Sznajder \cite{GS10}. 	
		
	\end{enumerate}
\end{proof}	

\subsubsection{Amenability and facial residual functions for symmetric cones}\label{sec:sym_frf}
We will first show that  symmetric cones are amenable.
\begin{proposition}[Symmetric cones are amenable and orthogonal projectionally exposed]\label{prop:sym_am}
	Let $\stdFace \face \stdCone$, where $\stdCone$ is a symmetric cone. There exists an orthogonal 
	projection $Q$ such $Q(\stdCone) = \stdFace$. In particular, 
	$\stdCone$ is amenable and 
	we have
	$$
	\dist(x,\stdFace) = \dist(x,\stdCone), \quad \forall x\in \spanVec \stdFace.
	$$
\end{proposition}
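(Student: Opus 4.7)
The plan is to use Proposition~\ref{prop:sym_face} to associate to $\stdFace$ an idempotent $c\in\reInt\stdFace$ with $\spanVec\stdFace = V(c,1)$, and then to show that the quadratic representation $Q_c$ is the desired orthogonal projection onto $\stdFace$. Theorem~\ref{theo:peirce1} already tells us that $Q_c$ is the orthogonal projection onto $V(c,1)$, so I only need to verify $Q_c(\stdCone)=\stdFace$.

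For the inclusion $Q_c(\stdCone)\subseteq \stdFace$, I would take an arbitrary $x\in\stdCone$ and use its Peirce decomposition $x = x_1+x_2+x_3$ with $x_i$ in $V(c,1)$, $V(c,1/2)$, $V(c,0)$ respectively. Then $Q_c(x)=x_1$, and by item~$(i)$ of Proposition~\ref{prop:schur}, $x_1\in\stdFace$. For the reverse inclusion $\stdFace\subseteq Q_c(\stdCone)$, any $y\in\stdFace$ lies in $V(c,1)$, hence is fixed by $Q_c$, and $\stdFace\subseteq\stdCone$.

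Once $Q_c(\stdCone)=\stdFace$ is established, amenability with constant $\kappa=1$ is immediate from item~$(i)$ of Proposition~\ref{prop:beaut} together with the fact that an orthogonal projection has operator norm at most $1$. For the sharper equality $\dist(x,\stdFace)=\dist(x,\stdCone)$ on $\spanVec\stdFace$, the inequality $\dist(x,\stdCone)\le \dist(x,\stdFace)$ is trivial since $\stdFace\subseteq\stdCone$. For the opposite direction, pick $y\in\stdCone$ with $\norm{x-y}=\dist(x,\stdCone)$; since $x\in V(c,1)$ we have $Q_c(x)=x$ and $Q_c(y)\in\stdFace$, so
\[
\dist(x,\stdFace)\ \le\ \norm{x-Q_c(y)}\ =\ \norm{Q_c(x-y)}\ \le\ \norm{x-y}\ =\ \dist(x,\stdCone),
\]
using that $Q_c$ is an orthogonal projection.

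There is no real obstacle here: the content is essentially already packaged by Proposition~\ref{prop:sym_face} and Theorem~\ref{theo:peirce1}; the only thing to check carefully is that the Peirce component $Q_c(x)=x_1$ of an element $x\in\stdCone$ lies in $\stdFace$, which is exactly Proposition~\ref{prop:schur}$(i)$. The rest is a one-line application of the norm inequality for orthogonal projections.
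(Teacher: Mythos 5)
Your proposal is correct and follows essentially the same route as the paper: Proposition~\ref{prop:sym_face} supplies the idempotent $c$, Theorem~\ref{theo:peirce1} identifies $Q_c$ as the orthogonal projection onto $V(c,1)=\spanVec\stdFace$, Proposition~\ref{prop:schur}$(i)$ gives $Q_c(\stdCone)\subseteq\stdFace$, and the distance equality follows from $\norm{Q_c}\le 1$ together with $\stdFace\subseteq\stdCone$, exactly as in the paper's argument via Proposition~\ref{prop:beaut}$(i)$. Your explicit verification of the reverse inclusion $\stdFace\subseteq Q_c(\stdCone)$ is a minor extra detail the paper leaves implicit, but it is not a different method.
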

\begin{proof}
	Let $\stdCone$ be a symmetric cone and $\stdFace$ be a face of $\stdCone$. 
	First, we observe that  since $\stdFace \subseteq \stdCone$, we 
	have $\dist(x,\stdFace) \geq \dist(x,\stdCone)$, for every $x \in \jAlg$.
	
	Next, following Proposition~\ref{prop:sym_face}, 
	let $c$ be an idempotent such that $V(c,1)$ is the Euclidean Jordan algebra whose cone of squares is $\stdFace$ and such that $\spanVec \stdFace = V(c,1)$. 
	By Theorem~\ref{theo:peirce1}, the orthogonal projection onto $V(c,1)$ is given by $Q_c$. Together with item $(i)$ of Proposition~\ref{prop:schur}, we obtain that 
	$$Q_c(x) \in \stdFace, \quad \forall x \in \stdCone.
	$$
	This shows that $\stdCone$ is orthogonal projectionally exposed. Since 
	$\norm{Q_c} \leq 1$, we have that
	item $(i)$ of Proposition~\ref{prop:beaut} implies $\dist(x,\stdFace) = \dist(x,\stdCone)$ for all $x \in \spanVec \stdFace$. 
	
	
\end{proof}
Next, we will show that  symmetric cones admit {\FRFa}s of 
the form $\kappa \epsilon + \kappa \sqrt{\epsilon \norm{x}}$, where 
$\kappa$ is some positive constant. 
We first need a few auxiliary results.
%
%

\begin{lemma}\label{lemma:half}
	Let $\jAlg$ be a Euclidean Jordan algebra, let $c$ be an idempotent and $w \in V(c,1/2)$. Then, there are 
	$w_0 \in V(c,0)$, $w_1 \in V(c,1)$ such that
	\begin{align*}
	w^2 = w_0 + w_1\\
	\tr({w_0}) = \tr({w_1}) =\frac{\tr{(w^2)}}{2}.
	\end{align*}
\end{lemma}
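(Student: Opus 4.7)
The plan is to invoke Peirce decomposition to split $w^2$, and then extract the trace identities from the characterization of $V(c,1/2)$ together with the compatibility of trace, inner product, and Jordan product.

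First, since $w \in V(c,1/2)$, the last assertion of Theorem~\ref{theo:peirce1} (i.e., $\jProd{V(c,1/2)}{V(c,1/2)} \subseteq V(c,1)+V(c,0)$) gives directly the existence of $w_0 \in V(c,0)$ and $w_1 \in V(c,1)$ with $w^2 = w_0 + w_1$. The existence part is therefore immediate and requires no work.

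The main point is the trace identity. I would pair $w^2$ with $c$ (and then with $\stdInt - c$) and exploit three facts at once: associativity of the Jordan product under the trace inner product ($\inProd{\jProd{x}{y}}{z} = \inProd{x}{\jProd{y}{z}}$), the eigenvalue relation $\jProd{c}{w} = \tfrac{1}{2}w$ coming from $w\in V(c,1/2)$, and the orthogonality of the Peirce subspaces (so that $\inProd{w_0}{c} = 0$ and $\inProd{w_1}{\stdInt - c} = 0$, since $\jProd{c}{w_0}=0$ and $\jProd{c}{w_1}=w_1$). Concretely,
\begin{equation*}
\tr(w_1) \;=\; \inProd{w_0+w_1}{c} \;=\; \inProd{w^2}{c} \;=\; \inProd{\jProd{w}{w}}{c} \;=\; \inProd{w}{\jProd{w}{c}} \;=\; \tfrac{1}{2}\inProd{w}{w} \;=\; \tfrac{1}{2}\tr(w^2),
\end{equation*}
and an entirely analogous computation pairing with $\stdInt - c$ (using $\jProd{w}{\stdInt - c} = w - \tfrac{1}{2}w = \tfrac{1}{2}w$) yields $\tr(w_0) = \tfrac{1}{2}\tr(w^2)$. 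Alternatively, once $\tr(w_1)$ is known, one obtains $\tr(w_0) = \tr(w^2) - \tr(w_1) = \tfrac{1}{2}\tr(w^2)$ by linearity.

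There is no real obstacle here; the entire argument is a one-line calculation once the three ingredients (Peirce orthogonality, trace-inner-product identity, and the defining property of $V(c,1/2)$) are recognized. The only mild care needed is to remember that $\tr(\cdot)$ as used in this section is $\inProd{\cdot}{\stdInt}$ and that $\stdInt = c + (\stdInt - c)$ provides the right decomposition of the identity to read off the traces of the Peirce components.
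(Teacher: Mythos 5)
Your proposal is correct and follows essentially the same route as the paper's proof: Peirce decomposition for existence, then pairing $w^2$ with $c$ and using the trace-inner-product identity together with $\jProd{c}{w}=\tfrac{1}{2}w$ and the orthogonality $\inProd{w_0}{c}=0$, $\inProd{w_1}{\stdInt-c}=0$ to read off $\tr(w_1)=\inProd{w^2}{c}=\tfrac{1}{2}\tr(w^2)$. The paper likewise concludes $\tr(w_0)$ by the complementary pairing, so there is nothing to add.
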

\begin{proof}
	From Theorem~\ref{theo:peirce1}, we can 
	write $w^2 = w_0 + w_1$, with $w_0 \in V(c,0)$, $w_1 \in V(c,1)$.
	On one hand, taking the inner product with $c$, we obtain
	\begin{align*}
	\inProd{w^2}{c} = \inProd{w}{\jProd{w}{c}} = \frac{\inProd{w}{w}}{2} = \frac{\tr(w^2)}{2},
	\end{align*}
	where the first equality follows from axiom $(3)$ in Section~\ref{sec:sym} and the second equality follows from the assumption that $w \in V(c,1/2)$.
	On the other hand, we have $$\inProd{w_0+w_1}{c} = \inProd{w_1}{c},$$ since $w_0 \in V(c,0)$.
	To conclude, we recall that $\stdInt-c$ belongs 
	to $V(c,0)$, so that $$\tr(w_1) = \inProd{w_1}{c + (\stdInt-c)}=\inProd{w_1}{c}. $$
	
\end{proof}
At last, we recall the following variational characterization of $\lambda_{\min}$, which can be found, for instance, in Equation~(9) in \cite{Sturm2000}.
$$
\lambda_{\min}(x) = \min \{ \inProd{x}{y} \mid y \in \stdCone, \inProd{y}{\stdInt} = 1 \}.	
$$
Then, since $\tr(y) = \inProd{y}{\stdInt}$, we obtain 
\begin{equation}\label{eq:in_bound}
\inProd{x}{y} \geq \lambda _{\min}(x) \tr(y), \quad\forall x \in \jAlg, \forall y \in \stdCone.
\end{equation}

\begin{theorem}[Facial residual functions for symmetric cones]\label{theo:sym_am}%
	Let $\stdCone$ be a symmetric cone and let $\stdFace \face \stdCone$ be an arbitrary face.
	Let $z \in \stdFace^*$ and 
	$\hat \stdFace = \stdFace \cap \{z\}^{\perp}$.
	Then, there is a positive constant $\kappa$ (depending  on $\stdCone, \stdFace, z$) such that 
	whenever $x$ satisfies  the inequalities
	$$
	\dist(x,\stdCone) \leq \epsilon, \quad \inProd{x}{z} \leq \epsilon, \quad \dist(x, \spanVec \stdFace ) \leq \epsilon
	$$
	we have
	$$
	\dist(x,\hat \stdFace)  \leq \kappa \epsilon +  \kappa \sqrt{\epsilon \norm{x}}.
	$$
	That is, we can take $\psi _{ \stdFace,z}(\epsilon,\norm{x}) = \kappa \epsilon +  \kappa \sqrt{\epsilon \norm{x}}$ as a {\FRF} for $\stdFace$ and $z$.
\end{theorem}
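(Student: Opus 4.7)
The plan is to adapt Sturm's SDP argument to general Euclidean Jordan algebras by combining the facial structure from Proposition~\ref{prop:sym_face}, a Peirce decomposition with respect to the support idempotent of $z$, and a Jordan-algebraic analog of the Schur complement estimate. First I would reduce to the case $\stdFace = \stdCone$ and $z \in \stdCone$. By Proposition~\ref{prop:sym_face} there is an idempotent $c$ with $\spanVec\stdFace = V(c,1)$ and $\stdFace$ the cone of squares of $V(c,1)$; the orthogonal projection onto $V(c,1)$ is $\Qr{c}$ by Theorem~\ref{theo:peirce1}. Combining $\dist(x,\spanVec\stdFace)\leq\epsilon$, $\dist(x,\stdCone)\leq\epsilon$, and the equality $\dist(u,\stdFace)=\dist(u,\stdCone)$ for $u\in V(c,1)$ from Proposition~\ref{prop:sym_am}, one produces $\hat x\in\stdFace$ with $\norm{x-\hat x}\leq 3\epsilon$. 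Since $\stdCone$ is self-dual and nice (symmetric cones are amenable, hence nice by Proposition~\ref{prop:am_nice}), $\stdFace^{*}=\stdCone+\stdFace^\perp$; because $\Qr{c}$ kills $\stdFace^\perp$ and sends $\stdCone$ into $\stdFace$ (Proposition~\ref{prop:schur}(i)), I may replace $z$ by $\Qr{c}(z)\in\stdFace$ without altering $\hat\stdFace$ and with $\inProd{\hat x}{z}\leq C\epsilon$. From here on I take the ambient algebra to be $V(c,1)$ and assume $\stdFace=\stdCone$ and $z\in\stdCone$.

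Spectrally decompose $z=\sum_i\mu_i e_i$ with $\mu_i>0$, and let $c''=\sum_i e_i$ be the support idempotent of $z$, so that $z$ lies in the relative interior of the cone of squares of $V(c'',1)$. For $y\in\stdCone$, the nonnegativity of each $\inProd{y}{e_i}$ (Proposition~\ref{prop:aux}(iii)) gives $\inProd{y}{z}=0\iff\inProd{y}{c''}=0$, so $\hat\stdFace=\stdCone\cap\{c''\}^\perp=V(c'',0)\cap\stdCone$. Write the Peirce decomposition $\hat x=\hat x_1+\hat x_2+\hat x_3$ with $\hat x_i\in V(c'',i)$ for $i\in\{1,\tfrac12,0\}$. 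By Proposition~\ref{prop:schur}(i), $\hat x_3\in\hat\stdFace$, so
\[
\dist(x,\hat\stdFace)\leq\norm{x-\hat x_3}\leq 3\epsilon+\norm{\hat x_1}+\norm{\hat x_2}.
\]
The bound on $\norm{\hat x_1}$ is immediate: inside $V(c'',1)$, $z$ is in the relative interior of the cone of squares and $\hat x_1$ lies in that cone, so Lemma~\ref{lemma:reint_dual} (applied to that algebra) combined with $\inProd{\hat x_1}{z}=\inProd{\hat x}{z}$ yields $\norm{\hat x_1}\leq\kappa_1\inProd{\hat x}{z}=O(\epsilon)$.

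The main obstacle is the bound on $\norm{\hat x_2}$, the source of the exponent $1/2$. I would establish the Jordan-algebraic Schur complement inequality
\[
\norm{w_2}^2\leq 8\,\tr(w_1)\,\tr(w_3)\qquad\text{for every }w\in\stdCone,
\]
where $w=w_1+w_2+w_3$ is the Peirce decomposition with respect to $c''$. To prove it, write $w=y^2$ with $y=w^{1/2}\in\stdCone$ and Peirce-decompose $y=y_1+y_2+y_3$; the Peirce multiplication rules (notably $V(c'',1)\circ V(c'',0)=\{0\}$) and Lemma~\ref{lemma:half} give $w_2=2(y_1+y_3)\circ y_2$, $\tr(w_1)=\norm{y_1}^2+\tfrac12\norm{y_2}^2$, and $\tr(w_3)=\norm{y_3}^2+\tfrac12\norm{y_2}^2$. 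Proposition~\ref{prop:aux}(iii) applied to $\jProd{y_1}{y_3}=0$ gives $y_1+y_3\in\stdCone$ with $\norm{y_1+y_3}^2=\norm{y_1}^2+\norm{y_3}^2$; the standard eigenvalue description of the Lyapunov operator on $\jAlg$ yields $\norm{L_a}_{\mathrm{op}}\leq\lambda_{\max}(a)\leq\norm{a}$ for every $a\in\stdCone$, hence $\norm{w_2}^2\leq 4\norm{y_1+y_3}^2\norm{y_2}^2\leq 8\tr(w_1)\tr(w_3)$. Applying this to $w=\hat x$ and using $\tr(\hat x_1)\leq\sqrt{r}\norm{\hat x_1}=O(\epsilon)$ and $\tr(\hat x_3)\leq\tr(\hat x)\leq\sqrt{r}\norm{\hat x}=O(\norm{x}+\epsilon)$, with $r=\matRank\stdCone$, one gets $\norm{\hat x_2}=O(\sqrt{\epsilon\norm{x}})+O(\epsilon)$. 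Combining the bounds on $\norm{x-\hat x}$, $\norm{\hat x_1}$ and $\norm{\hat x_2}$ gives the claimed $\dist(x,\hat\stdFace)\leq\kappa\epsilon+\kappa\sqrt{\epsilon\norm{x}}$ for a suitable $\kappa$ depending only on $\stdCone$, $\stdFace$ and $z$.
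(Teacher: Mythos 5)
Your proposal is correct, and it reaches the bound by a genuinely different route at the two places where the paper has to work hardest. For identifying $\hat\stdFace$ inside $V(c,1)$, the paper appeals to niceness via Proposition~\ref{prop:cj_ri} to conclude $z_1\in\reInt\hat\stdFace^{\Delta}$; you instead read this off directly from the spectral decomposition of $z_1$ and its support idempotent $c''$, using only self-duality of $\stdCone$ to see that $\stdFace\cap\{z_1\}^\perp=\stdFace\cap\{c''\}^\perp$. More substantially, for the off-diagonal block (the paper's $x_{12}$, your $\hat x_2$) the paper perturbs $x_1$ by $(\epsilon+\alpha)c$ into $\reInt\stdFace$, invokes the Jordan-algebraic Schur complement (Proposition~\ref{prop:schur}$(iii)$), bounds $\lambda_{\min}$ of the inverse of the $V(\hat c,0)$-block, and lets $\alpha\to 0$; you instead first pass to an honest element $\hat x\in\stdFace$ at cost $O(\epsilon)$ and prove the universal inequality $\norm{w_2}^2\leq 8\,\tr(w_1)\tr(w_3)$ for all $w\in\stdCone$ — the Jordan analogue of $|M_{ij}|^2\leq M_{ii}M_{jj}$ for PSD matrices — via the square root $y=w^{1/2}$, Lemma~\ref{lemma:half}, and the estimate $\norm{L_a}\leq\lambda_{\max}(a)$ for $a\in\stdCone$. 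Your argument is more self-contained as an inequality (no inverses, no limit in $\alpha$) and yields cleaner constants, but it does rest on one standard fact the paper never states: that the eigenvalues of the Lyapunov operator $L_a$ are the half-sums $(\lambda_i+\lambda_j)/2$ of eigenvalues of $a$ (a consequence of the second Peirce decomposition, Theorem~IV.2.1 in \cite{FK94}), whereas the paper only records the first Peirce decomposition; similarly you use the multiplication rule $\jProd{V(c'',1)}{V(c'',0)}=\{0\}$, which is in Proposition~IV.1.1 of \cite{FK94} but not in the paper's Theorem~\ref{theo:peirce1}. If you cite those two facts explicitly, the proof is complete.
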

\begin{proof}
	Let $\stdFace$ be a face of $\stdCone$, $z \in \stdFace^*$ and let $\hat \stdFace = \stdFace \cap \{z\}^\perp$.
	By item $(i)$ of Proposition \ref{prop:sym_face},
	there is an idempotent $c \in \reInt \stdFace$ such that $V(c,1)$
	is a Jordan algebra satisfying
	$$
	\stdFace = \{u^2 \mid u \in V(c,1)\}.
	$$
	Furthermore, we have  $V(c,1) = \spanVec \stdFace$.
	Now, suppose that we have $x \in \jAlg$ such that 
	$$
	\dist(x,\stdCone) \leq \epsilon, \quad \inProd{x}{z} \leq \epsilon, \quad \dist(x, \spanVec \stdFace ) \leq \epsilon.
	$$	
	By Theorem \ref{theo:peirce1}, we can decompose $x$ and $z$ as 
	\begin{align*}
	x &= x_1 + x_2 + x_3\\
	z &= z_1 + z_2 + z_3,
	\end{align*}
	where $x_1,z_1 \in V(c,1), x_2,z_2 \in V(c,1/2), x_3,z_3 \in V(c,0)$.
	We recall that $V(c,1)$, $V(c,1/2)$ and $V(c,0)$ are orthogonal subspaces. In particular, this implies that  $x_1$ is the orthogonal projection of $x$ onto 
	$V(c,1)= \spanVec \stdFace$. 
	Therefore, $\dist(x, \spanVec \stdFace ) \leq \epsilon$
	implies  $\norm{x - x_1} \leq \epsilon$. As $x_2$ and $x_3$ are orthogonal, we obtain
	\begin{align}
	\norm{x_2} &\leq \epsilon \label{eq:bd2}\\
	\norm{x_3} &\leq \epsilon \label{eq:bd3}. 
	\end{align}
	Now we turn our attentions to $\hat \stdFace$.
	First, since $z \in \stdFace ^*$, we have 
	\begin{equation}\label{eq:sym_am_z1}
	z_1 \in \stdFace,\qquad \hat \stdFace = \stdFace \cap \{z_1\}^{\perp}. \footnote{To see that, first 
		recall that $\stdFace$ is self-dual on its span, i.e., $\stdFace = \{v \in V(c,1)\mid \inProd{u}{v}\geq 0, \forall u \in \stdFace \}$.
		Now, let $v \in \stdFace$ be arbitrary. Since $z \in \stdFace^*$, we have $\inProd{v}{z} = \inProd{v}{z_1} \geq 0 $, due to the orthogonality among $V(c,0),V(c,1/2)$ and $V(c,1)$.
		This shows that $z_1 \in \stdFace$. Similarly, we can show that $\stdFace \cap\{z\}^\perp = \stdFace \cap\{z_1\}^\perp.  $}
	\end{equation}	
	As $V(c,1)$ is a \emph{bona fide} Jordan algebra and $\hat \stdFace$ is a face of $\stdFace$, again by Proposition~\ref{prop:sym_face} there is some idempotent $\hat c$ such 
	that $\hat V (\hat c, 1) $ is the Jordan algebra contained in 
	$V(c,1)$  that generates $\hat \stdFace$, i.e., 
	$$
	\hat \stdFace = \{u^2 \mid u \in \hat V (\hat c, 1)\},
	$$
	where $$\hat V(\hat c, \alpha) = \{u \in V(c,1) \mid \jProd{\hat c}{u} = \alpha u \} = V(\hat c, \alpha) \cap V(c,1).$$
	We remark that 
	$\hat V(\hat c, \alpha)$ might be smaller than 
	$V(\hat c, \alpha)$ and we use the symbol $\hat V$ to emphasize that $\hat V(\hat c, \alpha)$ is a subalgebra of $V(c,1)$.
	
	Given the idempotent $\hat c$, we  apply Theorem~\ref{theo:peirce1}
	substituting $\jAlg$ by $V(c,1)$ and $c$ by $\hat c$. It follows that
	$$V(c,1) = \hat V(\hat c, 1) \oplus \hat V(\hat c,1/2) \oplus \hat V(\hat c, 0).$$
	Then, we further decompose $x_1$ as
	$$
	x_1 = x_{11} + x_{12} + x_{13},
	$$
	with $x_{11} \in \hat V(\hat c, 1), x_{12}\in \hat V(\hat c, 1/2), x_{13} \in \hat V(\hat c, 0)$. 
	Our goal is to bound to $x_{12}$ and $x_{13}$.

	We first bound $x_{13}$ by invoking Lemma~\ref{lemma:reint_dual} appropriately. 
	To do so, first recall \eqref{eq:sym_am_z1}, so that $z_1 \in \stdFace$ and $\hat \stdFace = \stdFace \cap \{z_1\}^\perp$. 
	We restrict ourselves to $V(c,1)$ and let $\hat \stdFace ^{\Delta}$ denote the conjugate face of $\hat \stdFace$ with respect 
	to 	$\stdFace$. That is, $$\hat \stdFace ^{\Delta} = \stdFace \cap \hat \stdFace ^\perp = V(c,1)\cap\stdFace^*\cap \hat \stdFace ^\perp.  $$	
	Recalling that $\inProd{x}{z} \leq \epsilon$, we have
	\begin{align}
	\inProd{z_1}{x_1} & \leq  \epsilon - \inProd{z_2}{x_2} - \inProd{z_3}{x_3}  && \notag \\
	& \leq \epsilon + \epsilon \norm{z_2} + \epsilon \norm{z_3}&& \text{(From \eqref{eq:bd2} and \eqref{eq:bd3})} \notag\\
	& \leq \epsilon(1+\norm{z_2} + \norm{z_3}). \label{eq:bd_z1x1} &&
	\end{align}
	Recall that symmetric cones are nice because they are amenable (Proposition~\ref{prop:am_nice}), see also Proposition~4 and Section~4.1 in the work by Chua and Tun\c{c}el~\cite{CT08} or  Theorem~4.1 in the work by P\'olik and Terlaky~\cite{PT07}.
	Since $\stdFace$ is a symmetric cone (see Section~\ref{sec:sym_face}), $\stdFace$ must be nice as well. 
	Therefore, we can
	apply  Proposition~\ref{prop:cj_ri} to $\stdFace$ and $z_1$. 
	This shows that  $z_1 \in \reInt \hat \stdFace ^{\Delta}$ and, in particular, $z_1 \in \hat V(\hat c, 0)$.\footnote{Rigorously, the argument so far only shows that $z_1 \in \reInt(\stdFace^*\cap \hat \stdFace^\perp)$. However, since $z_1 \in V(c,1)$, we can put ``$\reInt$'' outside and conclude that $V(c,1)\cap \reInt(\stdFace^*\cap \hat \stdFace^\perp) = \reInt(\stdFace^*\cap \hat \stdFace^\perp\cap V(c,1))$. Therefore, as remarked, $z_1 \in \reInt \hat \stdFace^{\Delta}$. Furthermore, since $z_1 \in \stdCone$ and $\hat c \in \hat \stdFace$, we have $\inProd{\hat c}{z} = 0$. By item $(iii)$ of Proposition~\ref{prop:aux}, we have $\jProd{\hat c}{z} = 0$ and $z_1 \in \hat V(\hat c, 0)$ as claimed. }
	As $\hat \stdFace ^{\Delta}$ is a symmetric cone whose Jordan algebra is $\hat V(\hat c,0) $, we 
	have $\hat \stdFace ^{\Delta} = \hat \stdFace ^{\Delta*}\cap \hat V(\hat c,0)$, by items $(ii)$ and $(iii)$ of Proposition \ref{prop:sym_face}. 
	This shows that  $z_1 \in \reInt \hat \stdFace ^{\Delta*}$. 
	
	Now, since $\dist(x,\stdCone) \leq \epsilon$, we 
	have\footnote{Let $u \in \stdCone$ be such that $\dist(x,\stdCone) = \norm{x-u}$. Decompose $u$ following the same decomposition of $x$. We have $u = u_{11} + u_{12} + u_{13} + u_2 + u_3$. By item $(i)$ of Proposition~\ref{prop:schur}, we have that $u_{13} \in \hat \stdFace ^{\Delta}$. Therefore $\dist(x_{13}, \hat \stdFace ^{\Delta}) \leq \norm{x_{13} - u_{13}} \leq \norm{x-u} \leq \epsilon$.  Similarly, we have $\dist(x_{1}, \stdFace ) \leq \norm{x_{1} - u_{1}} \leq \epsilon$. } 
	\begin{align}
	\dist(x_{1}, \stdFace) \leq \epsilon, \quad \dist(x_{13}, \hat \stdFace ^{\Delta}) \leq \epsilon. \label{eq:aux}
	\end{align}
	Therefore, there is $u \in \hat V(\hat c, 0)$ such that $x_{13} + u \in \hat \stdFace^{\Delta}$ and $\norm{u} \leq \epsilon$. Since 
	$z_1 \in \hat V(\hat c,0)$, we have the following inequalities
	\begin{align*}
	\inProd{z_{1}}{x_{13} + u} & = \inProd{z_{1}}{x_{11} + x_{12} + x_{13} + u} &&\text{($z_1$ is orthogonal to $x_{11},x_{12}$)}\\
	& = \inProd{z_{1}}{x_1 + u} && \\ 
	& \leq \epsilon(1+\norm{z_2} + \norm{z_3} +\norm{z_1})&& \text{(From \eqref{eq:bd_z1x1})}.
	\end{align*}
	We apply Lemma~\ref{lemma:reint_dual} to 
	$\hat \stdFace ^{\Delta}$ and $z_1$, which tells us that there is $\kappa _1 > 0$ such that $\norm{w} \leq \kappa _1 \inProd{w}{z_1} $ whenever $w \in \hat \stdFace ^{\Delta}$.
	It follows that $$\norm{x_{13}+u} \leq  \epsilon\kappa _1(1+\norm{z_2} + \norm{z_3} +\norm{z_1}). $$ 
	As $\norm{u} \leq \epsilon$, we conclude 
	that 
	\begin{equation}\label{eq:bd13}
	\norm{x_{13}} \leq \hat \kappa _1\epsilon,
	\end{equation}
	where $\hat \kappa _1 = (\kappa _1((1+\norm{z_2} + \norm{z_3} +\norm{z_1}) +1)$.
	
	The next task is to bound $x_{12}$.	First, we 
	apply Lemma \ref{lemma:half} to $x_{12}$, with 
	$V(c,1)$ in place of $\jAlg$, thus obtaining 
	$w_0 \in \hat V(\hat c,0)$ and $w_1 \in \hat V(\hat c,1)$ such that
	\begin{align}
	x_{12}^2 & = w_0 + w_1  \label{eq:x12_1} \\
	\tr(w_0) & = \frac{\tr(x_{12}^2)}{2}.\label{eq:x12_2}
	\end{align}
	From \eqref{eq:aux}, and since  $c$ is the identity element in $V(c,1)$,  we have $$x_1 + \epsilon c \in \stdFace.\footnote{
		If $x_1 \in \stdFace$, then we have $x_1 + \epsilon c \in \stdFace$. If not, then $\lambda _{\min}(x_1) < 0$. Here, we are considering the minimum eigenvalue of $x_1$ with respect the algebra $V(c,1)$.
		In this case, from Proposition~\ref{prop:dist_sym_2}, we have that $\epsilon \geq \dist(x_{1}, \stdFace)\geq -\lambda _{\min}(x_1)$. Then, since $c$ is the identity in $V(c,1)$, adding $\epsilon c$ to $x_1$ has the effect of adding $\epsilon$ to $\lambda _{\min}(x_1)$.  }$$ 
	In addition, since $c \in \reInt \stdFace$, the following holds for every $\alpha > 0$, 
	$$x_1 + (\alpha + \epsilon) c \in \reInt \stdFace.$$
	We write $c = \hat c + (c - \hat c)$ and recall that $\hat c \in V(\hat c,1)$ and $(c-\hat c) \in \hat V(\hat c,0)$. Then, we obtain from item $(ii)$ of Proposition \ref{prop:schur} that 
	\begin{align}
	x_{11}+ (\epsilon +\alpha)\hat c & \in \reInt \hat \stdFace \label{eq:x_11}\\
	x_{13} + (\epsilon +\alpha)(c-\hat c) & \in \reInt \hat \stdFace ^{\Delta}. \notag
	\end{align}
	Now, we apply item $(iii)$ of Proposition \ref{prop:schur}, which tells us that 
	the following Schur complement must be a relative interior point of $\hat \stdFace$.
	\begin{equation}
	(x_{11}+ (\epsilon +\alpha)\hat c) - Q_{x_{12}}((x_{13} + (\epsilon +\alpha)(c-\hat c) )^{-1}) \in \reInt \hat \stdFace, \label{eq:schur_goal} 
	\end{equation}
	where $(x_{13} + (\epsilon +\alpha)(c-\hat c) )^{-1}$ is the inverse of $x_{13} + (\epsilon +\alpha)(c-\hat c)$ in $\hat V(\hat c, 0)$.
	
	The next subgoal is to bound from below the minimum eigenvalue\footnote{The subtlety here is that $x_{13} + (\epsilon +\alpha)(c-\hat c) $ and its inverse, seen as elements of $\hat V(\hat c,0)$, have no zero eigenvalues, since they belong to $\reInt \hat \stdFace^{\Delta}$. If we see them as elements of $\jAlg$, zero eigenvalues might appear, but the corresponding idempotents certainly do not belong to $\hat V(\hat c,0)$. } of $(x_{13} + (\epsilon +\alpha)(c-\hat c) )^{-1}$ in the algebra $\hat V(\hat c, 0) $.	 Since $x_{13} + (\epsilon +\alpha)(c-\hat c) \in \hat \stdFace ^{\Delta}$ and $c-\hat c$ is the unit element in $\hat V(\hat c, 0)$, we have 
	$$
	\lambda _{\max}(x_{13} + (\epsilon +\alpha)(c-\hat c)) = \lambda _{\max}(x_{13}) + \epsilon + \alpha.
	$$
	In addition, from \eqref{eq:bd13} and \eqref{eq:norm}, we have that $\lambda _{\max}(x_{13}) \leq \hat \kappa _1 \epsilon$. Thus, we obtain 
	\begin{equation}\label{eq:bd13_inv}
	\lambda _{\min}(x_{13} + (\epsilon +\alpha)(c-\hat c) )^{-1} = \frac{1}{\lambda _{\max}(x_{13} + (\epsilon +\alpha)(c-\hat c) )} \geq \frac{1}{(\hat \kappa _1 +1)\epsilon + \alpha}.
	\end{equation}
	We now return to \eqref{eq:schur_goal}. As the Schur complement is a relative interior point of $\hat \stdFace^{\Delta}$, its inner product with $c$ must be nonnegative. Recalling that $Q_{x_{12}}$ is self-adjoint, it follows that 
	\begin{align*}
	\inProd{x_{11}+ (\epsilon +\alpha) \hat c}{c}  & \geq \inProd {Q_{x_{12}}((x_{13} + (\epsilon +\alpha)(c - \hat c) )^{-1} )}{c} && \\
	& = \inProd {(x_{13} + (\epsilon +\alpha)(c - \hat c) )^{-1} }{x_{12}^2} && \text{(From \eqref{eq:sym_quad})  }\\
	& = \inProd {(x_{13} + (\epsilon +\alpha)(c - \hat c) )^{-1} }{w_0} && \text{(From \eqref{eq:x12_1})  }\\
	& \geq \lambda _{\min}((x_{13} + (\epsilon +\alpha)(c - \hat c) )^{-1})\tr{(w_{0})} && \text{(From \eqref{eq:in_bound})\footnotemark}\\
	& = \lambda _{\min}((x_{13} + (\epsilon +\alpha)(c - \hat c) )^{-1})\frac{\tr{(x_{12}^2)}}{2} && \text{(From \eqref{eq:x12_2})}\\
	& \geq \frac{1}{(\hat \kappa _1 +1)\epsilon + \alpha} \frac{\norm{x_{12}}^2}{2}. && \text{(From \eqref{eq:bd13_inv} and \eqref{eq:norm})}
	\end{align*}\footnotetext{\eqref{eq:in_bound} is invoked with $\hat V(\hat c, 0)$ in place of $\jAlg$, so 
		that $\lambda _{\min}((x_{13} + (\epsilon +\alpha)(c - \hat c) )^{-1})$ refers to the minimum eigenvalue in the algebra $\hat V(\hat c, 0)$ and that is also why we can use \eqref{eq:bd13_inv} at the end.}%
Using the Cauchy-Schwarz inequality, we 
	get
	$$
	\norm{x_{12}}^2 \leq  2((\hat \kappa _1 +1)\epsilon + \alpha)\norm{c}\norm{x_{11} + (\epsilon +\alpha) \hat c}.
	$$
	Since $\alpha$ is an arbitrary positive number, we get
	$$
	\norm{x_{12}}^2 \leq 2((\hat \kappa _1 +1)\epsilon)\norm{c}\norm{x_{11} + \epsilon \hat c}.
	$$
	Therefore,
	\begin{equation*}
	\norm{x_{12}} \leq  \hat \kappa_2\sqrt{\norm{\epsilon x_{11} + \epsilon^2 \hat c}},
	\end{equation*}
	where $\hat \kappa _2 = \sqrt{2(\hat \kappa _1 +1)\norm{c}}$. Finally, using the triangle inequality, we 
	get
	\begin{equation}\label{eq:bd12}
	\norm{x_{12}} \leq  \epsilon \hat \kappa _2 \sqrt{\norm{\hat c}}   +  \hat\kappa _2\sqrt{\epsilon \norm{x_{11}}}.
	\end{equation}
	We are now ready to bound 
	$\dist(x, \hat \stdFace)$. From \eqref{eq:x_11}, we have that 
	$x_{11}- \epsilon \hat c \in \hat \stdFace$. It follows that
	\begin{align*}
	\dist(x,\hat \stdFace) & \leq \norm{x - x_{11}- \epsilon \hat c} \\
	& \leq \norm{x_{12} + x_{13} + x_{2} + x_{3}} + \epsilon \norm{\hat c}\\
	& \leq \epsilon \hat \kappa _2 \sqrt{\norm{\hat c}}   +  \hat\kappa _2\sqrt{\epsilon \norm{x_{11}}} + \hat \kappa _1\epsilon + 2\epsilon + \epsilon \norm{\hat c}&& \text{(From \eqref{eq:bd2}, \eqref{eq:bd3}, \eqref{eq:bd13}, \eqref{eq:bd12})}\\
	& \leq \kappa \epsilon +   \kappa \sqrt{\epsilon \norm{x}},
	\end{align*}
	where $\kappa = \max(\hat \kappa _2 \sqrt{\norm{\hat c}} + \hat \kappa _1 + 2 + \norm{\hat c},\hat \kappa _2)$.	
\end{proof}

\subsubsection{H\"olderian error bounds for symmetric cones}

Following Theorem \ref{theo:sym_am}, our first step is to bound by above the composition of facial residual functions of $\stdCone$.
\begin{lemma}\label{lem:sym_res}
Suppose that $\psi _i(\epsilon,\norm{x}) = \kappa _i \epsilon + \kappa _i\sqrt{\epsilon \norm{x}}$ for $i = 1,\ldots, \ell-1$, where the $\kappa_i$ are positive constants and $\ell \geq 2$. Then, there is a positive constant $\kappa$ such that
$$
\psi _{{\ell-1}}\comp \ldots \comp \psi_{{1}}(\epsilon,\norm{x}) \leq \kappa\sum _{j = 0}^{\ell-1}  \epsilon^{({2^{-j})} }\norm{x}^{1-{2^{-j}} },
$$	
for every $\epsilon \geq 0$ and every $x$.
\end{lemma}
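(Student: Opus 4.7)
The plan is to prove the bound by induction on $\ell \geq 2$, exploiting the key algebraic identity
$$
\sqrt{\epsilon^{2^{-j}}\norm{x}^{1-2^{-j}} \cdot \norm{x}} \;=\; \epsilon^{2^{-(j+1)}}\norm{x}^{1-2^{-(j+1)}},
$$
which explains why each extra composition with a function of the form $\kappa_i\epsilon + \kappa_i\sqrt{\epsilon\norm{x}}$ introduces exactly one new exponent $2^{-(j+1)}$ in the sum.

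For the base case $\ell=2$, I would just expand $\psi_2 \comp \psi_1(\epsilon,\norm{x}) = \psi_2(\epsilon + \psi_1(\epsilon,\norm{x}),\norm{x})$ directly. The linear part contributes $\kappa_2((1+\kappa_1)\epsilon + \kappa_1\sqrt{\epsilon\norm{x}})$, which already has the right shape. For the square-root part, $\kappa_2\sqrt{(1+\kappa_1)\epsilon\norm{x} + \kappa_1\sqrt{\epsilon\norm{x}}\norm{x}}$, I would use the elementary inequality $\sqrt{a+b}\leq \sqrt{a}+\sqrt{b}$ (valid for $a,b\geq 0$) to split it into $\kappa_2\sqrt{1+\kappa_1}\sqrt{\epsilon\norm{x}}+\kappa_2\sqrt{\kappa_1}\,\epsilon^{1/4}\norm{x}^{3/4}$. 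Collecting terms produces a bound of the form $\kappa(\epsilon + \epsilon^{1/2}\norm{x}^{1/2} + \epsilon^{1/4}\norm{x}^{3/4})$, matching the claim for $\ell=2$.

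For the inductive step, assume that $(\psi_{\ell-2}\comp\cdots\comp\psi_1)(\epsilon,\norm{x}) \leq \tilde\kappa \sum_{j=0}^{\ell-2}\epsilon^{2^{-j}}\norm{x}^{1-2^{-j}}$. Denote
$$
A \coloneqq \epsilon + (\psi_{\ell-2}\comp\cdots\comp\psi_1)(\epsilon,\norm{x}),
$$
so that the full composition equals $\psi_{\ell-1}(A,\norm{x}) = \kappa_{\ell-1}A + \kappa_{\ell-1}\sqrt{A\norm{x}}$. Since the $j=0$ term of the inductive sum is already $\epsilon$, I get $A \leq C\sum_{j=0}^{\ell-2}\epsilon^{2^{-j}}\norm{x}^{1-2^{-j}}$ for some constant $C$ depending on $\tilde\kappa$. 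The linear term $\kappa_{\ell-1}A$ is then absorbed directly into the desired sum. For the square-root term I again apply the subadditivity $\sqrt{\sum a_i}\leq \sum\sqrt{a_i}$ for nonnegative $a_i$, which yields
$$
\sqrt{A\norm{x}} \;\leq\; \sqrt{C}\sum_{j=0}^{\ell-2}\sqrt{\epsilon^{2^{-j}}\norm{x}^{1-2^{-j}}\cdot\norm{x}} \;=\; \sqrt{C}\sum_{j=0}^{\ell-2}\epsilon^{2^{-(j+1)}}\norm{x}^{1-2^{-(j+1)}}.
$$
Reindexing $j'=j+1$ produces the range $j' = 1,\ldots, \ell-1$, which together with the linear contribution covers all $j = 0, 1,\ldots,\ell-1$. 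Choosing $\kappa$ to dominate both constants completes the step.

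The argument is essentially bookkeeping, and I do not expect any serious obstacle; the only care needed is ensuring that (a) when applying $\sqrt{a+b}\leq \sqrt{a}+\sqrt{b}$ to a sum with possibly many positive terms, one uses its iterated form $\sqrt{\sum_i a_i}\leq \sum_i\sqrt{a_i}$ (an immediate consequence of the two-term case by induction), and (b) all constants remain finite and independent of $\epsilon$ and $\norm{x}$, which is automatic since they are built from finitely many applications of the $\kappa_i$.
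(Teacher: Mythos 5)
Your proof is correct and takes essentially the same route as the paper's: induction on $\ell$, monotonicity of $\psi_{\ell-1}$ in its first argument, subadditivity of the square root, and the identity $\sqrt{\epsilon^{2^{-j}}\norm{x}^{1-2^{-j}}\cdot\norm{x}} = \epsilon^{2^{-(j+1)}}\norm{x}^{1-2^{-(j+1)}}$, which is exactly how the paper generates the new exponent at each step. The only slip is an off-by-one in the base case: what you call ``$\ell=2$'' (the composition $\psi_2\comp\psi_1$) is the lemma's $\ell=3$ case, whereas the lemma's $\ell=2$ case is the single function $\psi_1 = \kappa_1(\epsilon+\sqrt{\epsilon\norm{x}})$, which satisfies the bound trivially with $\kappa=\kappa_1$ — this is precisely the paper's base case, and adding that one-line observation makes your induction complete.
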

\begin{proof}
We proceed by induction on $\ell$. 
For $\ell = 2$, it is enough to take $\kappa = \kappa _1$.
Now, suppose that the proposition is true for some $\ell > 2$. We will show that it is also true for $\ell + 1$. 
Let $\varphi = \psi _{{\ell}}\comp \ldots \comp \psi_{{1}}$. Since $\psi _{\ell}$ is monotone nondecreasing in each argument, we have by the induction hypothesis that there exists some $\tilde \kappa$ such that
\begin{align*}
 \varphi (\epsilon,\norm{x})& = \psi _{\ell}(\epsilon + \psi _{{\ell-1}}\comp \ldots \comp \psi_{{1}}(\epsilon,\norm{x}),\norm{x})\\
& \leq \psi _{\ell}\left(\epsilon + \tilde \kappa\sum _{j = 0}^{\ell-1} \epsilon^{({2^{-j}}) }\norm{x}^{1-{2^{-j}} },\norm{x}\right) \\
& \leq \kappa _{\ell}\epsilon +  \sum _{j = 0}^{\ell-1} \kappa _{\ell}\tilde \kappa \epsilon^{({2^{-j}}) }\norm{x}^{1-{2^{-j}}} + \kappa _{\ell}\sqrt{\epsilon\norm{x}}+
 \sum _{j = 0}^{\ell-1} \kappa _{\ell}\sqrt{\tilde \kappa} \epsilon^{({2^{-j-1}}) }\norm{x}^{1-({2^{-j-1}})},
 \end{align*}
where we used the fact that the square root satisfies $\sqrt{u + v} \leq \sqrt{u} + \sqrt{v}$, when $u$ and $v$ are nonnegative. Looking at the terms that appear in both summations, we see that it is possible to group the coefficients and so we obtain
\begin{align*}
\varphi (\epsilon,\norm{x}) \leq \kappa\sum _{j = 0}^{\ell}  \epsilon^{({2^{-j}}) }\norm{x}^{1-({2^{-j}}) },
\end{align*}
for $\kappa = \kappa _{\ell} + \kappa_{\ell}\tilde \kappa +\kappa_{\ell}\sqrt{\tilde \kappa}$.
\end{proof}

With that we have the following theorem.
\begin{theorem}[Error bounds for symmetric cones - 1st form]\label{theo:sym_err}
Let $\stdCone$ be a symmetric cone, $\stdSpace$ a subspace and $a \in \jAlg$ such that $(\stdCone,\stdSpace,a)$ is feasible.	
Then, there is a positive constant $\kappa$ (depending on $\stdCone,\stdSpace,a$) such that whenever  $x$ and $\epsilon$ satisfy the inequalities
$$
\quad \dist(x,\stdCone) \leq \epsilon, \quad \dist(x,\stdSpace + a) \leq \epsilon,
$$
we have 
	$$
	\dist\left(x, (\stdSpace + a) \cap \stdCone\right) \leq 
	(\kappa \norm{x} + \kappa)\left(\sum _{j = 0}^{\dpp(\stdSpace,a)}  \epsilon^{({2^{-j})} }\norm{x}^{1-{2^{-j}} }\right).
	$$
\end{theorem}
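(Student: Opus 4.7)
The plan is to assemble this result from the four main ingredients developed earlier: (i) symmetric cones are amenable (Proposition~\ref{prop:sym_am}), (ii) they admit {\FRFs} of the simple form $\kappa\epsilon + \kappa\sqrt{\epsilon\norm{x}}$ (Theorem~\ref{theo:sym_am}), (iii) there exists a short chain of faces ending in a face satisfying the PPS condition whose length is controlled by $\dpp(\stdSpace,a)$ (Proposition~\ref{prop:effic}), and (iv) the general error bound for amenable cones (Theorem~\ref{theo:err}). The last step is to bound the resulting composition of square-root functions by the explicit sum of powers in the statement, which is exactly what Lemma~\ref{lem:sym_res} provides.

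More concretely, first I would invoke Proposition~\ref{prop:effic} to produce a chain
\[
\stdFace_{\dpp(\stdSpace,a)+1} \subsetneq \cdots \subsetneq \stdFace_1 = \stdCone,
\]
together with reducing directions $z_i \in \stdFace_i^*\cap \stdSpace^\perp \cap \{a\}^\perp$, where the last face $\stdFace_{\dpp(\stdSpace,a)+1}$ satisfies the PPS condition with respect to $(\stdSpace,a)$. Since symmetric cones have nonempty interior, $\spanVec \stdCone = \jAlg$, so the hypothesis ``$x \in \spanVec \stdCone$'' in Theorem~\ref{theo:err} is automatic for every $x$. By Theorem~\ref{theo:sym_am}, for each $i = 1,\ldots,\dpp(\stdSpace,a)$ there is a positive constant $\kappa_i$ so that $\psi_i(\epsilon,\norm{x}) = \kappa_i\epsilon + \kappa_i\sqrt{\epsilon\norm{x}}$ is a \FRF{} for $\stdFace_i$ and $z_i$. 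Plugging these into Theorem~\ref{theo:err} (with amenability coming from Proposition~\ref{prop:sym_am}), we obtain a constant $\kappa' > 0$ and possibly rescaled $\psi_i$'s such that, whenever $\dist(x,\stdCone)\leq \epsilon$ and $\dist(x,\stdSpace+a)\leq \epsilon$,
\[
\dist(x,(\stdSpace+a)\cap \stdCone) \leq (\kappa'\norm{x}+\kappa')\bigl(\epsilon + \varphi(\epsilon,\norm{x})\bigr),
\]
where $\varphi = \psi_{\dpp(\stdSpace,a)}\comp \cdots \comp \psi_1$ when $\dpp(\stdSpace,a) \geq 1$, and $\varphi(\epsilon,\norm{x}) = \epsilon$ when $\dpp(\stdSpace,a) = 0$.

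Next I would apply Lemma~\ref{lem:sym_res} with $\ell - 1 = \dpp(\stdSpace,a)$ (so $\ell \geq 2$) to obtain a constant $\kappa'' > 0$ with
\[
\varphi(\epsilon,\norm{x}) \leq \kappa'' \sum_{j=0}^{\dpp(\stdSpace,a)} \epsilon^{2^{-j}}\norm{x}^{1-2^{-j}}.
\]
The ``$+\epsilon$'' term sitting outside $\varphi$ matches the $j=0$ summand $\epsilon^{1}\norm{x}^{0} = \epsilon$, so it is absorbed into the sum up to adjusting the constant. The edge case $\dpp(\stdSpace,a) = 0$ is handled directly: the chain has length one, $\varphi(\epsilon,\norm{x}) = \epsilon$, and the sum collapses to the single term $\epsilon$, so the stated bound holds by Theorem~\ref{theo:err} immediately. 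Merging all constants into a single $\kappa$ depending on $\stdCone,\stdSpace,a$ yields the conclusion.

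I do not foresee a genuine obstacle; the heavy lifting has already been done in Theorem~\ref{theo:sym_am} (proving that each facial step contributes only a square-root of $\epsilon\norm{x}$) and in Lemma~\ref{lem:sym_res} (tracking how $\ell-1$ such square roots compound into the exponents $2^{-j}$). The only minor bookkeeping point is to confirm that the power of $\norm{x}$ in each summand matches $1-2^{-j}$, which is precisely the exponent output by Lemma~\ref{lem:sym_res}, and that the prefactor $(\kappa'\norm{x}+\kappa')$ from Theorem~\ref{theo:err} is retained in the final statement as the $(\kappa\norm{x}+\kappa)$ multiplier.
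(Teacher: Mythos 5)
Your proposal is correct and follows essentially the same route as the paper: amenability from Proposition~\ref{prop:sym_am}, the square-root {\FRFs} from Theorem~\ref{theo:sym_am}, the chain of length $\dpp(\stdSpace,a)+1$ from Proposition~\ref{prop:effic} fed into Theorem~\ref{theo:err}, and Lemma~\ref{lem:sym_res} to convert the composition into the sum of powers, with the extra $\epsilon$ absorbed into the $j=0$ term and the $\dpp(\stdSpace,a)=0$ case treated separately. The only (harmless) addition is your explicit remark that $\spanVec\stdCone = \jAlg$ for symmetric cones, which the paper leaves implicit.
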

\begin{proof}
$\stdCone$ is an amenable cone, because of Proposition \ref{prop:sym_am}.
Therefore, we may apply Theorem \ref{theo:err} and Proposition \ref{prop:effic}, which tell us that there exists a positive constant $\tilde \kappa$ such that
\begin{equation}\label{eq:err_sym1}
	\dist\left(x, (\stdSpace + a) \cap \stdCone\right) \leq (\tilde \kappa \norm{x}+ \tilde \kappa) (\epsilon+\varphi(\epsilon,\norm{x}))
\end{equation}
where 
\[\varphi(\epsilon,\norm{x}) =
\begin{cases}
\psi _{\dpp(\stdSpace,a)} \comp \cdots \comp\psi _1(\epsilon,\norm{x})       & \quad \text{if } \dpp(\stdSpace,a) > 0\\
\epsilon  & \quad \text{if } \dpp(\stdSpace,a) = 0,
\end{cases}
\]
and the $\psi _i$ are 
{\FRFs} as in Theorem \ref{theo:sym_am}.	
Then, we apply Lemma \ref{lem:sym_res}, to obtain a constant $\kappa'$ such that 
$$
\varphi(\epsilon,\norm{x})\leq \kappa'\sum _{j = 0}^{\dpp(\stdSpace,a)}  \epsilon^{({2^{-j}}) }\norm{x}^{1-{2^{-j}} }.
$$
We then let $\hat \kappa = \kappa'+1$ so that 
\begin{equation}\label{eq:err_sym2}
\epsilon + \varphi(\epsilon,\norm{x}) \leq \hat \kappa \sum _{j = 0}^{\dpp(\stdSpace,a)} \epsilon^{({2^{-j}}) }\norm{x}^{1-{2^{-j}} }.
\end{equation}
Using \eqref{eq:err_sym2} in \eqref{eq:err_sym1}
and letting $\kappa = \tilde \kappa\hat \kappa$ gives the desired error bound. 
\end{proof}

We observe that if  $\stdCone$ is a symmetric cone and 
$d$ is taken to be the identity element $\stdInt$, then the generalized eigenvalue function $\eig{\stdInt}{\stdCone}(\cdot)$ discussed in Section~\ref{sec:eig} coincides with the minimum eigenvalue function $\lambda _{\min}(\cdot)$.
Following Remark~\ref{rem:dist}, we may also substitute 
the condition ``$\dist(x,\stdCone) \leq \epsilon$'' in Theorem \ref{theo:sym_err} by 
``$\lambda_{\min}(x) \geq - \epsilon$''.
Furthermore, if $x$ lies in some compact set and $\epsilon \leq 1$ then we can give a better looking error bound, where the sum is replaced by the term with smallest exponent. This leads to the second form of our error bounds results, which is closer to the way Sturm stated his error bound result.

\begin{proposition}[Error bounds for symmetric cones - 2nd form]\label{prop:sym_err2}
Let $\stdCone$ be a symmetric cone, $\stdSpace$ a subspace and $a \in \jAlg$ such that $(\stdCone,\stdSpace,a)$ is feasible.
Let $\rho$ be a positive real number.
Then, there exists a  positive constant $\kappa $ (depending on $\stdCone,\stdSpace,a,\rho$) such that for every $x$ and $\epsilon \leq 1$ satisfying	
$$
\quad \dist(x,\stdCone) \leq \epsilon, \quad \dist(x,\stdSpace + a) \leq \epsilon ,  \quad \norm{x} \leq \rho,
$$
we have 
$$
\dist\left(x, (\stdSpace + a) \cap \stdCone\right) \leq \kappa \epsilon^{({2^{-\dpp(\stdSpace,a)}})}.
$$
Furthermore, the proposition is still valid if we replace
the inequality ``$\dist(x,\stdCone) \leq \epsilon$'' by 
``$\lambda_{\min}(x) \geq - \epsilon$''.

\end{proposition}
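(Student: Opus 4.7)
The plan is to obtain Proposition~\ref{prop:sym_err2} as a direct consequence of Theorem~\ref{theo:sym_err}, using the bounded-domain hypotheses $\norm{x}\leq \rho$ and $\epsilon \leq 1$ to collapse the sum in the error bound into a single dominant term. First I would apply Theorem~\ref{theo:sym_err} to obtain a constant $\tilde\kappa > 0$ such that, whenever $\dist(x,\stdCone)\leq \epsilon$ and $\dist(x,\stdSpace+a)\leq \epsilon$,
\[
\dist\left(x,(\stdSpace+a)\cap\stdCone\right) \leq (\tilde\kappa\norm{x}+\tilde\kappa)\sum_{j=0}^{\dpp(\stdSpace,a)} \epsilon^{(2^{-j})}\norm{x}^{1-2^{-j}}.
\]

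Next I would bound each factor on the right using the additional hypotheses. Let $M = \max(1,\rho)$, so that $\norm{x}^{1-2^{-j}}\leq M$ for every $j\in\{0,\ldots,\dpp(\stdSpace,a)\}$, since the exponent $1-2^{-j}$ lies in $[0,1]$. For the $\epsilon$-factors, observe that $\epsilon\leq 1$ together with $2^{-j}\geq 2^{-\dpp(\stdSpace,a)}$ for $j\leq \dpp(\stdSpace,a)$ yields
\[
\epsilon^{(2^{-j})}\leq \epsilon^{(2^{-\dpp(\stdSpace,a)})}.
\]
Thus the sum has at most $\dpp(\stdSpace,a)+1$ terms, each of size $\leq M\,\epsilon^{(2^{-\dpp(\stdSpace,a)})}$. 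Combining with $\tilde\kappa\norm{x}+\tilde\kappa\leq \tilde\kappa(\rho+1)$, I obtain
\[
\dist\left(x,(\stdSpace+a)\cap\stdCone\right) \leq \tilde\kappa(\rho+1)(\dpp(\stdSpace,a)+1)\,M\,\epsilon^{(2^{-\dpp(\stdSpace,a)})},
\]
which establishes the first statement with $\kappa = \tilde\kappa(\rho+1)(\dpp(\stdSpace,a)+1)M$.

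For the final claim about replacing ``$\dist(x,\stdCone)\leq\epsilon$'' by ``$\lambda_{\min}(x)\geq -\epsilon$'', I would invoke Proposition~\ref{prop:sur} with $d=\stdInt$, noting that, as already remarked in the paper, $\lambda_{\min}(x)=\eig{\stdInt}{\stdCone}(x)$ in the symmetric-cone setting. The proposition supplies a constant $\kappa'>0$ such that $\lambda_{\min}(x)\geq -\epsilon$ implies $\dist(x,\stdCone)\leq \kappa'\epsilon$. Applying the first part of the proposition with $(\kappa'+1)\epsilon$ in place of $\epsilon$ (which still satisfies $(\kappa'+1)\epsilon\leq \kappa'+1$, so the bounded-$\epsilon$ regime is preserved up to a harmless rescaling that can be absorbed into the final constant) then gives the desired conclusion. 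No part of this argument is hard; the only thing to be careful about is the straightforward but slightly fiddly verification that $\epsilon\leq 1$ really is what makes the last term dominate, which is why the hypothesis is needed in this tighter restatement.
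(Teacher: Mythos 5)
Your proposal is correct and follows essentially the same route as the paper: apply Theorem~\ref{theo:sym_err}, use $\epsilon\leq 1$ and $\norm{x}\leq\rho$ to dominate every term $\epsilon^{(2^{-j})}\norm{x}^{1-2^{-j}}$ by a constant multiple of $\epsilon^{(2^{-\dpp(\stdSpace,a)})}$, and handle the eigenvalue version via Proposition~\ref{prop:sur} with $d=\stdInt$ (the paper does this through Remark~\ref{rem:dist}). One small point of hygiene: invoking the already-proven first part with $(\kappa'+1)\epsilon$ technically violates its hypothesis $\epsilon\leq 1$ when $(\kappa'+1)\epsilon>1$; this is harmless, as you indicate, but is cleanest fixed by instead feeding $(\kappa'+1)\epsilon$ into Theorem~\ref{theo:sym_err} (which has no such restriction) and repeating the one-line domination argument with the relaxed bound $\epsilon\leq\kappa'+1$, exactly as the paper's Remark~\ref{rem:dist} does.
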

\begin{proof}
We apply Theorem \ref{theo:sym_err} to $(\stdCone,\stdSpace,a)$. 
Let $ \hat \kappa$ be the obtained constant. 
Since $\epsilon \leq 1$, we have 
$$
\epsilon ^{({2^{-\dpp(\stdSpace,a)}})} \geq \epsilon ^{({2^{-j}})},
$$
for all $j = 0, \ldots, \ell - 1$. 
Recalling that $\norm{x} \leq \rho$, we have	\begin{align*}
	\dist\left(x, (\stdSpace + a) \cap \stdCone\right) & \leq 
	(\hat \kappa \norm{x} + \hat \kappa)\left(\sum _{j = 0}^{\dpp(\stdSpace,a)} \epsilon^{({2^{-j}}) }\norm{x}^{1-{2^{-j}} }\right)\\
	&\leq (\hat \kappa \rho + \hat \kappa)\left(\sum _{j = 0}^{\dpp(\stdSpace,a)}  \epsilon^{({2^{-\dpp(\stdSpace,a)})} }\rho^{1-{2^{-j}} }\right)\\
	& \leq \kappa \epsilon^{({2^{-\dpp(\stdSpace,a)}}) }, 
\end{align*}
where $\kappa$ is the square of the maximum among all the constants so far, that is,
$$
 \sqrt\kappa = \max\{\hat \kappa \rho + \hat \kappa,\max\{ \rho^{1-\frac{1}{2^j} }\mid j=0,\ldots,\dpp(\stdSpace,a) \}\}.
$$ 
The fact that error bound is still valid if we replace
the inequality ``$\dist(x,\stdCone) \leq \epsilon$'' by 
``$\lambda_{\min}(x) \geq - \epsilon$'' follows from Remark~\ref{rem:dist}. 
\end{proof}

\begin{remark}[Bounds on the distance to the PPS condition]\label{rem:sym}
  We 
	can use Proposition~\ref{prop:effic} to bound 
	the quantity $\dpp(\stdSpace,a)$ in both Theorem~\ref{theo:sym_err} and Proposition~\ref{prop:sym_err2}. For that, we need the following facts on a symmetric cone $\stdCone$.
	\begin{enumerate}[label=$(\roman*)$]
		\item The length $\ell_{\stdCone}$ of the longest chain of faces of $\stdCone$  satisfies $\ell_{\stdCone} = \matRank	\stdCone +1$.
		\item The distance to polyhedrality of $\stdCone$ satisfies 
		$\distP(\stdCone) \leq \matRank \stdCone-1$.
	\end{enumerate}
	Item $(i)$ is a result due to Ito and Louren\c{c}o, see Theorem 14 in \cite{IL17}.
	Then, Theorem~11 in \cite{LMT15} tell us that $1+\distP(\stdCone) \leq \ell _{\stdCone} -1$. It follows that $\distP(\stdCone) \leq \matRank \stdCone-1$, which is item $(ii)$. 	Therefore, if $\stdCone = \stdCone^1 \times \cdots \times \stdCone^s$ is the direct product of $s$ symmetric cones, we have the the following bound.
	$$
	\dpp(\stdSpace,a) \leq \min \left\{\dim (\stdSpace^\perp \cap \{a\}^\perp), \sum _{i=1}^{s} (\matRank \stdCone^i -1),\ds(\stdSpace,a) \right\}.
	$$
\end{remark}


%

\subsection{Intersection of cones}\label{sec:inter}
Suppose $\stdCone^1 \subseteq \jAlg$ and $\stdCone ^2\subseteq \jAlg$ are amenable cones.
It is not clear whether  $\stdCone^1 \cap \stdCone^2$ is also  amenable. Even if it turns out that $\stdCone^1 \cap \stdCone^2$ is indeed amenable, it is also not clear how to construct {\FRFa}s  
for  $\stdCone^1 \cap \stdCone^2$ from the {\FRFa}s of 
$\stdCone^1$ and $\stdCone^2$.
Therefore, at first glance, the results in Theorem \ref{theo:err} are not directly applicable.
Nevertheless, we will show in this subsection that it is still possible to give error bounds while sidestepping these issues.

Suppose $(\stdCone^1\cap \stdCone^2,\stdSpace,a)$ is feasible. 
Let $\hat \stdSpace, \hat a$ be such that 
$$
\hat \stdSpace + \hat a = \{ (x,x) \mid x \in \stdSpace+a \}.
$$
Due to Propositions \ref{prop:beaut_prev} and \ref{prop:ebtp}, $\stdCone^1 \times \stdCone^2$ is an amenable cone.
Furthermore, we can use as {\FRFa}s the sum of 
{\FRFs} for $\stdCone^1$ and $\stdCone^2$.
We will show in this subsection that it is possible to obtain error bounds for $(\stdCone^1\cap \stdCone^2,\stdSpace,a)$ through 
$(\stdCone^1\times \stdCone^2,\hat \stdSpace, \hat a) $.
Recall that, by convention (see Section \ref{sec:conv}), the inner product in $\jAlg \times \jAlg$ is such that if $(x,y),(\hat x, \hat y) \in \jAlg\times \jAlg$, we have $\inProd{(x,y)}{(\hat x, \hat y)} = \inProd{x}{\hat x} + \inProd{y}{\hat y}$. Then, for $x \in \jAlg$, it can be verified that
\begin{align}
\dist(x, \stdCone^1\cap \stdCone^2 ) \leq \epsilon & \Rightarrow \dist((x,x), \stdCone^1\times \stdCone^2 ) \leq \sqrt{2} \epsilon \label{eq:inter:1}\\
\dist(x, \stdSpace+a ) \leq \epsilon & \Rightarrow \dist((x,x), \hat \stdSpace + \hat a ) \leq \sqrt{2} \epsilon \label{eq:inter:2}\\
\dist(x, \stdCone^1\cap \stdCone^2 \cap (\stdSpace+a) ) & \leq \frac{1}{\sqrt{2}}\dist((x,x), (\stdCone^1\times \stdCone^2)\cap (\hat \stdSpace+\hat a) ). \label{eq:inter:3}
\end{align}
Then, the next proposition follows immediately from Theorem \ref{theo:err}.
\begin{proposition}[Error bound for the intersection of amenable cones]\label{prop:inter}
Suppose $\stdCone^1 \subseteq \jAlg$ and $\stdCone ^2\subseteq \jAlg$ are amenable cones. Suppose also that $(\stdCone^1\cap \stdCone^2,\stdSpace,a)$ is feasible.	Let $\hat \stdSpace, \hat a$ be such that 
$$
\hat \stdSpace + \hat a = \{ (x,x) \mid x \in \stdSpace+a \}.
$$
The following hold.
\begin{enumerate}[label=$(\roman*)$]
\item Let 
$$
\stdFace _{\ell}  \subsetneq \ldots \subsetneq \stdFace_1 = \stdCone^1\times\stdCone^2
$$
be a chain of faces of $\stdCone^1\times \stdCone^2$  together with $z_i \in \stdFace _i^*\cap\hat \stdSpace^\perp \cap \{\hat a\}^\perp$ such that 
that $(\stdFace _{\ell}, \hat \stdSpace, \hat a)$ satisfies the 
PPS condition and $\stdFace_{i+1} = \stdFace _i\cap \{z_i\}^\perp$ for every $i$.

For $i = 1,\ldots, \ell - 1$, let $\psi _{i}$ be a {\FRF} of $\stdCone^1\times\stdCone^2$ with respect to $\stdFace_{i}$, $z_i$. 
Then, after positive rescaling the $\psi _{i}$, there is a positive constant $\kappa$ (depending on $\stdCone^1,\stdCone^2,\stdSpace,a$) such that whenever  $x \in \spanVec (\stdCone^1\cap \stdCone^2)$ satisfies the inequalities
$$
\quad \dist(x,\stdCone^1\cap \stdCone^2) \leq \epsilon, \quad \dist(x,\stdSpace + a) \leq \epsilon,
$$
we have 
$$
\dist\left(x, (\stdSpace + a) \cap \stdCone^1\cap \stdCone^2 \right) \leq (\kappa \norm{x} + \kappa )(\epsilon+\varphi(\epsilon,\norm{x})),
$$
where $
\varphi = \psi _{{\ell-1}}\comp \cdots \comp \psi_{{1}}$, if $\ell \geq 2$. If $\ell = 1$, we let $\varphi$ be the function satisfying $\varphi(\epsilon, \norm{x}) = \epsilon$.	
\item There exists at least one chain satisfying the requirements in item $(i)$ of length $\dpp(\hat \stdSpace, \hat a )+1$, where $\dpp(\hat \stdSpace, \hat a )$ is the minimum number of facial reduction steps necessary to find a face satisfying the PPS condition for the problem 
$(\stdCone^1\times\stdCone^2,\hat \stdSpace,\hat a)$. The following inequality holds.
\begin{equation*}
\dpp(\hat \stdSpace, \hat a ) \leq \min \{\distP(\stdCone^1)+\distP(\stdCone^2), \dim( \hat \stdSpace^\perp \cap \{a\}^\perp), \ds(\hat\stdSpace,\hat a) \}.
\end{equation*}
\end{enumerate}
\end{proposition}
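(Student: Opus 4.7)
The plan is to lift the intersection problem $(\stdCone^1 \cap \stdCone^2, \stdSpace, a)$ to the product problem $(\stdCone^1 \times \stdCone^2, \hat\stdSpace, \hat a)$ via the diagonal embedding $x \mapsto (x,x)$, apply Theorem~\ref{theo:err} there, and translate back using \eqref{eq:inter:1}--\eqref{eq:inter:3}. The key structural input is Proposition~\ref{prop:beaut_prev}$(i)$, which guarantees that $\stdCone^1 \times \stdCone^2$ is amenable whenever $\stdCone^1$ and $\stdCone^2$ are.

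For item $(i)$, I would begin by observing that the hypotheses of Theorem~\ref{theo:err} are met for $(\stdCone^1 \times \stdCone^2, \hat\stdSpace, \hat a)$: the product cone is amenable, the chain $\stdFace_\ell \subsetneq \cdots \subsetneq \stdFace_1$ together with the reducing directions $z_i$ is exactly what Theorem~\ref{theo:err} needs, and $(\stdFace_\ell, \hat\stdSpace, \hat a)$ satisfies the PPS condition by assumption. Given $x \in \spanVec(\stdCone^1 \cap \stdCone^2)$ satisfying $\dist(x, \stdCone^1 \cap \stdCone^2) \leq \epsilon$ and $\dist(x, \stdSpace + a) \leq \epsilon$, I note first that $(x,x) \in \spanVec \stdCone^1 \times \spanVec \stdCone^2 = \spanVec(\stdCone^1 \times \stdCone^2)$, since $\spanVec(\stdCone^1 \cap \stdCone^2) \subseteq \spanVec \stdCone^1 \cap \spanVec \stdCone^2$. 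Then inequalities \eqref{eq:inter:1} and \eqref{eq:inter:2} yield
$$
\dist((x,x), \stdCone^1 \times \stdCone^2) \leq \sqrt{2}\,\epsilon, \qquad \dist((x,x), \hat\stdSpace + \hat a) \leq \sqrt{2}\,\epsilon.
$$
Applying Theorem~\ref{theo:err} to $(x,x)$ with tolerance $\sqrt{2}\epsilon$ gives a bound on $\dist((x,x), (\stdCone^1 \times \stdCone^2) \cap (\hat\stdSpace + \hat a))$ in terms of $\psi_{\ell-1} \comp \cdots \comp \psi_1$ evaluated at $(\sqrt{2}\epsilon, \norm{(x,x)}) = (\sqrt{2}\epsilon, \sqrt{2}\norm{x})$. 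Finally, \eqref{eq:inter:3} converts this into a bound on $\dist(x, (\stdSpace+a) \cap \stdCone^1 \cap \stdCone^2)$, losing only a factor of $1/\sqrt{2}$.

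The multiplicative constants $\sqrt{2}$ appearing in the arguments and outside $\varphi$ are absorbed by positively rescaling each $\psi_i$; this is legitimate because positive rescalings of {\FRFs} are again {\FRFs} (as used systematically throughout Section~\ref{sec:err}, for instance in Proposition~\ref{prop:ebtp} and Lemma~\ref{lem:am_chain}), and the new overall constant $\kappa$ can absorb all the resulting multiplicative overheads as well as $\norm{x}$-versus-$\norm{(x,x)}$ discrepancies. This yields the stated error bound with $\varphi = \psi_{\ell-1} \comp \cdots \comp \psi_1$ (or $\varphi(\epsilon, \norm{x}) = \epsilon$ when $\ell = 1$).

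For item $(ii)$, existence of a chain of length $\dpp(\hat\stdSpace,\hat a)+1$ meeting the requirements of item $(i)$ is immediate from the definition of $\dpp$ applied to $(\stdCone^1 \times \stdCone^2, \hat\stdSpace, \hat a)$. Once we apply Proposition~\ref{prop:effic} to the product problem, the three bounds fall out directly: item~$(i)$ of that proposition with $s=2$ gives $\dpp(\hat\stdSpace,\hat a) \leq \distP(\stdCone^1) + \distP(\stdCone^2)$; item~$(ii)$ gives $\dpp(\hat\stdSpace,\hat a) \leq \dim(\hat\stdSpace^\perp \cap \{\hat a\}^\perp)$; and item~$(iii)$ gives $\dpp(\hat\stdSpace,\hat a) \leq \ds(\hat\stdSpace,\hat a)$. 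Taking the minimum yields the desired inequality. The only mildly delicate point I anticipate is making the $\sqrt{2}$ bookkeeping and the $\norm{x}$-translation in item $(i)$ explicit, but since the positive rescaling formalism was designed precisely to handle such changes of units, no real obstacle arises.
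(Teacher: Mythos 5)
Your proposal is correct and follows essentially the same route as the paper: apply Theorem~\ref{theo:err} to the lifted problem $(\stdCone^1\times\stdCone^2,\hat\stdSpace,\hat a)$, translate back via \eqref{eq:inter:1}--\eqref{eq:inter:3} with the $\sqrt{2}$ factors absorbed by positively rescaling the $\psi_i$, and deduce item $(ii)$ from Proposition~\ref{prop:effic}. Your write-up is in fact more explicit about the bookkeeping than the paper's two-line proof.
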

\begin{proof}
Item $(i)$ is a consequence of applying Theorem \ref{theo:err} to $(\stdCone^1\times \stdCone^2, \hat \stdSpace, \hat a)$ together with \eqref{eq:inter:1}, \eqref{eq:inter:2} and \eqref{eq:inter:3}, rescaling the functions $\psi_{i}$ if necessary. 
Item $(ii)$ is a direct consequence of Proposition \ref{prop:effic}.
\end{proof}
We conclude this subsection with an application of Proposition~\ref{prop:inter}. 
Let $\mathcal{N}^n$ denote the cone of $n\times n$ symmetric matrices with nonnegative entries. Then, the doubly nonnegative cone $\doubly{n}$ is defined as
the intersection $\PSDcone{n}\cap \mathcal{N}^n$. It corresponds to the matrices that 
are simultaneously positive semidefinite and nonnegative.
The cone $\doubly{n}$ has found many applications recently, see \cite{Yoshise10,KKT16,AKKT2014,AKKT17}.
\begin{proposition}[Error bound for the doubly nonnegative cone]\label{prop:doubly}
	
Suppose $(\doubly{n},\stdSpace,a)$ is feasible.
Then, there is a positive 
constant $ \kappa$ (depending on $n,\stdSpace,a$) such that whenever  $x$  satisfies the inequalities
$$
\quad \dist(x,\doubly{n}) \leq \epsilon, \quad \dist(x,\stdSpace + a) \leq \epsilon,
$$
we have 
$$
\dist\left(x, (\stdSpace + a) \cap \doubly{n}\right) \leq 
(\kappa \norm{x} + \kappa)\left(\sum _{j = 0}^{\dpp(\hat \stdSpace, \hat a )} \epsilon^{({2^{-j}}) }\norm{x}^{1-{2^{-j}} }\right),
$$
where $\hat \stdSpace$ and $\hat a$ are as in Proposition \ref{prop:inter}. 
Furthermore, 
$$
\dpp(\hat \stdSpace, \hat a ) \leq \min \left\{n-1,\dim (\hat\stdSpace^\perp \cap \{\hat a\}^\perp), \ds(\hat\stdSpace,\hat a)  \right\}.
$$
\end{proposition}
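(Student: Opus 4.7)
The plan is to reduce the problem to Proposition~\ref{prop:inter} applied with $\stdCone^1 = \PSDcone{n}$ and $\stdCone^2 = \mathcal{N}^n$, since $\doubly{n} = \PSDcone{n} \cap \mathcal{N}^n$. First I would check that both factors are amenable: $\PSDcone{n}$ is a symmetric cone, hence amenable by Proposition~\ref{prop:sym_am}, while $\mathcal{N}^n$ is polyhedral, hence amenable by item $(ii)$ of Proposition~\ref{prop:beaut}. This lets us invoke item $(i)$ of Proposition~\ref{prop:inter}, producing a chain of faces of $\PSDcone{n}\times \mathcal{N}^n$ of length $\ell = \dpp(\hat\stdSpace,\hat a)+1$ ending in a face that satisfies the PPS condition for $(\PSDcone{n}\times\mathcal{N}^n, \hat\stdSpace, \hat a)$.

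The next step is to identify the facial residual functions attached to this chain. Any face of $\PSDcone{n}\times \mathcal{N}^n$ splits as a product $\stdFace^1\times \stdFace^2$, with $\stdFace^1 \face \PSDcone{n}$ and $\stdFace^2 \face \mathcal{N}^n$, and by item $(i)$ of Proposition~\ref{prop:ebtp} an {\FRFa} for the product face is (up to positive rescaling) the sum of individual {\FRFa}s. For $\PSDcone{n}$ we use Theorem~\ref{theo:sym_am}, which provides {\FRFa}s of the form $\kappa\epsilon + \kappa\sqrt{\epsilon\norm{x}}$, and for $\mathcal{N}^n$ Proposition~\ref{prop:poly_amenable} provides {\FRFa}s of the form $\kappa\epsilon$. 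Adding these, each $\psi_i$ in the chain can be taken to have the form $\kappa_i\epsilon + \kappa_i\sqrt{\epsilon\norm{x}}$, since the polyhedral linear contribution is absorbed into the square-root term without changing its structural shape.

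I would then feed this into Lemma~\ref{lem:sym_res}, which bounds $\psi_{\ell-1}\comp\cdots\comp \psi_1$ by a constant times $\sum_{j=0}^{\ell-1}\epsilon^{2^{-j}}\norm{x}^{1-2^{-j}}$. Substituting into the conclusion of Proposition~\ref{prop:inter}$(i)$ and absorbing the extra $\epsilon$ outside $\varphi$ into the sum (exactly as in the proof of Theorem~\ref{theo:sym_err}), the stated bound follows with a suitable $\kappa$ depending on $n,\stdSpace,a$.

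For the bound on $\dpp(\hat\stdSpace,\hat a)$, I would combine item $(ii)$ of Proposition~\ref{prop:inter} with the facts $\distP(\mathcal{N}^n) = 0$ (since $\mathcal{N}^n$ is polyhedral, see Example 1 in \cite{LMT15}) and $\distP(\PSDcone{n}) \leq \matRank \PSDcone{n}-1 = n-1$ from Remark~\ref{rem:sym}. This gives $\dpp(\hat\stdSpace,\hat a)\le \min\{n-1, \dim(\hat\stdSpace^\perp\cap\{\hat a\}^\perp), \ds(\hat\stdSpace,\hat a)\}$ as required. The main obstacle is essentially bookkeeping: one has to be careful that when summing a quadratic-type and a linear-type {\FRFa} the resulting positive rescaling still has the form required by Lemma~\ref{lem:sym_res}, but this is precisely the ``sanity check'' alluded to after Proposition~\ref{prop:poly_amenable} and presents no real difficulty.
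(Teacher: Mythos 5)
Your proposal is correct and follows essentially the same route as the paper: apply Proposition~\ref{prop:inter} to $\doubly{n}=\PSDcone{n}\cap\mathcal{N}^n$, combine the {\FRFs} from Theorem~\ref{theo:sym_am} and Proposition~\ref{prop:poly_amenable} via item~$(i)$ of Proposition~\ref{prop:ebtp} into the form $\kappa_i\epsilon+\kappa_i\sqrt{\epsilon\norm{x}}$, then use Lemma~\ref{lem:sym_res} as in Theorem~\ref{theo:sym_err}, and bound $\dpp(\hat\stdSpace,\hat a)$ via item~$(ii)$ of Proposition~\ref{prop:inter} with $\distP(\PSDcone{n})\leq n-1$ and $\distP(\mathcal{N}^n)=0$. (Only a trivial wording quibble: the polyhedral contribution $\kappa\epsilon$ is absorbed into the linear term of the combined \FRFa, not the square-root term.)
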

\begin{proof}
We apply Proposition~\ref{prop:inter} to $\doubly{n} = \PSDcone{n} \cap \mathcal{N}^n$.
From item $(i)$ of Proposition~\ref{prop:ebtp}, 
we know that {\FRFs}  for $\PSDcone{n} \times \mathcal{N}^n$ can be taken to be positive rescalings of the sum of {\FRFs}  for $\PSDcone{n}$ and 
$\mathcal{N}^n$. 
From Theorem \ref{theo:sym_am} and Proposition \ref{prop:poly_amenable}, 
we conclude that  {\FRFs} for $\PSDcone{n} \times \mathcal{N}^n$ can be taken to be
$$
\psi _{i}(\epsilon,\norm{x}) = \kappa _i\epsilon +  \kappa_i \sqrt{\epsilon \norm{x}}. 
$$
Then, we apply Lemma \ref{lem:sym_res} and proceed as in 
the proof of Theorem \ref{theo:sym_err}.

The bound on $\dpp(\hat \stdSpace, \hat a )$ follows from item $(ii)$ of 
Proposition \ref{prop:inter} and the fact 
the $\distP(\PSDcone{n}) \leq n-1$ (see Remark~\ref{rem:sym}) and $\distP(\mathcal{N}^n) = 0$, since $\mathcal{N}^n$ is a polyhedral cone.
\end{proof}
We can also prove a result similar to Proposition \ref{prop:sym_err2}. For example, if  we impose $\epsilon \leq 1$ and $\norm{x} \leq \rho$ and  use the fact that $\dpp(\hat \stdSpace, \hat a ) \leq n-1$, we may adjust the constant $\kappa$ so that the bound becomes
\begin{equation}\label{eq:dnn}
\dist\left(x, (\stdSpace + a) \cap \doubly{n}\right) \leq \kappa \epsilon^{({2^{1-n}})},
\end{equation}
where, analogous to Proposition~\ref{prop:sym_err2}, $\kappa$ depends on $\doubly{n}, \stdSpace, a, \rho$.

\begin{remark}
In Example~2 of \cite{ST00}, Sturm constructed a subspace $\stdSpace \subseteq \S^n$ and a sequence of matrices $\{x_{\epsilon}\mid \epsilon >0  \}$ contained in $\PSDcone{n}$, such that $\dist(x_{\epsilon},\stdSpace) < \epsilon$ but $\dist(x_{\epsilon},\PSDcone{n}\cap \stdSpace) \geq \epsilon^{1/2^{n-1}}$, for every $\epsilon$. 
This happens because all matrices in $\PSDcone{n}\cap \stdSpace$ are such that their $(1,n)$ entry is $0$, while the $(1,n)$ entry of $x_{\epsilon}$ is $\epsilon^{1/2^{n-1}}$.

Therefore, apart from the constant $\kappa$, Sturm's error bound can be tight. However, a closer inspection shows that the $x_{\epsilon}$ are, in fact, doubly nonnegative matrices and the same reasoning shows that $\dist(x_{\epsilon},\doubly{n}\cap \stdSpace) \geq \epsilon^{1/2^{n-1}}$. It follows that \eqref{eq:dnn} is also tight in the same sense.
\end{remark}

\section{Conclusion and open problems}\label{sec:conc}
In this paper, we introduced the concepts of \emph{amenable cones} and \emph{\FRFs} (\FRFa s), which makes it possible to derive error bound 
results for problems that do not satisfy regularity conditions. 
As applications, we gave H\"olderian error bounds for symmetric cones (Theorem~\ref{theo:sym_err} and Proposition~\ref{prop:sym_err2}) and 
for doubly nonnegative cones (Proposition~\ref{prop:doubly}). 
In summary, given some pointed closed convex cone $\stdCone$, we need the following ingredients for obtaining error bounds under our approach:
\begin{enumerate}
	\item First, it is necessary to prove that $\stdCone$ is \emph{amenable} (Definition \ref{def:beaut}). 
	\item Then, we must work out the {\FRFs} as in 
	Theorem \ref{theo:sym_am}.
	\item Finally, we apply Theorem \ref{theo:err} and try to 
	obtain an upper bound for the composition of {\FRFs} as in Lemma \ref{lem:sym_res}. 
	We can further restrict $\epsilon$ (as in Propositions~\ref{prop:sym_err2}) and/or change the distance functions (Remark~\ref{rem:dist}).
\end{enumerate} 
If it is hard to prove that $\stdCone$ is amenable, we might be able to express $\stdCone$ as an intersection of amenable cones and apply Proposition \ref{prop:inter}.

We will now point out some open questions and directions 
for future work.
\begin{enumerate}
	\item \emph{Which cones are amenable? Do they admit simple {\FRFs}?}
We proved that symmetric cones are amenable, but how about homogeneous cones? An answer to this question might follow from the fact that homogeneous cones are ``slices of positive semidefinite cones'', see the work by Chua \cite{CH03} and Proposition 1 together with section 4 of the work by Faybusovich \cite{FB02}. 

Another interesting family of cones to investigate are the 
$p$-cones. They are defined as $$\SOC{p}{n}  = \{(t,x) \in \Re\times \Re^{n-1} \mid t \geq \norm{x}_p \},$$
where $\norm{\cdot}_p$ denotes the $p$-norm. 
For $p = 1$ or $p = \infty$ or $n < 3$, $\SOC{p}{n}$ is polyhedral and hence it is amenable by Proposition \ref{prop:beaut}.
If $p = 2$, then $\SOC{p}{n} $ becomes the second order cone which is a symmetric cone, so Theorem \ref{theo:sym_am} applies.
It remains to analyze the case $1 < p < \infty$, $p \neq 2$ and $n \geq 3$. For those $p$, since $\SOC{p}{n}$ is strictly convex, we know from Proposition~\ref{prop:beaut} that $\SOC{p}{n}$ is  amenable. 
However, we do not know how obtain {\FRFa}s that are 
simpler than the canonical one. We remark that it was recently shown that $\SOC{p}{n}$ is not homogeneous for those $p$, see the work by Ito and Louren\c{c}o \cite{IL17_2}. Therefore, computing 
{\FRFa}s for homogeneous cones will not be helpful here. 
We conjecture that  $\SOC{p}{n}$ admit {\FRFa}s  of the form $\kappa \epsilon + \kappa(\norm{x} \epsilon)^{1/p}$.

\item It might be possible to relax Definition \ref{def:beaut} and obtain error bound results for cones that are not
amenable. For example, we could require that for every face $\stdFace$ of $\stdCone$, there should be some $\kappa > 0$ and 
$\gamma \in (0,1]$ such that   
$$
\dist(x, \stdFace) \leq \kappa \dist(x,\stdCone)^\gamma,
$$
for every $x \in \spanVec \stdFace$ satisfying $\dist(x,\stdCone) \leq 1$.
For cones satisfying this property,  it seems that a result similar to 
Proposition \ref{prop:err2} might hold. 
In this case, it could be possible to obtain a result analogous to Theorem \ref{theo:err}.

\end{enumerate}
\appendix
\section{Miscellaneous proofs}\label{app:proof}
\subsubsection*{Proof of Proposition \ref{prop:beaut_prev}}
		\begin{enumerate}[label=$(\roman*)$]	
			\item Let $\stdFace$ be a face of $\stdCone^1\times \stdCone^2$. We have $\stdFace = \stdFace ^1 \times \stdFace^2$, where $\stdFace^1$ and $\stdFace^2$ are faces of $\stdCone^1$ and $\stdCone^2$ respectively. 
			From our assumptions in Section \ref{sec:prel}, Equation~\eqref{eq:prod_proj} and the amenability of $\stdCone^1$ and $\stdCone^2$, it follows that  there are positive constants $\kappa_1, \kappa _2$ such that 
			$$
			\dist((x_1,x_2),\stdFace) \leq \sqrt{\kappa_1^2\dist(x_1,\stdCone^1)^2 + 
				\kappa _2^2\dist(x_2,\stdCone^2)^2},
			$$
			whenever $x_1 \in \spanVec \stdFace^1$ and $x_2 \in \spanVec \stdFace^2$. Therefore,
			\begin{align*}
			\dist((x_1,x_2),\stdFace) &\leq \max\{\kappa_1,\kappa_2\}\sqrt{\dist(x_1,\stdCone^1)^2 + \dist(x_2,\stdCone^2)^2}\\
			& = \max\{\kappa_1,\kappa_2\}\dist((x_1,x_2), \stdCone^1\times \stdCone^2),
			\end{align*}
			whenever $(x_1,x_2) \in \spanVec (\stdFace^1\times \stdFace^2) = (\spanVec \stdFace^1) \times (\spanVec \stdFace^2)$. 
			\item If $\stdMap$ is the zero map, we are done, since 
			$\{0\}$ is amenable. So, suppose that $\stdMap$ is a nonzero injective linear map.
			Then, the faces of $\stdMap(\stdCone)$ are images of faces of $\stdCone$ by $\stdMap$. 
			Accordingly, let $\stdFace \face \stdCone$.
			Because $\stdCone$ is amenable, there is $\kappa$ such that 
			\begin{align}
			\dist(x,\stdFace) \leq \kappa \dist(x, \stdCone), \quad \forall x \in \spanVec \stdFace.  \label{eq:beaut:im}
			\end{align}
			
			As $\stdMap$ is a linear map, we have $\spanVec \stdMap(\stdFace) = \stdMap(\spanVec \stdFace)$.  
			Let $\sigma _{\min}, \sigma _{\max}$ denote, respectively, the minimum and maximum singular values of $\stdMap$. We have
			\begin{align*}
			\sigma _{\min} = \min \{\norm{Ax} \mid \norm{x} = 1 \}, \quad 
			\sigma _{\max} = \max \{\norm{Ax} \mid \norm{x} = 1 \}.
			\end{align*}
			They are both positive since $\stdMap$ is injective and nonzero. 
			Now, let  $x \in \spanVec \stdFace$, then
			\begin{align*}
			\dist(\stdMap(x),\stdMap(\stdFace)) & \leq \sigma _{\max }\dist(x,\stdFace)&\\
			& \leq {\kappa}{\sigma _{\max}}\dist(x,\stdCone)& \text{(From \eqref{eq:beaut:im})}\\
			& \leq \frac{{\kappa}{\sigma _{\max}}}{\sigma _{\min}}\dist(\stdMap(x),\stdMap(\stdCone)). &
			\end{align*}
		\end{enumerate}	
\subsubsection*{Proof of Proposition \ref{prop:am_others}}
\begin{proof}
	\fbox{$(i) \Rightarrow (ii)$}
	Let $x,u \in \jAlg$ be such that $x+u\in \spanVec \stdFace$ and $\norm{u} = \dist(x,\spanVec \stdFace)$. Since $\dist(\cdot,\stdCone)$ and 
	$\dist(\cdot,\stdFace)$ are sublinear functions, we have that \eqref{eq:def_am} implies that
	\begin{align}
	\dist(x,\stdFace) &\leq \dist(-u,\stdFace) + \dist(x+u,\stdFace) \notag \\
	& \leq \dist(x,\spanVec \stdFace) + \kappa(\dist(x+u,\stdCone)) \notag\\
	& \leq \dist(x,\spanVec \stdFace) + \kappa(\dist(x,\stdCone)+\dist(x,\spanVec \stdFace))\notag \\
	& \leq (1+\kappa)(\dist(x,\stdCone)+\dist(x,\spanVec \stdFace)), \quad \forall x \in \jAlg. \label{eq:subtr}
	\end{align}
	Here we used the fact that $\dist(-u,\stdFace) \leq \norm{-u}$, since $0 \in \stdFace$.
	This shows that $(i) \Rightarrow (ii)$.
	
	\fbox{$(ii) \Rightarrow (i)$}	Since
	$\stdCone$ and $\spanVec \stdFace$ intersect at $0$ subtransversally, there is $\delta > 0$ such that
	$$
	\dist(z,\stdFace) \leq \kappa(\dist(z,\stdCone)+\dist(z,\spanVec \stdFace)), \quad \forall z \text{ with } \norm{z} \leq \delta.
	$$
	Therefore, if $x \in \jAlg$ is nonzero, we have
	$$
	\dist(\delta \frac{x}{\norm{x}},\stdFace) \leq \kappa(\dist(\delta \frac{x}{\norm{x}},\stdCone)+\dist(\delta \frac{x}{\norm{x}},\spanVec \stdFace)).
	$$
	Now, we recall that if $C$ is a convex cone, then $\dist(\alpha x, C) = \alpha \dist(x,C)$ for every positive $\alpha$. We conclude that 
	$$
	\dist(x,\stdFace) \leq \kappa(\dist(x,\stdCone)+\dist(x,\spanVec \stdFace)), \quad \forall x \in \jAlg.
	$$%
	Therefore, if $x \in \spanVec \stdFace$, then 
	$\dist(x, \stdFace) \leq \kappa \dist(x,\stdCone)$.
	
	\fbox{$(i) \Rightarrow (iii)$} The inequality in \eqref{eq:subtr} shows that 
	$$\dist(x,\stdFace) \leq (2+2\kappa)\max(\dist(x,\stdCone), \dist(x,\spanVec \stdFace)), \quad \forall x \in \jAlg.	
	$$
	Therefore, $\stdCone$ and $\spanVec \stdFace$ are boundedly linearly regular.
	
	\fbox{$(iii) \Rightarrow (ii)$} Let $U = \{x \in \jAlg \mid \norm{x} \leq 1 \}$. Then, there exists $\kappa _U$ such that 
	\begin{align*}
	\dist(x,\stdFace) &\leq \kappa _U\max(\dist(x,\stdCone), \dist(x,\spanVec \stdFace)) \\ 
	& \leq \kappa_U (\dist(x,\stdCone)+ \dist(x,\spanVec \stdFace)), \quad \forall x \in U.	
	\end{align*}
	Therefore, $\stdCone$ and $\spanVec \stdFace$ intersect subtransversally at $0$. 
\end{proof}

\subsubsection*{Proof of Proposition \ref{prop:ebtp}}
	\begin{enumerate}
			\item 
			Suppose that $x \in \spanVec \stdCone$ satisfies  the inequalities
			\begin{equation}\label{eq:prod}
			\dist(x,\stdCone) \leq \epsilon, \quad \inProd{x}{z} \leq \epsilon, \quad \dist(x, \spanVec \stdFace ) \leq \epsilon.
			\end{equation}
			
			Note that
			$$
			\stdFace\cap \{z\}^{\perp} = (\stdFace^{1} \cap\{z_1\}^\perp) \times (\stdFace^{2} \cap\{z_2\}^\perp). 
			$$
			Also, due to our assumptions (Section \ref{sec:conv}), we have
			$$\norm{x-y}^2 = \norm{x_1-y_1}^2 + \norm{x_2-y_2}^2$$ for every 
			$x,y \in \jAlg^1\times \jAlg^2$.
			Thus we have the following implications:
			\begin{align}
			\dist(x,\stdCone) \leq \epsilon \quad &  \Rightarrow\quad \dist(x_1,\stdCone^1) \leq \epsilon, \quad \dist(x_2,\stdCone^2) \leq \epsilon \label{eq:im1}\\
			\dist(x,\spanVec \stdFace ) \leq \epsilon\quad &\Rightarrow\quad  \dist(x_1,\spanVec \stdFace^1) \leq \epsilon, \quad  \dist(x_2,\spanVec \stdFace^2) \leq \epsilon \label{eq:im2}
			\end{align}
			The first step is showing that there are positive constants $\kappa _1$ and $\kappa _2$ such that for all $x \in \jAlg^1\times\jAlg^2$, we also have
			\begin{align}
			x \text{ satisfies \eqref{eq:prod}} &  \Rightarrow\quad \inProd{x_1}{z_1} \leq \kappa_1\epsilon\quad \text{and} \quad
			\inProd{x_2}{z_2} \leq \kappa _2\epsilon.\label{eq:x_prod}
			\end{align}
			Suppose $x$ satisfies \eqref{eq:prod}.		
			By \eqref{eq:im2}, we have $\dist(x_1, \spanVec{\stdFace^1}) \leq \epsilon$. 
			Therefore, there exists $y_1 \in \jAlg^1$ such that $x_1 + y_1 \in \spanVec{\stdFace^1} $ and $\norm{y_1} \leq \epsilon$. 
			Due to \eqref{eq:im1} and the amenability of $\stdCone^1$, there exists $\hat \kappa_1$ (not depending on $x_1$) such that
			\begin{equation*}
			\dist(x_1 + y_1, \stdFace^1) \leq \hat \kappa_1 \dist(x_1 + y_1, \stdCone^1) \leq 2\epsilon \hat \kappa_1. 
			\end{equation*}
			Therefore, there exists $v_1 \in \jAlg^1$ such that $\norm{v_1} \leq 2\epsilon \hat \kappa_1$ and $$x_1 + y_1 + v_1 \in \stdFace^1.$$
			In a completely analogous manner, there is a constant $\hat \kappa_2> 0$ and there are $y_2,v_2 \in \jAlg^2$
			such  $$x_2 + y_2 + v_2 \in \stdFace^2,$$ with $\norm{y_2} \leq \epsilon$ and $\norm{v_2} \leq 2\epsilon \hat \kappa_2$. It follows 
			that 
			\begin{align*}
			\inProd{(x_1+y_1+v_1,x_2+y_2+v_2)}{(z_1,z_2)} \leq M\epsilon,
			\end{align*}
			for $M = 1 + \norm{z_1} + 2\hat \kappa_1 + \norm{z_2} + 2\hat \kappa_2$. Since $\inProd{x_1+y_1+v_1}{z_1} \geq 0$ and 
			$\inProd{x_2+y_2+v_2}{z_2} \geq 0$, we get 
			\begin{align*}
			\inProd{x_i+y_i+v_i}{z_i} \leq M\epsilon,		
			\end{align*}
			for $i = 1,2$. We then conclude that 
			\begin{align}
			\inProd{x}{z} \leq \epsilon \quad \Rightarrow \quad \inProd{x_i}{z_i} \leq \kappa_i\epsilon,\label{eq:xz}	
			\end{align}
			whenever $x$ satisfies \eqref{eq:im1} and \eqref{eq:im2}, where $\kappa _i = M+ \norm{z_i} + \norm{z_i}2\hat \kappa_i$.
			
			Now, let $\psi_{\stdFace_1,z_1}$ and $\psi_{\stdFace_2,z_2}$ be arbitrary {\FRFs} for 
			$\stdFace_1,z_1$ and $\stdFace_2,z_2$, respectively.
			We positive rescale $\psi_{\stdFace_1,z_1}$ and 
			$\psi_{\stdFace_2,z_2}$ so that 
			\begin{align*}
			\dist(x_i,\stdCone) \leq \epsilon,\quad \inProd{x_i}{z_i} \leq \kappa_i\epsilon, \quad \dist(x, \spanVec \stdFace_i ) \leq \epsilon  
			\end{align*}
			implies $\dist(x_i,\hat \stdFace_i) \leq  \psi_{\stdFace_i,z_i}(\epsilon,\norm{x_i})$, for $i = 1,2$. 
			
			Finally, from \eqref{eq:im1}, \eqref{eq:im2}, \eqref{eq:xz} and using the fact that $\psi_{\stdFace_1,z_1}$ and $\psi_{\stdFace_2,z_2}$ are monotone nondecreasing on the second argument we conclude that whenever $x$ satisfies \eqref{eq:prod} we have
			\begin{align}
			\dist(x,\hat \stdFace)& = 
			\sqrt{\dist(x_1,\hat \stdFace^1)^2 + \dist(x_2,\hat \stdFace^2)^2  }\notag \\
			&\leq {\dist(x_1,\hat \stdFace^1)} +  {\dist(x_2,\hat \stdFace^2)} \notag \\
			&\leq \psi_{\stdFace_1,z_1}(\epsilon,\norm{x}) + \psi_{\stdFace_2,z_2}(\epsilon,\norm{x}). \label{eq:res_sum} \notag
			\end{align}
			Therefore, $\psi_{\stdFace_1,z_1}+\psi_{\stdFace_2,z_2}$ is a {\FRF} for $\stdFace,z$. 
			
			\item
			%
			The proposition is true if $\stdMap$ is the zero map, so suppose that $\stdMap$ is a nonzero injective linear map.
			First, we observe that 
			$$
			(\stdMap(\stdFace))\cap \{z\}^\perp = \stdMap(\stdFace\cap\{\stdMap^\T z \}^\perp ).
			$$

			Let $\hat \stdFace = \stdFace\cap\{\stdMap^\T z \}^\perp$.
			Let $\psi _{\stdFace,\stdMap^\T z}$ be a 
			{\FRF} for $\stdFace$ and $\stdMap^\T z$. 
			Let $\sigma _{\min}$ denote the minimum singular value of
			$\stdMap$. We note that $\sigma _{\min}$ is positive because $\stdMap$ is injective.
			We 
			positive rescale $\psi _{\stdFace,\stdMap^\T z}$
			so that whenever $x$ satisfies
			$$
			\dist(x,\stdCone) \leq \frac{1}{\sigma_{\min}}\epsilon, \quad \inProd{x}{\stdMap^\T z} \leq \epsilon, \quad \dist(x, \spanVec \stdFace ) \leq \frac{1}{\sigma_{\min}}\epsilon 
			$$
			we have:
			$$
			\dist(x, \hat \stdFace)  \leq 
			\psi _{\stdFace,\stdMap^\T z} (\epsilon, \norm{x}).
			$$
			Then, we have the following implications:
			\begin{align*}
			\dist(\stdMap(x),\stdMap(\stdCone)) \leq \epsilon \quad & \Rightarrow \quad \dist(x,\stdCone) \leq \frac{1}{\sigma_{\min}}\epsilon \\
			\inProd{\stdMap(x)}{z} \leq \epsilon \quad & \Leftrightarrow \quad \inProd{x}{\stdMap^\T z} \leq \epsilon\\
			\dist(\stdMap(x),\spanVec \stdMap(\stdFace)) \leq \epsilon \quad & \Rightarrow \quad{\dist(x,\spanVec \stdFace) \leq \frac{1}{\sigma_{\min}}\epsilon}\\	
			\dist(\stdMap(x), \stdMap(\hat \stdFace)) \leq \sigma _{\max}\psi _{\stdFace,\stdMap^\T z} (\epsilon, \norm{\stdMap x}/\sigma_{\min})  \quad &  \Leftarrow \quad \dist(x, \hat \stdFace)  \leq 
			\psi _{\stdFace,\stdMap^\T z} (\epsilon, \norm{x}),
			\end{align*}
			where $\sigma _{\max}$ is the maximum singular value of $\stdMap$. This shows that we can use $$\tilde \psi _{\stdMap(\stdFace),z}(\epsilon, \norm{\stdMap x}) = \sigma _{\max}\psi (\epsilon, \norm{\stdMap x}/\sigma _{\min})$$ as a {\FRF} for $\stdMap(\stdFace)$ and $z$.

		\end{enumerate}

\small{
	\section*{Acknowledgements}
	We thank the editors and four referees for their  insightful comments, which helped to improve the paper substantially. In particular, the discussion on tangentially exposed cones and subtransversality was suggested by Referee~1. Also, comments from Referees~1 and 2 motivated Remark~\ref{rem:const}.
	We would like to thank Prof.~G\'abor Pataki for helpful advice and for suggesting that we take a look at projectionally exposed cones. Incidentally, this was also suggested by Referee~4. Referee~4 also suggested the remark on the tightness of the error bound for doubly nonnegative matrices.	
	Feedback and encouragement from Prof.~Tomonari Kitahara, Prof.~Masakazu Muramatsu and Prof.~Takashi Tsuchiya were  highly helpful and
	they  provided the official translation of ``amenable cone'' to Japanese: \includegraphics[scale=0.8]{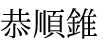} (\textit{kyoujunsui}).
	This work was partially supported by the Grant-in-Aid for Scientific Research (B) (18H03206) and  the Grant-in-Aid for Young Scientists (19K20217) from Japan Society for the Promotion of Science.
}
\bibliographystyle{abbrvurl}
\bibliography{journal-titles,references,bib_plain}
\end{document}